\documentclass{amsart}

\usepackage{amsfonts}
\usepackage{amsmath}
\usepackage{amsthm}
\usepackage{amssymb}
\usepackage{ifsym}
\usepackage{dirtytalk}

\theoremstyle{definition}

\theoremstyle{remark}

\numberwithin{equation}{section}



\newcommand\blfootnote[1]{%
  \begingroup
  \renewcommand\thefootnote{}\footnote{#1}%
  \addtocounter{footnote}{-1}%
  \endgroup
}

\begin{document}

\title[The Cube Property for the Class of Linear Orders]{Every Linear Order Isomorphic to its Cube is Isomorphic to its Square}
\author{Garrett Ervin}

\begin{abstract}
In 1958, Sierpi\'nski asked whether there exists a linear order $X$ that is isomorphic to its lexicographically ordered cube but is not isomorphic to its square. The main result of this paper is that the answer is negative. More generally, if $X$ is isomorphic to any one of its finite powers $X^n$, $n>1$, it is isomorphic to all of them. 
\end{abstract}

\maketitle

\section{Introduction}

\blfootnote{\emph{Keywords:} linear order, lexicographical product, cube problem, cube property.}
\blfootnote{I am indebted to Justin Moore for carefully reading an earlier version of this paper and offering a number of useful suggestions.}

Suppose that ($\mathfrak{C}, \times)$ is a class of structures equipped with a product. In many cases it is possible to find an infinite structure $X \in \mathfrak{C}$ that is isomorphic to its own square. If $X^2 \cong X$, then by multiplying again we have $X^3 \cong X$. Determining whether the converse holds for a given class $\mathfrak{C}$, that is, whether $X^3 \cong X$ implies $X^2 \cong X$ for all $X \in \mathfrak{C}$, is called the \emph{cube problem} for $\mathfrak{C}$. If the cube problem for $\mathfrak{C}$ has a positive answer, then $\mathfrak{C}$ is said to have the \emph{cube property}. 

The cube problem is related to three other basic problems concerning the multiplication of structures in a given class $(\mathfrak{C}, \times)$. 
\begin{enumerate}
\item[1.] Does $A \times Y \cong X$ and $B \times X \cong Y$ imply $X \cong Y$ for all $A, B, X, Y \in \mathfrak{C}$? Equivalently, does $A \times B \times X \cong X$ imply $B \times X \cong X$ for all $A, B, X \in \mathfrak{C}$?
\item[2.] Does $X^2 \cong Y^2$ imply $X \cong Y$ for all $X, Y \in \mathfrak{C}$?
\item[3.] Does $A \times Y \cong X$ and $A \times X \cong Y$ imply $X \cong Y$ for all $A, X, Y \in \mathfrak{C}$? Equivalently, does $A^2 \times X \cong X$ imply $A \times X \cong X$ for all $A, X \in \mathfrak{C}$?
\end{enumerate}
The first question is called the \emph{Schroeder-Bernstein problem} for $\mathfrak{C}$, and if it has a positive answer, then $\mathfrak{C}$ is said to have the \emph{Schroeder-Bernstein property}. The second question is called the \emph{unique square root problem} for $\mathfrak{C}$, and if its answer is positive, then $\mathfrak{C}$ has the \emph{unique square root property}. Taken together, the first two questions are sometimes called the \emph{Kaplansky test problems}, after Irving Kaplansky who posed them in \cite{Kaplansky} as a heuristic test for whether a given class of abelian groups (under the direct product) has a satisfactory structure theory (``I believe their defeat is convincing evidence that no reasonable invariants exist"). Tarski \cite{Tarski} had posed them previously for the class of Boolean algebras. All three questions are listed in Hanf's seminal paper \cite{Hanf} on products of Boolean algebras. We will refer to Question 3 as the \emph{weak Schroeder-Bernstein problem} for $\mathfrak{C}$, and the corresponding property as the \emph{weak Schroeder-Bernstein property}.

A negative solution to Question 3 obviously gives a negative solution to Question 1. If the product for $\mathfrak{C}$ is commutative, it gives a negative solution to Question 2 as well. If the cube problem for $\mathfrak{C}$ has a negative solution, that is, if there is an $X \in \mathfrak{C}$ that is isomorphic to its cube but not to its square, then all three questions have a negative solution, without assuming commutativity of the product. In practice, it is often by constructing such an $X$ that these three problems are solved. 

If the class $\mathfrak{C}$ does not contain any infinite structure isomorphic to its cube, then the cube property holds trivially. When the cube property does not hold trivially, it usually fails. The first result in this direction is due to Hanf, who constructed in \cite{Hanf} a Boolean algebra that is isomorphic to its cube but not its square. Tarski \cite{Tarski2} and J\'onsson \cite{Jonsson} immediately adapted Hanf's result to show the failure of the cube property for the class of semigroups, the class of groups, the class of rings, and various other classes of algebraic structures. Hanf's example, and consequently many of those produced by Tarski and J\'onsson, is of size continuum, and for some time it was open whether there were countable examples witnessing the failure of the cube property for these various classes. 

In 1965, Corner showed in \cite{Corner} that indeed there exists a countable (torsion-free, abelian) group $G$ isomorphic to $G^3$ but not $G^2$. Later, Jones \cite{Jones} showed that it is even possible construct a finitely generated (necessarily non-abelian) group isomorphic to its cube but not its square. In 1979, Ketonen \cite{Ketonen} solved the so-called Tarski cube problem by producing a countable Boolean algebra isomorphic to its cube but not its square. 

Throughout the 1970s and 1980s, Trnkov\'a solved the cube problem negatively for many different classes of topological spaces and relational structures, including the class of graphs under several different notions of graph product \cite{TrnkGraph}. Her topological results are summarized in \cite{Trnk3}. Answering a question of Trnkov\'a, Orsatti and Rodino showed in \cite{Orsatti} that there is even a \emph{connected} topological space homeomorphic to its cube but not its square. Koubek, Ne\v set\v ril, and R\"odl \cite{Koubek} showed that the cube property fails for the class of partial orders, as well as for other classes of relational structures. More recently, Eklof and Shelah \cite{EkShel} constructed an $\aleph_1$-separable group isomorphic to its cube but not its square, and Gowers \cite{Gowers} constructed a Banach space linearly homeomorphic to its cube but not its square. 

On the other hand, there are rare instances when the cube property holds nontrivially. It holds for the class of sets under the cartesian product, since any set in bijective correspondence with its cube is either infinite, empty, or a singleton, and hence in bijection with its square. This is immediate if one assumes the axiom of choice, but it can be proved without the axiom of choice using the Schroeder-Bernstein theorem. Similarly easily, the cube property holds for the class of vector spaces (over a fixed field) under the direct product. Less trivially, the cube property holds for the class of countably complete Boolean algebras. This follows from the Schroeder-Bernstein theorem for such algebras. Trnkov\'a \cite{Trnk2} showed that the cube property also holds for the class of countable metric spaces (where isomorphism means homeomorphism), as well as for closed subspaces of Cantor space \cite{Trnk5}. Koubek, Ne\v set\v ril, and R\"odl showed in \cite{Koubek} that the cube property holds for the class of equivalence relations. It is worth noting that for all of these classes, it is actually possible to establish the stronger Schroeder-Bernstein property. 

In his 1958 book \emph{Cardinal and Ordinal Numbers} \cite{Sierpinski}, Sierpi\'nski posed the cube problem (although he does not use the term) for the class $(LO, \times_{lex})$ of linear orders under the lexicographical product. On page 232, he writes,
\begin{center}``We do not know so far $\ldots$ any type $\alpha$ such that $\alpha = \alpha^3 \neq \alpha^2$." \end{center}
Here, ``type" means linear order type, and the ordering on the cartesian powers $\alpha^2$ and $\alpha^3$ is the lexicographical ordering\footnote{Sierpi\'nski actually ordered these powers anti-lexicographically, though in this paper we will use the lexicographical ordering. This does not change the problem.}. Although the cube problem has been solved for many other classes of structures, Sierpi\'nski's question has remained open. One major difference in this version of the cube problem is that, unlike the products for the other classes so far discussed, the lexicographical product of linear orders is not commutative. Though he does not make a conjecture in his book, his language suggests that Sierpi\'nski expected that such an $\alpha$ exists, that is, that the cube property fails for $(LO, \times_{lex})$. He was already aware of examples of linear orders witnessing the failure of the unique square root property and (the right-sided and left-sided versions of) the Schroeder-Bernstein property.

The main result of this paper is that in fact the cube property holds for $(LO, \times_{lex})$. This appears as Theorem \ref{mainthm} below.

\theoremstyle{definition}
\newtheorem*{mt}{Main Theorem}
\begin{mt}
If $X$ is a linear order and $X^3 \cong X$, then $X^2 \cong X$. More generally, if $X^n \cong X$ for some $n > 1$, then $X^2 \cong X$.
\end{mt}

Thus the cube property holds for the class of linear orders despite the fact that the Schroeder-Bernstein property and unique square root property fail. We will show in Section 6 that even the weak Schroeder-Bernstein property fails for $(LO, \times_{lex})$. In this sense, the cube property is closer to failing for $(LO, \times_{lex})$ than it is for the other classes for which it is known to hold. 

In the remainder of this paper we prove the main theorem as well as some related results. In Section 2, we define the necessary notation and terminology, and give some examples of countable linear orders isomorphic to their own squares. We also give the easy proof that the cube property holds for linear orders with both a top and bottom point. In Section 3, we prove a representation theorem for linear orders $X$ that are invariant under left multiplication by a fixed order $A$, that is, that satisfy the isomorphism $A \times X \cong X$. It turns out that such orders can essentially be represented as unions of tail-equivalence classes in the lexicographically ordered tree $A^{\omega}$. This theorem can be generalized to characterize the isomorphism $A^n \times X \cong X$ for any $n\geq1$, and in particular $A^2 \times X \cong X$. It is then possible to write down a sufficient condition, namely the existence of a \emph{parity-reversing automorphism} of $A^{\omega}$, for the implication $A^2 \times X \cong X \implies A \times X \cong X$ to hold for every $X$. 

The combinatorial heart of the proof is contained in Section 4, where parity-reversing automorphisms of $A^{\omega}$ are constructed for various orders $A$, including all countable orders. These maps are built using generalized versions of the classical Schroeder-Bernstein bijection. 

In this context, the isomorphism $X^3 \cong X$ can be rewritten $X^2 \times X \cong X$. The results of Sections 3 and 4 then give that if $X^{\omega}$ has a parity-reversing automorphism, then we must also have $X \times X \cong X$ (i.e. $X^2 \cong X$). We will show at the end of Section 4 that if $X^3 \cong X$, then indeed $X^{\omega}$ has a parity-reversing automorphism, completing the proof of the cube property. The proof is then generalized to give $X^n \cong X \implies X^2 \cong X$ for any linear order $X$ and $n > 1$. 

In Section 5, we show that the main theorem is not vacuous by illustrating a general way of constructing orders $X$ such that $X^n \cong X$ for a fixed $n>1$. Such orders can be arranged to be of any cardinality. In Section 6 we will show that there exists an order $A$ such that $A^{\omega}$ does not have a parity-reversing automorphism, and as a consequence we will be able to construct a counterexample to the weak Schroeder-Bernstein property for $(LO, \times_{lex})$. In Section 7, several related problems concerning the multiplication of linear orders are discussed. 

The paper is for the most part self-contained. General background on linear orders can be found in \cite{Rosenstein}. 

\section{Preliminaries}

\subsection{Terminology}
A \emph{linear order} is a pair $(X, <_X)$ where $X$ is a set and $<_X$ is a binary relation that totally orders $X$. We will always refer to linear orders by their underlying sets, and write $<$ without any subscript. Throughout the paper, ``order" always means linear order, and ``isomorphism" means order isomorphism. 

Given a linear order $X$, a subset $I \subseteq X$ is called an \emph{interval} if for all points $x, y, z \in X$, if $x < y < z$ and $x, z \in I$, then $y \in I$. Every singleton is an interval, as is $X$ itself. Given points $x, y \in X$ with $x < y$, the interval notation $(x, y)$, $[x, y)$, $(x, y]$, and $[x, y]$ has its usual meaning. If $I$ and $J$ are intervals in $X$, and for all $x \in I$ and $y \in J$ we have $x<y$, then $I$ lies to the left of $J$ and we write $I<J$. An interval $I$ is called an \emph{initial segment} of $X$ if whenever $x \in I$ and $y< x$ then $y \in I$. An interval $J$ is called a \emph{final segment} of $X$ if $J$ is the complement of an initial segment of $X$. 

An order (or interval) may have endpoints. The terms minimal element, left endpoint, and bottom point will be used interchangeably, as will maximal element, right endpoint, and top point. 

An order $X$ is \emph{dense} if between any two distinct points in $X$ one may find a third that lies strictly between them. A subset $D \subseteq X$ is \emph{dense in $X$} if for any two points in $X$, either one of them lies in $D$ or there exists a point between them that lies in $D$. An order $X$ is \emph{complete} if every bounded monotonic sequence in $X$ converges to a point in $X$. 

For $X$ a linear order, $X^*$ denotes the reverse order. That is, $X$ and $X^*$ share the same underlying set of points, but $x<y$ in $X$ if and only if $y<x$ in $X^*$. To every function $f: X \rightarrow X$, there is a corresponding function on $X^*$, denoted $f^*$, which acts identically to $f$ on the underlying set of points shared by $X$ and $X^*$. If $f$ is an order automorphism of $X$, then $f^*$ is an order automorphism of $X^*$. 

If $X$ and $Y$ are linear orders, the \emph{lexicographical product} $X \times Y$ is the order obtained by lexicographically ordering the cartesian product of $X$ and $Y$. That is, $X \times Y = \{(x, y): x \in X, y \in Y\}$ ordered by the rule $(x_0, y_0) < (x_1, y_1)$ if and only if $x_0 < x_1$ (in $X$), or $x_0 = x_1$ and $y_0 < y_1$ (in $Y$). We will usually suppress the product symbol, writing $XY$ instead of $X \times Y$. 

Visually, $XY$ is the order obtained by replacing every point in $X$ with a copy of $Y$. Every point $x \in X$ determines an interval of points in $XY$ of order type $Y$, namely the set of pairs $(x, \cdot)$ with left entry $x$. One may also visualize $XY$ as a tree with two ordered levels. The first level has $X$-many nodes, and each of these has $Y$-many descendants. The order type of the terminal nodes is $XY$.

The lexicographical product is associative, in the sense that $(X \times Y) \times Z$ is isomorphic to $X \times (Y \times Z)$ for all orders $X, Y, Z$. But it is not commutative. For example, let $\mathbb{Z}$ be the integers in their usual order, and let $2$ be the unique linear order with two elements. Then $\mathbb{Z}2$ is isomorphic to $\mathbb{Z}$, but $2\mathbb{Z}$ is not, as the latter order contains a bounded infinite increasing sequence, whereas every infinite increasing sequence in $\mathbb{Z}$ is unbounded. 

Since the lexicographical product is associative, we identify the $n$-length product $X_0 X_1\ldots X_{n-1}$ with the set of $n$-tuples $\{(x_0, x_1, \ldots, x_{n-1}): x_i \in X_i, i < n\}$ ordered lexicographically. If $X_i = X$ for all $i < n$, this order is abbreviated as $X^n$. We also define the $\omega$-length product $X_0X_1\ldots$ as the set of sequences $\{(x_0, x_1, \ldots): x_i \in X_i, i \in \omega\}$ ordered lexicographically. If $X_i = X$ for all $i \in \omega$, this order is abbreviated as $X^{\omega}$. One may think of $X^n$ as a tree with $n$ levels and $X^{\omega}$ as a tree with $\omega$-many levels, but since in the latter case there are no terminal nodes, one must lexicographically order the branches of the tree to recover the order.

The replacement operation will also play an important role. Given an order $X$, and for every point $x \in X$ an order $I_x$, define the \emph{replacement of $X$ by $\{I_x: x \in X\}$} to be the order obtained by replacing every point $x \in X$ with $I_x$. We denote\footnote{The order $X(I_x)$ is sometimes called an \emph{ordered sum} and denoted $\sum_{x\in X} I_x$. However, in this paper we deal with cases in which $X(I_x)$ behaves more like a product-by-$X$ than a sum-over-$X$, so this notation is more convenient.} this order by $X(I_x)$. The underlying set of points in $X(I_x)$ is $\{(x, i): x\in X, i\in I_x\}$.  We will also refer to an order of the form $X(I_x)$ simply as a \emph{replacement}. We allow that for a given $x_0 \in X$ we have $I_{x_0} = \emptyset$, and in forming $X(I_x)$ think of replacing $x_0$ with a gap. The notion of a replacement generalizes the notion of a product, since if $I_x = Y$ for every $x$, then $X(I_x) = XY$. 

It will be useful to see in what sense the replacement operation is ``associative" on the left in its relation to the lexicographical product. Given a replacement $X(I_x)$ and some order $A$, we can multiply to form the product $A \times X(I_x)$. Points in this order are tuples of the form $(a, (x, i))$. Since this order is ``$A$-many copies of $X(I_x)$," each $I_x$ appears in it $A$-many times: every interval consisting of points of the form $(a, (x, \cdot))$ will be a copy of $I_x$, regardless of $a$. Alternatively, we might have begun with $A$, formed $A \times X$, and then replaced each point $(a, x)$ with $I_x$ to form $(A \times X)(I_x)$. Points in this order are of the form $((a, x), i)$. Every interval of points $((a, x), \cdot)$ is isomorphic to $I_x$ regardless of $a$, and hence this order is naturally isomorphic to $A \times X(I_x)$ under the map $((a, x), i) \mapsto (a, (x, i))$. 

However, to be consistent with our previous notation, the subscripts in the replacement $(A \times X)(I_x)$ should not (as they are written) range over $X$, but rather over $A \times X$. In forming the order the second way, by replacing after taking the product, we should have labeled the order replacing the point $(a, x)$ as $J_{(a, x)}$ and used the notation $(A \times X)(J_{(a, x)})$. If, as in our example, $J_{(a, x)} = I_x$ for every $a$, then $(A \times X)(J_{(a, x)})$ is isomorphic to $A \times X(I_x)$, as noted. 

We will use the following convention. If we first form the replacement $X(I_x)$ and then multiply on the left by $A$, we will write $AX(I_x)$. If we first form the product $AX$ and then replace each point $(a, x)$ with some $J_{(a, x)}$, we will use the notation $AX(J_{(a, x)})$.  It will sometimes be convenient to think of $AX(I_x)$ as being formed in the second way, with the left multiplication taking place first, in which case we will switch the notation to $AX(J_{(a, x)})$ and make clear that $J_{(a, x)}=I_x$ for all $a \in A$. 

Finally, let us note how isomorphisms factor through products and replacements. If $X$ and $Y$ are isomorphic orders, as witnessed by an isomorphism $f: X \rightarrow Y$, then for any order $Z$, the products $XZ$ and $YZ$ are isomorphic, as witnessed by the map $(x, z) \mapsto (f(x), z)$. Similarly, if we have collections of orders $I_x$, $x \in X$, and $J_y$, $y \in Y$ so that $I_x = J_{f(x)}$ for all $x \in X$, we will also have that the replacements $X(I_x)$ and $Y(J_y)$ are isomorphic, as witnessed by the map $(x, i) \mapsto (f(x), i)$. 

\subsection{Examples of countable orders isomorphic to their squares}

To begin, we give some examples of countable orders that are isomorphic to their own squares. In Section 5 we will construct uncountable examples. 

Our examples here rely on Cantor's theorem characterizing the order types of countable dense linear orders, as well as a generalization due to Skolem. 

\theoremstyle{definition}
\newtheorem*{Cantor}{Theorem}
\begin{Cantor}
(Cantor) If $X$ and $Y$ are countable dense linear orders without endpoints, then $X$ is isomorphic to $Y$. 
\end{Cantor}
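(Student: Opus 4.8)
The plan is to use the back-and-forth method, constructing the isomorphism as the union of an increasing chain of finite partial isomorphisms. Since $X$ and $Y$ are countable, fix enumerations $X = \{x_0, x_1, x_2, \ldots\}$ and $Y = \{y_0, y_1, y_2, \ldots\}$ (not necessarily respecting the orders). I would build finite partial maps $f_0 \subseteq f_1 \subseteq f_2 \subseteq \cdots$, each an order isomorphism from a finite subset of $X$ onto a finite subset of $Y$, with the bookkeeping that after stage $2n$ the point $x_n$ lies in $\mathrm{dom}(f_{2n})$, and after stage $2n+1$ the point $y_n$ lies in $\mathrm{ran}(f_{2n+1})$. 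Starting from $f_0 = \emptyset$, the union $f = \bigcup_n f_n$ is then a well-defined, order-preserving bijection $X \to Y$: it is defined on all of $X$ by the even-stage requirement, surjective by the odd-stage requirement, injective and order-preserving because each $f_n$ is.

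The heart of the argument is the extension step, which I would isolate as a claim: if $g$ is a finite partial isomorphism and $x \in X \setminus \mathrm{dom}(g)$, then $g$ extends to a finite partial isomorphism $g'$ with $x \in \mathrm{dom}(g')$ (and symmetrically for a point $y \in Y \setminus \mathrm{ran}(g)$). To prove this, write $\mathrm{dom}(g) = \{a_1 < a_2 < \cdots < a_k\}$, so $\mathrm{ran}(g) = \{g(a_1) < \cdots < g(a_k)\}$. The new point $x$ falls into exactly one of the regions determined by the $a_i$: either $x < a_1$, or $a_i < x < a_{i+1}$ for some $i$, or $a_k < x$. In the first case, since $Y$ has no least element, pick any $y < g(a_1)$; in the last case, since $Y$ has no greatest element, pick any $y > g(a_k)$; in the middle case, since $Y$ is dense, pick any $y$ with $g(a_i) < y < g(a_{i+1})$. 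Then $g' = g \cup \{(x, y)\}$ is the required extension. (If $k = 0$, i.e. $g = \emptyset$, any $y \in Y$ works, using that $Y$ is nonempty.) The back direction is identical with the roles of $X$ and $Y$ swapped, using that $X$ is dense without endpoints.

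The main obstacle — really the only nontrivial point — is making sure the extension step genuinely uses every hypothesis and that the bookkeeping is set up so both "forth" (covering all of $X$) and "back" (covering all of $Y$) are achieved; a one-sided construction would only give an order embedding. Density is exactly what is needed to slot a point into a bounded gap, and the absence of endpoints is exactly what is needed to slot a point below everything or above everything; dropping either hypothesis breaks the claim (e.g. $\omega$ versus $\omega + 1$, or $\mathbb{Z}$ versus $\mathbb{Q}$). Once the claim is in hand, the recursive construction and the verification that $\bigcup_n f_n$ is an isomorphism are routine. I would also remark that the same scheme, with the enumeration of $Y$ replaced by a prescribed target, shows more generally that any finite partial isomorphism between countable dense orders without endpoints extends to a full isomorphism, and that any isomorphism between finite subsets respecting order extends — a fact worth having on record for the examples that follow.
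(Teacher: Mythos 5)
Your proof is correct and is exactly the standard back-and-forth argument that the paper takes for granted (it states Cantor's theorem without proof and later refers to ``the usual back-and-forth proof''). The extension step, the bookkeeping ensuring both directions, and the role of density and the absence of endpoints are all handled properly.
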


Thus every countable dense linear order without endpoints is isomorphic to the set of rationals $\mathbb{Q}$ in their usual order. 

\theoremstyle{definition}
\newtheorem*{Skolem}{Theorem}
\begin{Skolem}
(Skolem) Fix some $k$, $1 \leq k \leq \omega$. Let $X, Y$ be countable dense linear orders without endpoints. Fix a partition $X = \bigcup_{i<k} X_i$ such that each $X_i$ is dense in $X$, and similarly $Y = \bigcup_{i<k} Y_i$. There is an isomorphism $f: X \rightarrow Y$ such that $f[X_i] = Y_i$ for every $i < k$. 
\end{Skolem}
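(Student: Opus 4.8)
The plan is to run the standard back-and-forth argument that proves Cantor's theorem, but carrying along the extra bookkeeping of the two partitions. Fix enumerations $X = \{x_n : n \in \omega\}$ and $Y = \{y_n : n \in \omega\}$. For $x \in X$ let $c(x)$ be the unique $i < k$ with $x \in X_i$, and define $c$ on $Y$ similarly; call $c(x)$ the \emph{color} of $x$. We build an increasing chain $f_0 \subseteq f_1 \subseteq \cdots$ of finite partial functions $X \rightharpoonup Y$, starting from $f_0 = \emptyset$, such that every $f_n$ is order-preserving, injective, and \emph{color-preserving}, meaning $c(f_n(x)) = c(x)$ for all $x \in \mathrm{dom}(f_n)$.

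At an even stage $n = 2m$, if $x_m \in \mathrm{dom}(f_n)$ we do nothing; otherwise let $i = c(x_m)$. The finitely many points of $\mathrm{dom}(f_n)$ cut $X$ into finitely many intervals, one of which contains $x_m$; its $f_n$-image is a corresponding interval $I$ of $Y$ (a bounded open interval with endpoints in $\mathrm{ran}(f_n)$, an initial segment, a final segment, or all of $Y$). Because $Y_i$ is dense in $Y$, the interval $I$ contains a point $y$ of $Y_i$; we set $f_{n+1} = f_n \cup \{(x_m, y)\}$, which is still order-preserving by the choice of $I$ and still color-preserving since $c(y) = i = c(x_m)$. At an odd stage $n = 2m+1$ we do the symmetric thing on the $Y$-side, ensuring $y_m \in \mathrm{ran}(f_{n+1})$ by locating, inside the relevant interval of $X$, a point of $X_{c(y_m)}$, which exists because $X_{c(y_m)}$ is dense in $X$. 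Then $f = \bigcup_n f_n$ is order-preserving, injective, and color-preserving (all three properties pass to unions of chains), the forth steps give $\mathrm{dom}(f) = X$ and the back steps give $\mathrm{ran}(f) = Y$, so $f$ is an isomorphism. From $c(f(x)) = c(x)$ for every $x$ we get $f[X_i] \subseteq Y_i$, and applying the same to $f^{-1}$ gives $f^{-1}[Y_i] \subseteq X_i$; hence $f[X_i] = Y_i$ for all $i < k$.

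The only place the hypotheses genuinely enter, and the one point needing a moment's care, is the claim that each interval $I$ above contains a point of the prescribed color. When $I$ is a bounded open interval $(y', y'')$, the definition of ``$Y_i$ dense in $Y$'' applied to the pair $y' < y''$ does not by itself place a point of $Y_i$ strictly between them, so one first uses that $Y$ is a dense order to find $y' < y_1 < y_2 < y''$ and then applies density of $Y_i$ to $y_1 < y_2$. When $I$ is an initial or final segment, the absence of endpoints in $Y$ guarantees $I$ has at least two points and the same reduction applies; when $I = Y$ the claim is immediate. (Also, since $Y$ has more than one point, ``$Y_i$ dense in $Y$'' forces each $Y_i$ to be nonempty, so no degenerate case arises; note $k = \omega$ causes no difficulty, as the construction enumerates points, not colors.) Everything else is the routine verification that a chain of finite partial isomorphisms unions to an isomorphism.
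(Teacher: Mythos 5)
Your proof is correct and is exactly the argument the paper has in mind: the paper states Skolem's theorem without proof, remarking only that it is ``an easy generalization of the usual back-and-forth proof of Cantor's theorem,'' which is precisely the color-preserving back-and-forth you carry out. Your extra care about the paper's weak formulation of ``dense in $X$'' (first interpolating two points, then applying density of $Y_i$ to them) is a correct and worthwhile detail.
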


This says that if we have two copies of the rationals, and color each of them with the same $k$ colors, using every color densely often, then there is an isomorphism between the two orders that respects the colorings. The proof is an easy generalization of the usual back-and-forth proof of Cantor's theorem. 

Now, if $L$ is any countable order, then $L\mathbb{Q}$ is countable, dense, and without endpoints. Hence, $L\mathbb{Q} \cong \mathbb{Q}$. In particular, $\mathbb{Q}^2 \cong \mathbb{Q}$, yielding our first example of an order isomorphic to its square.  To get another, let $X = \mathbb{Q}2$. This order is not isomorphic to $\mathbb{Q}$, since it contains many intervals isomorphic to $2$, and thus is not dense. Yet if $L$ is any countable order, then $LX = L(\mathbb{Q}2) \cong (L\mathbb{Q})2 \cong \mathbb{Q}2 = X$. In particular $X^2 \cong X$. 

Using Skolem's theorem, we generalize these examples. Fix $k$, $1 \leq k \leq \omega$, and a partition $\mathbb{Q} = \bigcup_{i<k} \mathbb{Q}_i$ such that each $\mathbb{Q}_i$ is dense in $\mathbb{Q}$. For each $i$, fix some countable order $I_i$, and form the replacement $X = \mathbb{Q}(I_q)$, where if $q \in \mathbb{Q}_i$ then $I_q = I_i$. This is the order obtained by replacing each point in the rationals with one of the $k$ countable orders $I_i$, such that each order appears densely often. 

Let $L$ be any countable order. Then $L\mathbb{Q}(I_q) \cong \mathbb{Q}(I_q)$, i.e. $LX \cong X$. The isomorphism follows from Skolem's theorem: $L\mathbb{Q}(I_q)$ is also a countable dense shuffling of the $I_i$, which is the same form as $\mathbb{Q}(I_q)$. 

To see this explicitly, let us write $L\mathbb{Q}(I_q)$ as $L\mathbb{Q}(J_{(l, q)})$, where $J_{(l, q)} = I_q$ for all $l \in L$. Thus $J_{(l, q)} = I_i$ if $q \in \mathbb{Q}_i$. We partition $L\mathbb{Q}$ according to the partition of $\mathbb{Q}$: let $Q_i = \{(l, q) \in L\mathbb{Q}: q \in \mathbb{Q}_i\}$. Then the $Q_i$ partition $L\mathbb{Q}$ and we have $J_{(l, q)} = I_i$ if $(l, q) \in Q_i$. Since each $Q_i$ is dense in $L\mathbb{Q}$, there is an isomorphism $f: L\mathbb{Q} \rightarrow \mathbb{Q}$ such that $f[Q_i] = \mathbb{Q}_i$ for every $i$. But then $J_{(l,q)} = I_{f((l, q))}$ for every $(l, q) \in L\mathbb{Q}$. Thus the isomorphism lifts to give an isomorphism of $L\mathbb{Q}(J_{(l, q)})$ with $\mathbb{Q}(I_q)$, i.e. of $LX$ with $X$. Since $L$ was arbitrary, we have in particular that $X^2 \cong X$. 

In Section 3 it will be shown that if $X$ is a countable order without endpoints and $X^n \cong X$ for some $n > 1$, then $X$ has the same form as the order above: $X \cong \mathbb{Q}(I_q)$, where for each $q$ there is a dense set of $p \in \mathbb{Q}$ such that $I_q = I_p$. It follows, as above, that $X^2 \cong X$ and hence $X^m \cong X$ for any $m > 1$. Of course, our main theorem is that $X^n \cong X \implies X^2 \cong X$ holds in general, but in the countable, no-endpoints case we have a complete classification: $X^n \cong X$ for some $n > 1$ if and only if $X \cong \mathbb{Q}(I_q)$. As we observed, such an order is not only invariant under left multiplication by itself, but by any countable order $L$. 

\subsection{A solution to the cube problem when $X$ has both endpoints}

It turns out that it is easy to prove $X^n \cong X \implies X^2 \cong X$ for linear orders $X$ with both a left and right endpoint. The proof uses the following theorem of Lindenbaum, that could be called the Schroeder-Bernstein theorem for linear orders. Although the proof for the general case is substantially harder, a generalized version of Lindenbaum's theorem does play an important role. 

\theoremstyle{definition}
\newtheorem*{Lindenbaum}{Theorem}
\begin{Lindenbaum}
(Lindenbaum) Suppose $X, Y$ are linear orders. If $X$ is isomorphic to an initial segment of $Y$, and $Y$ is isomorphic to a final segment of $X$, then $X \cong Y$. 
\end{Lindenbaum}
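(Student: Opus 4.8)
The plan is to reduce the statement to a single self-similarity of $X$ and then absorb a tail on the right-hand end. Fix an isomorphism $f\colon X\to Y$ onto an initial segment $I=f[X]$ of $Y$, and an isomorphism $g\colon Y\to X$ onto a final segment $F=g[Y]$ of $X$. Put $B=Y\setminus I$, a final segment of $Y$, and $C=X\setminus F$, an initial segment of $X$. Since $I\cong X$ is an initial segment of $Y$, we have $Y\cong X+B$ (ordered sum); since $F\cong Y$ is a final segment of $X$, we have $X\cong C+Y$. Combining these, $X\cong C+X+B$. Hence it suffices to prove that $X\cong X+B$, for then $X\cong X+B\cong Y$.

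To make the self-similarity explicit, consider $\psi=g\circ f\colon X\to X$; this is an isomorphism of $X$ onto $\psi[X]=g[I]$. Because $g$ carries the decomposition $Y=I+B$ into consecutive intervals to $F=g[I]+g[B]$, and $X=C+F$, we may write $X$ as the concatenation $X=C+\psi[X]+B'$ of consecutive intervals, where $\psi[X]\cong X$, where $C$ is as above, and where $B'=g[B]\cong B$. Now iterate: set $X_0=X$ and $X_{n+1}=\psi[X_n]$, so that $X_n=C_n+X_{n+1}+B_n$ where $C_n=\psi^n[C]\cong C$, where $B_n=\psi^n[B']\cong B$, and where $C_n<X_{n+1}<B_n$ inside $X_n$. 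The intervals $X_0\supseteq X_1\supseteq\cdots$ are nested; set $M=\bigcap_n X_n$. A routine check shows that the $C_n$ increase, the $B_n$ decrease, every point of $X$ outside $M$ lies in exactly one $C_n$ or one $B_n$, and the left-to-right order of the pieces is $C_0<C_1<\cdots<M<\cdots<B_1<B_0$. Consequently $X\cong\omega C+M+\omega^*B$.

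For the last step, set $D=\omega C+M$, so that $X\cong D+\omega^*B$, and compute
\[
X+B\ \cong\ D+\omega^*B+B\ \cong\ D+(\omega^*+1)B\ \cong\ D+\omega^*B\ \cong\ X,
\]
where the middle isomorphism is distributivity of the lexicographical product over ordered sums and the essential isomorphism is $\omega^*+1\cong\omega^*$ (the reverse of the obvious $1+\omega\cong\omega$). Therefore $Y\cong X+B\cong X$, as desired.

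I expect the only point needing care to be the transfinite bookkeeping in the second paragraph: verifying that the accumulating initial pieces $C_n$, the residual core $M=\bigcap_n X_n$, and the accumulating final pieces $B_n$ really do partition $X$ into consecutive intervals with order type $\omega C+M+\omega^*B$. The reduction to $X\cong C+X+B$ is immediate, and the final absorption is short once one notices that $\omega^*+1\cong\omega^*$; it is exactly this asymmetry between initial and final segments that lets Lindenbaum's theorem hold even though the full Schroeder-Bernstein property for $(LO,\times_{lex})$ fails.
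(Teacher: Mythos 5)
Your proof is correct. The reduction to $X\cong X+B$, the interval decomposition $X=C_n$'s $+\,M\,+$ $B_n$'s obtained by iterating $\psi=g\circ f$, and the final absorption via $\omega^*+1\cong\omega^*$ all check out; in particular the bookkeeping you flag as delicate does work, since $C_n, B_n\subseteq X_n\setminus X_{n+1}$ forces the pieces to be pairwise disjoint and to exhaust $X\setminus M$ in the order $C_0<C_1<\cdots<M<\cdots<B_1<B_0$. The paper's proof is a one-line appeal to the classical Schroeder--Bernstein construction: take the bijection $h$ built from $f$ and $g$ by the usual orbit analysis and observe that the hypotheses (initial segment one way, final segment the other) make $h$ order-preserving. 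Your argument is in substance the same partition --- $\bigcup_n C_n$, $\bigcup_n B_n$, and $M$ are exactly the ``$X$-stopping,'' ``$Y$-stopping,'' and doubly-infinite orbits of the Schroeder--Bernstein proof --- but you package it as an order-type identity $X\cong\omega C+M+\omega^* B$ rather than as a pointwise-defined bijection. What your version buys is that it makes explicit the one place where the order structure actually enters, namely the asymmetric absorption $\omega^* B+B\cong\omega^* B$ (as opposed to $B+\omega B\cong \omega B$ on the other side), which is precisely why the theorem needs ``initial'' paired with ``final'' and why the two-sided Schroeder--Bernstein property can still fail; the paper's version is shorter but leaves the verification that $h$ is order-preserving entirely to the reader.
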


\begin{proof}
Suppose $f: X \rightarrow Y$ is an isomorphism of $X$ onto an initial segment of $Y$ and $g: Y \rightarrow X$ is an isomorphism of $Y$ onto a final segment of $X$. Then $f, g$ are in particular injections. Let $h: X \rightarrow Y$ be the bijection constructed out of $f, g$ as in the classical proof of the Schroeder-Bernstein theorem. Then the hypotheses guarantee that this bijection is order-preserving. 
\end{proof}

\theoremstyle{definition}
\newtheorem{Endpoints}[theorem]{Corollary}
\begin{Endpoints}\label{endpoints}
If $X$ is a linear order with both a left endpoint and right endpoint, and $X^n \cong X$ for some $n > 1$, then $X^2 \cong X$. 
\end{Endpoints}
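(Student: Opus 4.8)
The plan is to obtain $X^2 \cong X$ from Lindenbaum's theorem applied to the pair $(X^2, X)$. So it suffices to exhibit an isomorphism of $X^2$ onto an initial segment of $X$ and an isomorphism of $X$ onto a final segment of $X^2$; Lindenbaum's theorem then yields $X^2 \cong X$ at once. Write $0$ and $1$ for the bottom and top points of $X$. We may assume $n > 2$, since for $n = 2$ the desired conclusion is exactly the hypothesis.

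For the initial-segment half, I would use the hypothesis $X^n \cong X$ together with the presence of a bottom point. By associativity of the lexicographical product, $X^n \cong X^{n-2} \times X^2$. Since $X$ has a bottom point, so does $X^{n-2}$; call it $b$. Then $\{b\} \times X^2$ is an initial segment of $X^{n-2} \times X^2$: any tuple lexicographically below one whose initial $(n-2)$-block is the minimum $b$ must itself have initial block $b$. The map $z \mapsto (b, z)$ is an isomorphism of $X^2$ onto this initial segment. Fixing an isomorphism $X^n \cong X$ and recalling that isomorphisms carry initial segments to initial segments, we conclude that $X^2$ is isomorphic to an initial segment of $X$.

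For the final-segment half, I would use only the presence of a top point. In $X^2 = X \times X$, the block $\{1\} \times X$ is a final segment --- any tuple lexicographically above one with first coordinate the maximum $1$ has first coordinate $1$ --- and $x \mapsto (1, x)$ is an isomorphism of $X$ onto it. Hence $X$ is isomorphic to a final segment of $X^2$.

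Applying Lindenbaum's theorem now gives $X^2 \cong X$, and then $X^m \cong X$ for all $m > 1$ follows by multiplying. I do not expect any genuine obstacle in this case: the entire argument consists of the two one-line segment observations above, plus the routine remark that a fixed isomorphism preserves the property of being an initial segment. The only mild care needed is with the degenerate cases ($n = 2$, or $X$ a single point), which are immediate.
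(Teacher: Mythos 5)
Your proof is correct and follows essentially the same route as the paper: both arguments rest on Lindenbaum's theorem together with the observation that a bottom point yields an initial-segment copy and a top point a final-segment copy. The only (cosmetic) difference is that you apply Lindenbaum directly to the pair $(X^2, X)$, embedding $X^2$ initially into $X \cong X^n$ via the bottom block of $X^{n-2}$, whereas the paper applies it to $(X^2, X^n)$, embedding $X \cong X^n$ initially into $X^2$ via $(0,\cdot)$ and $X^2$ finally into $X^n$ via the leading-$1$s block.
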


\begin{proof}
Denote the minimal element of $X$ by $0$ and the maximal element by $1$. Then $X^2$ contains an initial segment isomorphic to $X$, namely the interval of points of the form $(0, \cdot)$. Since $X^n \cong X$, it follows that $X^2$ contains an initial segment isomorphic to $X^n$. But the interval of consisting of points $(1, 1, \ldots, 1, \cdot, \cdot)$ with $n-2$ leading 1s is a final segment of $X^n$ isomorphic to $X^2$. By Lindenbaum's theorem, $X^2 \cong X^n$. Hence $X^2 \cong X$. 
\end{proof}

Thus any linear order with both endpoints that is isomorphic to its cube is isomorphic to its square, solving the cube problem in this case. The simplicity of the proof suggests that there may be a simple proof when $X$ has either no endpoints or one endpoint. But the proof uses crucially that $X$ has both endpoints, and there does not seem to be an adaptation to the other cases. The isomorphisms constructed in Section 4 to take care of the other cases do make use of Schroeder-Bernstein style maps, but using them requires an understanding of the structure of orders $X$ that satisfy isomorphisms of the form $A^nX \cong X$.

\section{Characterizing orders $X$ such that $A^nX \cong X$}

\subsection{The isomorphism $AX \cong X$}

Throughout the rest of the paper, when it is not otherwise specified, $A$ simply refers to some fixed order. Our first task is to understand which orders $X$ are isomorphic to $AX$. We begin with some examples. 

\theoremstyle{definition}
\newtheorem{exfull}[theorem]{Example}
\begin{exfull}\label{exfull}
Let $X = A^{\omega}$. The map from $AX$ to $X$ defined by $(a, (a_0, a_1, \ldots)) \mapsto (a, a_0, a_1, \ldots)$ is order-preserving and bijective. Thus $AX \cong X$. 
\end{exfull}

Since it will appear often, let us denote the ``flattening map" in the example by $fl: A \times A^{\omega} \rightarrow A^{\omega}$, where $fl(a, (a_0, a_1, \ldots)) = (a, a_0, a_1, \ldots)$. 

\theoremstyle{definition}
\newtheorem{exsmall}[theorem]{Example}
\begin{exsmall}\label{exsmall}
Choose a distinguished element in $A$ and label it $0$. Let $X$ be the suborder of $A^{\omega}$ consisting of sequences that are eventually $0$. Then $AX \cong X$. Again the isomorphism is given by $fl$, restricted here to $AX$. Similarly, if $X$ is the subset of $A^{\omega}$ consisting of eventually constant sequences, then $AX$ is isomorphic to $X$ under the same map. 
\end{exsmall}

These examples can be generalized. The relevant notion is that of tail-equivalence of sequences in $A^{\omega}$. In what follows, $A^{<\omega}$ denotes the set of finite sequences of elements of $A$, including the empty sequence. We do not put any order structure on this set. If $r$ is a finite sequence and $u$ is a finite or infinite sequence, then $ru$ denotes the sequence obtained by concatenating $r$ with $u$. 

\theoremstyle{definition}
\newtheorem{tailequiv}[theorem]{Definition}
\begin{tailequiv}\label{tailequiv}
Two sequences $u, v \in A^{\omega}$ are called \emph{tail-equivalent} if there exist sequences $r, s \in A^{<\omega}$ (not necessarily of the same length) and a sequence $u' \in A^{\omega}$ such that $u = ru'$ and $v = su'$. If $u$ and $v$ are tail-equivalent, we write $u \sim v$. 
\end{tailequiv}

In dealing with sequences of elements of $A$, the letters $a, b, \ldots$ will usually denote single elements of $A$, whereas $r, s, \ldots$ will denote finite sequences, and $u, v, \ldots$ will denote infinite sequences. If $u = ru'$ for some finite sequence $r$, then $u'$ is called a \emph{tail-sequence} of $u$ and $r$ is called an \emph{initial sequence} of $u$. In an abuse, no distinction is made between elements of $A$ and sequences of length 1, so that $au$ refers to the sequence with first entry $a \in A$ followed by the tail-sequence $u \in A^{\omega}$. The length of a finite sequence $r$ is denoted $|r|$. Given $u \in A^{\omega}$, $u_i$ refers to the $i$th entry of $u$, and $u \upharpoonright n = (u_0, u_1, \ldots, u_{n-1})$ denotes the initial sequence of the first $n$ entries of $u$. If $u \sim v$, so that $u = ru'$ and $v = su'$ for some $r, s, u'$, the pair $ru'$ and $su'$ is called a \emph{meeting representation} of $u$ and $v$. 

Meeting representations are not unique: if $u=ru'$, $v=su'$, and $a$ is the first entry of $u'$, so that $u'=au''$, then letting $r' = ra$, $s'=sa$ we have that $u=r'u''$ and $v=s'u''$, giving a different representation. Usually, unzipping along a tail-sequence like this is the only way of generating distinct representations. However, if $u'$ is an eventually periodic sequence there are other ways. We will return to this issue later. 

It is easy to see that tail-equivalence is an equivalence relation on $A^{\omega}$. The equivalence class of $u$ is denoted $[u]$. It consists exactly of those elements in $A^{\omega}$ of the form $ru'$, where $r \in A^{<\omega}$ and $u'$ is a tail-sequence of $u$. 

The tail-equivalence classes are the smallest subsets of $A^{\omega}$ that are ``invariant under left multiplication by $A$" in the following sense. If $[u]$ is a tail-equivalence class, then viewing it as a suborder of $A^{\omega}$, we may form the product $A[u] = \{(a, v): a \in A, v \in [u]\}$. Then $fl[A[u]] = [u]$, that is, the flattening map $(a, v) \mapsto av$ witnesses the isomorphism $A[u] \cong [u]$. To check this, note that if $v \in [u]$, then $av \sim v$ and hence $av \in [u]$ for any $a \in A$. Hence $fl[A[u]] \subseteq [u]$. And if $v \in [u]$, say $v = (v_0, v_1, \ldots)$, then $v' = (v_1, v_2, \ldots)$ is tail-equivalent to $v$ and hence also in $[u]$. But then $v_0v' = v$ is in the image of $A[u]$, giving the reverse containment. 

On the other hand, if $X \subseteq A^{\omega}$ and $fl[AX]=X$, then $X$ is a union of tail-equivalence classes. For, if $u = (u_0, u_1, \ldots)$ is in $X$, then $fl^{-1}(u)=(u_0, (u_1, \ldots))$ is in $AX$ and therefore the tail-sequence $(u_1, u_2, \ldots)$ is in $X$. By induction, any tail-sequence $u'$ of $u$ is in $X$. Then for any $a \in A$, we must have $(a, u') \in AX$ and hence $fl((a, u')) = au' \in X$. By induction, for any finite sequence $r$ we have $ru' \in X$. Thus $[u] \subseteq X$, giving the claim. We have proved the following proposition. 

\theoremstyle{definition}
\newtheorem{proptailequiv}[theorem]{Proposition}
\begin{proptailequiv}\label{proptailequiv}
Fix a suborder $X \subseteq A^{\omega}$. Then $fl[AX] = X$ if and only if $X$ is a union of tail-equivalence classes. 
\end{proptailequiv}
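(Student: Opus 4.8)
The plan is to prove the two implications separately, leaning on the explicit description of a tail-equivalence class recorded just above — that $[u] = \{ru' : r \in A^{<\omega},\ u' \text{ a tail-sequence of } u\}$ — together with the fact that the flattening map $fl : A \times A^{\omega} \to A^{\omega}$, $(a,v) \mapsto av$, is an order isomorphism, so in particular a bijection.

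For the easy direction, suppose $X$ is a union of tail-equivalence classes. To get $fl[AX] \subseteq X$, take $(a,v) \in AX$; then $v \in X$ lies in some class $[u] \subseteq X$, and since $av \sim v$ we have $fl(a,v) = av \in [u] \subseteq X$. For the reverse inclusion, take $v = (v_0, v_1, \ldots) \in X$; its tail-sequence $v' = (v_1, v_2, \ldots)$ is tail-equivalent to $v$, hence lies in the same class and so in $X$, whence $(v_0, v') \in AX$ and $fl(v_0, v') = v$. This gives $X \subseteq fl[AX]$, so $fl[AX] = X$.

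For the converse, assume $fl[AX] = X$ and fix $u \in X$; the goal is $[u] \subseteq X$. First, $fl^{-1}(u) = (u_0,(u_1,u_2,\ldots))$ lies in $AX$, which forces $(u_1,u_2,\ldots) \in X$; iterating this step shows every tail-sequence $u'$ of $u$ lies in $X$. Next, fix such a $u'$ and any $a \in A$: then $(a,u') \in AX$, so $au' = fl(a,u') \in X$; inducting on the length of the prefix, $ru' \in X$ for every $r \in A^{<\omega}$. By the displayed form of $[u]$, this yields $[u] \subseteq X$, so $X$ equals the union of the classes of its elements.

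I do not expect a genuine obstacle here: all the content is in having set up the right vocabulary (the flattening map, tail-sequences, the explicit shape of a tail-equivalence class), after which each inclusion is a short argument, with the inductive parts amounting to repeated application of a single closure step. The only point worth a moment's care is keeping the two inductions in the converse straight — first descending through the tail-sequences of $u$, then ascending by prepending arbitrary finite sequences — but neither is delicate.
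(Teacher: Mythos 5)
Your proof is correct and follows essentially the same route as the paper's: the forward direction via $av \sim v$ and the tail-sequence witness for surjectivity, and the converse via the two inductions (first closing $X$ under passing to tail-sequences using $fl^{-1}$, then under prepending finite sequences). The only cosmetic difference is that the paper verifies the forward direction for a single class $[u]$ and lets the union case follow, while you argue directly with the union.
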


This fact can be formulated dynamically. We may view $A^{<\omega}$ as the free semigroup on the alphabet $A$, where the semigroup operation is concatenation. This semigroup naturally acts on $A^{\omega}$ by concatenation on the left. The tail-equivalence classes are the smallest subsets of $A^{\omega}$ that are closed under this action by $A^{<\omega}$. 

Our aim now is to find the most general form of an order $X$ such that $AX \cong X$. Proposition \ref{proptailequiv} gives immediately that if $X$ is any suborder of $A^{\omega}$ that is a union of tail-equivalence classes, then $AX$ is isomorphic to $X$ under the flattening map $(a, u) \mapsto au$. Notice that the $X$'s appearing in Examples \ref{exfull} and \ref{exsmall} are of this form: $A^{\omega}$ is the union of all of its tail-equivalence classes, and the set of eventually $0$ sequences is exactly the single tail-equivalence class of the constant sequence $(0, 0, \ldots)$. 

The proposition actually yields substantially more general examples. Suppose that for every $u \in A^{\omega}$ we fix an order $I_u$, with the restriction that if $u \sim v$ then $I_u = I_v$. Let $X = A^{\omega}(I_u)$. Then $AX \cong X$, and the isomorphism is witnessed by the flattening isomorphism on the left two coordinates. To see this, rewrite $AX = A \times A^{\omega}(I_u)$ as $(A \times A^{\omega})(J_{(a, u)})$, where $J_{(a, u)}=I_u$ for all $a \in A$. Then $J_{(a, u)} = I_{au}$ as well, since $au \sim u$. This is the same as saying that the interval consisting of points $(a, u, \cdot)$ in $(A \times A^{\omega})(J_{(a, u)})$ is isomorphic to the interval of points $(au, \cdot)$ in $A^{\omega}(I_u)$. Thus the map $(a, u, x) \mapsto (au, x)$ makes sense, and defines an order isomorphism of $(A \times A^{\omega})(J_{(a, u)})$ with $A^{\omega}(I_u)$, that is, of $AX$ with $X$.  

Letting $I_{[u]}$ denote the single order $I_v$ for all $v \in [u]$, we may denote $X$ by $A^{\omega}(I_{[u]})$. Our previous examples were actually of this form: if $X \subseteq A^{\omega}$ is a union of tail-equivalence classes then $X$ may be written as $A^{\omega}(I_{[u]})$, where $I_{[u]} = 1$ if $u \in X$, and otherwise $I_{[u]}=\emptyset$. Of course, since the $I_{[u]}$ may be chosen arbitrarily, there are many other examples besides these. 

Here is a concrete one. Let $\mathbb{Z}$ denote the integers in their usual order, and form the product $\mathbb{Z}^{\omega}$ (this order is isomorphic to the irrationals). Each tail-equivalence class $[u] \subseteq \mathbb{Z}^{\omega}$ is a countable dense subset of $\mathbb{Z}^{\omega}$, hence the number of classes is $2^{\omega}$. Enumerate them as $\{C_{\alpha}: \alpha < 2^{\omega}\}$. Let $X$ be the order obtained by replacing every point in the $\alpha$th class with the ordinal $\alpha$, i.e. if $[u]=C_{\alpha}$ let $I_{[u]}=\alpha$, and let $X = \mathbb{Z}^{\omega}(I_{[u]})$. This order may be visualized as densely many copies of each ordinal less than $2^{\omega}$ interspersed with one another. By our observations above, $\mathbb{Z}X \cong X$. 

We shall now prove that this form is general: any order $X$ such that $AX \cong X$ must be of the form $A^{\omega}(I_{[u]})$. 

\theoremstyle{definition}
\newtheorem{thmax}[theorem]{Theorem}
\begin{thmax}\label{thmax}
Fix an order $X$. Then $AX \cong X$ if and only if $X$ is isomorphic to an order of the form $A^{\omega}(I_{[u]})$. 
\end{thmax}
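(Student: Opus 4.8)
The plan is to prove the nontrivial direction: assuming $AX \cong X$, I must produce orders $I_{[u]}$, $u \in A^\omega$, depending only on the tail-equivalence class of $u$, such that $X \cong A^\omega(I_{[u]})$. Fix an isomorphism $\varphi : AX \to X$. The key idea is to iterate $\varphi$ to build a copy of $A^\omega$ ``sitting inside'' $X$ in the right way. For each $a \in A$, the composite $X \xrightarrow{x \mapsto (a,x)} AX \xrightarrow{\varphi} X$ embeds $X$ as an interval of $X$; call this embedding $\varphi_a$, and note that as $a$ ranges over $A$ the images $\varphi_a[X]$ are pairwise disjoint intervals of $X$ that sit inside $X$ in the same order as $A$ orders $A$ (because $\varphi$ is order-preserving and respects the lexicographic order on $AX$). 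More generally, for a finite sequence $r = (a_0, \ldots, a_{n-1}) \in A^{<\omega}$ I set $\varphi_r = \varphi_{a_0} \circ \varphi_{a_1} \circ \cdots \circ \varphi_{a_{n-1}} : X \to X$, an embedding of $X$ onto an interval $X_r := \varphi_r[X]$ of $X$. The family $\{X_r : |r| = n\}$ consists of disjoint intervals ordered like $A^n$, and $X_{ra} \subseteq X_r$ for each $a$, so these intervals form a tree refining with depth.

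Next I would analyze what happens in the limit. For $u \in A^\omega$, the intervals $X_{u \restriction n}$ are nested and decreasing, so their intersection $\bigcap_n X_{u \restriction n}$ is an interval $I'_u$ of $X$ (possibly empty, possibly a single point, possibly large). The point is that $X$ decomposes: every point of $X$ either lies in $\bigcap_n X_{u\restriction n}$ for a unique branch $u$, or else it gets ``separated out'' at some finite stage, landing strictly between two of the intervals $X_{r a}$ and $X_{ra'}$ for consecutive $a, a'$, i.e. in a gap of the tree. I would formalize this by defining, for each $u \in A^\omega$, the order $I_u$ to be the interval of $X$ consisting of $\bigcap_n X_{u \restriction n}$ together with whatever gap-material should be attributed to $u$ — concretely, choosing a bookkeeping scheme that assigns each point of $X$ not on any branch to exactly one branch $u$. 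Then $X \cong A^\omega(I_u)$ by the map sending the copy of $I_u$ over $u$ to its designated interval in $X$; that this is an order isomorphism follows because the lexicographic order on $A^\omega(I_u)$ is exactly mirrored by the tree order of the intervals $X_r$ inside $X$.

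The crucial remaining point — and I expect this to be the main obstacle — is to show that $I_u$ can be chosen to depend only on $[u]$, i.e. $I_u \cong I_v$ whenever $u \sim v$. Here I would use the self-similarity encoded in $\varphi$. If $u = ru'$ and $v = su'$ is a meeting representation, then $X_u = \varphi_r[X_{u'}]$ and $X_v = \varphi_s[X_{u'}]$, so $\varphi_r \circ \varphi_s^{-1}$ carries $X_v$ isomorphically onto $X_u$; more care is needed to see it carries the full $I_v$ (gap material included) onto $I_u$, which is why the bookkeeping scheme in the previous paragraph must be chosen $\sim$-equivariantly. The natural way to do this cleanly is to first handle the base case $v = u'$, $u = au'$ for a single $a \in A$: here $\varphi_a : X \to X$ restricts to an isomorphism $I_{u'} \to I_{au'}$ provided the gap-assignment commutes with $\varphi_a$, and then the general case follows by composing. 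So the real work is: (i) set up the tree of intervals $X_r$ and prove the nesting/ordering properties; (ii) design the assignment of gap-points to branches so that it is invariant under all the maps $\varphi_a$ simultaneously — this is the delicate combinatorial step, and one must be slightly careful because a point could a priori be ``claimed'' along several branches or require a canonical choice; (iii) verify the resulting $I_u$ satisfy $I_u = I_v$ for $u \sim v$ and that the obvious map $A^\omega(I_{[u]}) \to X$ is an order isomorphism. The forward direction, that any $A^\omega(I_{[u]})$ satisfies $AX \cong X$, is already established in the discussion preceding the theorem via the flattening map, so nothing more is needed there.
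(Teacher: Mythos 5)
Your outline follows the paper's proof almost step for step: the tree of intervals $X_r = \varphi_r[X]$, the nested intersections $I_u = \bigcap_n X_{u\upharpoonright n}$, the observation that these intervals sit inside $X$ in the lexicographic order of their indices, and the isomorphism $I_u \cong I_v$ for $u \sim v$ induced by $\varphi_s \circ \varphi_r^{-1}$ are all exactly the paper's argument. The one place you diverge is where you posit ``gap material'': points of $X$ that allegedly fall strictly between consecutive intervals $X_{ra}$ and $X_{ra'}$ at some finite stage and must be assigned to branches by a $\sim$-equivariant bookkeeping scheme. You flag this as the main obstacle and the delicate combinatorial step, and you do not carry it out --- but the step is vacuous. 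Since $\varphi : AX \to X$ is an isomorphism, in particular a bijection onto all of $X$, and $AX$ is partitioned by the intervals $aX$, the images $X_a = \varphi[aX]$ partition $X$ exactly, with nothing left over; applying $\varphi_r$ to this partition shows that for every finite $r$ the intervals $\{X_{ra} : a \in A\}$ partition $X_r$. Hence every $x \in X$ lies in $X_{u \upharpoonright n}$ for every $n$ for a unique branch $u$, so the sets $I_u$ already partition $X$ and there are no gap points to distribute. (What can go wrong is only that $I_u$ may be empty or not isomorphic to $X$, which is harmless and is why the statement allows arbitrary $I_{[u]}$.)

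Once the bookkeeping apparatus is deleted, your steps (i) and (iii) are precisely what the paper does and your proof closes up: the map sending the copy of $I_{[u]}$ over $u$ to the interval $\bigcap_n X_{u\upharpoonright n}$ is the desired isomorphism $A^{\omega}(I_{[u]}) \to X$, and the $\sim$-invariance of the isomorphism type of $I_u$ requires no equivariant choice, only the computation $f_s \circ f_r^{-1}\bigl[\bigcap_n X_{r(u'\upharpoonright n)}\bigr] = \bigcap_n X_{s(u'\upharpoonright n)}$, i.e.\ $I_{ru'} \cong I_{su'}$. So the proposal is essentially correct and essentially the paper's proof, marred only by an invented difficulty whose (nonexistent) resolution is left open; you should replace that paragraph with the one-line observation that the level-$n$ intervals partition $X$.
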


\begin{proof}
We have already seen that if $X$ is of the form $A^{\omega}(I_{[u]})$ then $X$ is isomorphic to $AX$. 

So assume that $AX \cong X$, as witnessed by an isomorphism $f: AX \rightarrow X$. For every $a \in A$, let $aX$ denote the set of pairs in $AX$ with first coordinate $a$. Let $X_a$ denote $f[aX]$. Each $aX$ is an interval in $AX$ isomorphic to $X$, and hence each $X_a$ is an interval in $X$ isomorphic to $X$. These intervals are disjoint and cover $X$, and if $a < b$, then the interval $X_a$ lies to the left of $X_b$, that is, $X_a < X_b$. We think of $AX \cong X$ as saying that $X$ can be split into $A$-many copies of $X$, and $X_a$ as being the $a$th copy of $X$ within $X$. Let $f_a: X \rightarrow X_a$ be the isomorphism defined by $f_a(x) = f(a, x)$. 

Given one of the intervals $X_b$ and some $a \in A$, we may think of the image $f_a[X_b]$ as the $b$th copy of $X$ within $X_a$. Denote this interval by $X_{(a, b)}$. Then $f_a \circ f_b$ is an isomorphism of $X$ with $X_{(a,b)}$. Extending this, given a finite sequence $r = (a_0, a_1, \ldots, a_n) \in A^{<\omega}$, define $X_r$ as $f_{a_0} \circ f_{a_1} \circ \ldots \circ f_{a_n}[X]$. Denote the isomorphism $f_{a_0} \circ f_{a_1} \circ \ldots \circ f_{a_n}: X \rightarrow X_{r}$ by $f_{r}$. 

It is immediate that for any $r, s \in A^{<\omega}$, we have $f_r[X_s] = X_{rs}$, and conversely, $f_r^{-1}[X_{rs}] = X_s$. If $t$ extends $r$, then $X_t$ is a subinterval of $X_r$. If neither one of $r, t$ extends the other, and they differ first in the $n$th place with $r_n < t_n$, then $X_r < X_t$ in $X$.

Now, given an infinite sequence $u \in A^{\omega}$, define $I_u$ to be the interval obtained by taking the natural nested intersection:
\[
I_u = \bigcap_{n \in \omega} X_{u \upharpoonright n}.
\]

For a given $u$, the interval $I_u$ need not be isomorphic to $X$. Indeed, $I_u$ may be empty. However, every $x \in X$ is in some $I_u$. Further if $u < v$ in $A^{\omega}$, then for some $n$ we have $u \upharpoonright n < v \upharpoonright n$. Hence $X_{u \upharpoonright n} < X_{v \upharpoonright n}$ in $X$, and so $I_u < I_v$ in $X$. Thus we have that the $I_u$ in fact partition $X$, and further that they respect the lexicographical ordering of their indices. This shows $X \cong A^{\omega}(I_u)$. 

It remains to show that for $u \sim v$, we have $I_u \cong I_v$. If $u=ru'$ and $v=su'$, then if we begin in $X_r$ and $X_s$ (each copies of $X$) and descend in each interval along the same nested sequence of subintervals prescribed by $u'$, we arrive at $I_u$ and $I_v$, respectively. Hence, $I_u \cong I_v$. The isomorphism is given by $f_s \circ f_r^{-1}$, restricted to $I_u$. To see this explicitly, note that we have
\[
I_u = \bigcap_{n} X_{r(u' \upharpoonright n)},
\]
where $r(u\upharpoonright n)$ means $r$ concatenated with $(u \upharpoonright n)$. 
Thus, since $f_s \circ f_r^{-1}: X_r \rightarrow X_s$ is an isomorphism, we have

\[
 \begin{array}{ccl}
f_s \circ f_r^{-1}[I_u] & = & f_s \circ f_r^{-1}[\bigcap_{n} X_{r(u' \upharpoonright n)}]\\
& = & \bigcap_{n} f_s \circ f_r^{-1} [X_{r(u' \upharpoonright n})] \\
& = & \bigcap_{n} X_{s(u' \upharpoonright n)} \\
& = & I_{su'} \\ 
& = & I_v
\end{array}
\]

That is, $f_s \circ f_r^{-1}$ witnesses $I_u \cong I_v$, as claimed.

Since $u \sim v$ implies $I_u \cong I_v$, we may write $I_{[u]}$ for the single order type of $I_v$ for all $v \in [u]$. We have showed $X \cong A^{\omega}(I_{[u]})$.
\end{proof}

This theorem plays a basic role in the remainder of the paper, and some discussion is warranted. We have seen that if $X = A^{\omega}(I_{[u]})$ for some collection of orders $I_{[u]}$, then there is a natural isomorphism $f: AX \rightarrow X$ defined by $(a, u, x) \mapsto (au, x)$. That is, $f$ is the flattening map on the first two coordinates and the identity on the last. We write $f = (fl, \textrm{id})$. 

Even if we do not have identity, but only an isomorphism $F: X \rightarrow A^{\omega}(I_{[u]})$, there is still a natural isomorphism $f: AX \rightarrow X$, namely $f = F^{-1} \circ (fl, \textrm{id}) \circ (\textrm{id}, F)$. This is the same flattening map, up to the relabeling $F$. 

On the other hand, suppose we are given an isomorphism $f: AX \rightarrow X$. The proof of Theorem \ref{thmax} shows that we can use this $f$ to build an isomorphism $F$ from $X$ onto an order of the form $A^{\omega}(I_{[u]})$. In fact the proof shows more: if we view $f$, via the relabeling $F$, as an isomorphism of $A \times A^{\omega}(I_{[u]})$ with $A^{\omega}(I_{[u]})$, then $f$ is just the flattening map on the first two coordinates. (It need not be true that $f$ is the identity on the last coordinate, but we shall ignore this subtlety.) 

In this sense, there is a correspondence between isomorphisms $f: AX \rightarrow X$ and labelings $F: X \rightarrow A^{\omega}(I_{[u]})$. Given a labeling of $X$ as $A^{\omega}(I_{[u]})$, we have the natural flattening isomorphism between $AX$ and $X$, and given an isomorphism $f: AX \rightarrow X$, we can relabel $X$ as $A^{\omega}(I_{[u]})$ in such a way that $f$ is just the flattening isomorphism. 

\subsection{The isomorphism $A^2X \cong X$}
Our next goal is to characterize, for a fixed $n > 1$, those $X$ for which $A^nX \cong X$. We focus here on the case when $n=2$, but the work easily generalizes. A list of the corresponding definitions and results for the $n$ case is provided later in the section. 

In a sense, we have already succeeded in getting a characterization. By Theorem \ref{thmax}, $A^2X \cong X$ if and only if $X$ is isomorphic to an order of the form $(A^2)^{\omega}(I_{[u]}$), where now $[u]$ denotes a tail-equivalence class in $(A^2)^{\omega}$. But $(A^2)^{\omega}$ is naturally isomorphic to $A^{\omega}$ under the map $((a_0, a_1), (a_2, a_3), \ldots) \mapsto (a_0, a_1, a_2, a_3, \ldots)$. It is useful to characterize such an $X$ as a replacement of $A^{\omega}$ instead of as a replacement of $(A^2)^{\omega}$. 

\theoremstyle{definition}
\newtheorem{2tailequiv}[theorem]{Definition}
\begin{2tailequiv}\label{2tailequiv}
Two sequences $u, v \in A^{\omega}$ are called \emph{2-tail-equivalent} if there exist $r, s \in A^{<\omega}$ with $|r| \equiv |s| \pmod{2}$ and a sequence $u' \in A^{\omega}$ such that $u = ru'$ and $v = su'$. If $u$ and $v$ are 2-tail-equivalent, we write $u \sim_2 v$. 
\end{2tailequiv}

The definition of 2-tail-equivalence is identical to that of tail-equivalence, except that in the meeting representation $u=ru'$, $v=su'$ witnessing $u \sim_2 v$, the lengths of $r$ and $s$ are required to have the same parity. Reflexivity and symmetry of the relation $\sim_2$ are immediate, and transitivity can be checked. Hence 2-tail-equivalence is an equivalence relation. The $\sim_2$-equivalence class of $u$ is denoted $[u]_2$.  

We will need to see precisely how tail-equivalence classes are related to 2-tail-equivalence classes. If $u \sim_2 v$, then certainly $u \sim v$ as well. Hence the 2-tail-equivalence relation refines the tail-equivalence relation, and every $\sim$-class $[u]$ splits into some number of $\sim_2$-classes. Clearly in fact, it splits into at most two such classes: if $v \sim u$ as witnessed by $u = ru'$ and $v = su'$, then either $|s| \equiv |r| \pmod{2}$ or $|s| \equiv |r| + 1 \pmod{2}$. In the first case we have $v \sim_2 u$, and in the second, for any $a \in A$, we have $|s| \equiv |ar| \pmod{2}$, and hence $v \sim_2 au$. Thus $[u] = [u]_2 \cup \, [au]_2$ for any fixed $a \in A$. 

The classes $[u]_2$ and $[au]_2$ are either equal or disjoint, depending on whether $au \sim_2 u$ or not. If $au \sim_2 u$, we have $[u] = [u]_2$; otherwise $[u]$ splits into the two disjoint classes $[u]_2$ and $[au]_2$. 

Usually, $au \not \sim_2 u$. Certainly the obvious representation $u = ru'$, $au = su'$, where $r = \emptyset$, $s = a$, and $u' = u$ does not witness $au \sim_2 u$. Most of the time this obvious representation is the only one, up to unzipping along the tail-sequence. However, if $u$ is an eventually periodic sequence then it is possible to get truly distinct representations, and in some of these cases we have $au \sim_2 u$.  

For any finite sequence $s \in A^{<\omega}$, let $\overline{s}$ denote the sequence $sss\ldots \in A^{\omega}$. Define a sequence $u \in A^{\omega}$ to be \emph{eventually periodic} if there exist $r, s \in A^{<\omega}$ such that $u = rsss\ldots = r\overline{s}$. Such a sequence is said to be repeating in $s$. If $r=\emptyset$ we say $u$ is simply \emph{periodic}. If $n$ is the shortest possible length of a sequence in which $u$ repeats, we say $u$ has period $n$.

To prove the following proposition, we use the fact that if $n$ is the period of $u$ and we have a ``periodic representation" of $u$ as $u=r\overline{s}$, then $n$ divides $|s|$.

\theoremstyle{definition}
\newtheorem{period}[theorem]{Proposition}
\begin{period}\label{period}
Fix elements $u \in A^{\omega}$ and $a \in A$. Then $au \sim_2 u$ if and only if $u$ is eventually periodic and the period of $u$ is odd. 
\end{period}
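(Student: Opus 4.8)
The plan is to prove the two implications separately. The structural observation driving everything is that prepending a single letter $a$ shifts all indices of $u$ by one --- an \emph{odd} amount --- so a meeting representation of $au$ and $u$ whose two finite prefixes have the \emph{same} parity can exist only when some tail of $u$ repeats with an odd period, and conversely such periodicity is exactly what lets one build the representation.

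For the direction ``odd period $\Rightarrow au \sim_2 u$'', suppose $u$ is eventually periodic with odd period $n$. By the definition of the period we may fix a representation $u = r\overline{s}$ with $|s| = n$. Since $\overline{s} = s\overline{s}$, we rewrite $u = (rs)\overline{s}$, while $au = (ar)\overline{s}$. Now $|rs| = |r| + n$ and $|ar| = |r| + 1$ have the same parity because $n - 1$ is even, so $au = (ar)\overline{s}$ together with $u = (rs)\overline{s}$ is a meeting representation witnessing $au \sim_2 u$. (This direction uses only the definition of the period, not the divisibility fact quoted above.)

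For the converse, suppose $au \sim_2 u$, witnessed by $au = ru'$, $u = su'$ with $|r| \equiv |s| \pmod{2}$. Reading off coordinates, $(au)_0 = a$ and $(au)_i = u_{i-1}$ for $i \geq 1$, so the identity $(au)_{|r| + j} = u'_j = u_{|s| + j}$, valid for all $j \geq 0$, becomes a statement about $u$ alone. If $|r| \geq 1$ it reads $u_{|r| - 1 + j} = u_{|s| + j}$ for all $j$; the parity hypothesis forces $|r| - 1 \not\equiv |s| \pmod{2}$, hence $D := \abs{|r| - 1 - |s|}$ is a positive odd integer, and the identity says exactly that the tail of $u$ beyond position $\min(|r| - 1, |s|)$ is periodic with period dividing $D$. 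So $u$ is eventually periodic, repeating in a block of odd length $D$. If instead $|r| = 0$, then $au = u'$ and so $u = su' = s(au) = (sa)u$, forcing $u = \overline{sa}$; here $|sa| = |s| + 1$ is odd because $|s|$ is even. In both cases $u$ is eventually periodic, and by the fact quoted just before the proposition --- that the period of $u$ divides the length of every block in which $u$ repeats --- its period divides an odd number, hence is odd.

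I expect no serious obstacle. The only things needing care are the index bookkeeping around the one-place shift introduced by $a$, and handling the degenerate sub-cases (notably $|r| = 0$, and within the $|r| \geq 1$ case the two possibilities $|r| - 1 < |s|$ and $|r| - 1 > |s|$), where the tail-matching identity must be read slightly differently. The one genuine external input is the quoted divisibility property of periods, which is precisely what converts ``$u$ repeats in an odd-length block'' into ``$u$ has odd period.''
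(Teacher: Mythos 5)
Your proof is correct and follows essentially the same route as the paper's: the forward direction builds the same explicit meeting representation with tail $\overline{s}$, and the converse extracts an odd-length repeating block from the parity mismatch between the two prefixes and then invokes the same divisibility-of-the-period fact. The only difference is cosmetic (you chase indices where the paper observes that one prefix is an initial segment of the other), and you additionally make explicit the degenerate case $|r|=0$ that the paper handles only implicitly.
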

\begin{proof}
Suppose $u$ is repeating in $s$, and $|s|$ is of minimal length, i.e. $|s|$ = period of $u$. Assume first that $|s| \equiv 1 \pmod 2$. We have $u = r\overline{s}$ for some $r \in A^{<\omega}$. Let $u' = \overline{s}$ be the periodic tail-sequence, and let $t=ars$. Then since $|s|$ is odd, we have $|t| = |ars| \equiv |r| \pmod{2}$. Thus the meeting representation $u=ru'$ and $au=tu'$ witnesses $au \sim_2 u$. 

Conversely, suppose $au \sim_2 u$, as witnessed by $u = ru'$ and $au = (ar')u'$, with $|r| \equiv |ar'| \pmod{2}$. Hence $|r| \not\equiv |r'| \pmod{2}$. Since both $r$ and $r'$ are initial sequences of $u$, it must be that one is an initial sequence of the other. Assume $r'$ is an initial sequence of $r$, that is, $r = r's$ for some $s \in A^{\omega}$. Then since $r, r'$ are inequivalent modulo 2, it must be that $|s|$ is odd. Now, on one hand $u = ru' = r'su'$, and on the other $u = r'u'$. Hence $u' = su'$. The only way this is possible is if $u' = \overline{s}$. Therefore $u=r\overline{s}$ is an eventually periodic sequence. It may be that the period of $u$ is shorter than $|s|$, but since $|s|$ is odd, the period of $u$ must be odd as well. The case when $r$ is an initial sequence of $r'$ is similar. 
\end{proof}

Thus $[u] = [u]_2$ if and only if $u$ is eventually periodic and of odd period. Since they play an especially important role in what follows, we emphasize that for sequences $u$ of period 1 we have $[u] = [u]_2 = [au]_2$. For sequences $u$ that are not eventually periodic we have $[u]_2 \cap [au]_2 = \emptyset$, where $a$ is any fixed element of $A$. 

Just as the $\sim$-classes $[u]$ were the smallest suborders of $A^{\omega}$ invariant under left multiplication by $A$, the $\sim_2$-classes $[u]_2$ are the smallest suborders of $A^{\omega}$ invariant under left multiplication by $A^2$. Let $fl_2: A^2 \times A^{\omega} \rightarrow A^{\omega}$ denote the ``2-flattening isomorphism" defined by $(a, b, u) \mapsto abu$. 

\theoremstyle{definition}
\newtheorem{prop2tailequiv}[theorem]{Proposition}
\begin{prop2tailequiv}\label{prop2tailequiv}
Fix a suborder $X \subseteq A^{\omega}$.  Then $fl_2[A^2X] = X$ if and only if $X$ is a union of $\sim_2$-classes. 
\end{prop2tailequiv}
\begin{proof}
The proof is essentially identical to the proof of Proposition \ref{proptailequiv}. There, the relevant fact was that $au \sim u$ for all $a, u$. Here, it is that $abu \sim_2 u$ for all $a, b, u$. 
\end{proof}

Just as $fl$ sends $A[u]$ isomorphically onto $[u]$ for every $u \in A^{\omega}$, $fl_2$ sends $A^2[u]_2$ isomorphically onto $[u]_2$. Less formally, we can express this by saying $A[u] = [u]$ and $A^2[u]_2 = [u]_2$ for all $u \in A^{\omega}$. A natural question is what happens to a given 2-tail-equivalence class $[u]_2$ when it is multiplied by a single factor of $A$.

\theoremstyle{definition}
\newtheorem{Au2}[theorem]{Proposition}
\begin{Au2}\label{Au2}
For all $u \in A^{\omega}$ and $a \in A$ we have $fl[A[u]_2] = [au]_2$. 
\end{Au2}
\begin{proof}
If $x \in fl[A[u]_2]$, then $x = bv$ for some $b \in A$ and $v \sim_2 u$. But then $bv \sim_2 au$, i.e. $x \sim_2 au$. Conversely, if $x \sim_2 au$ then writing $x=x_0x'$ we have that $x' \sim_2 u$ and hence $x \in fl[A[u]_2]$. 
\end{proof}

Informally this says $A[u]_2 = [au]_2$. It follows that $A[au]_2 = [u]_2$, from which we recover $A^2[u]_2 = [u]_2$. 

We can use Proposition \ref{prop2tailequiv} to produce orders $X$ such that $A^2X \cong X$. For every class $[u]_2$, fix an order $I_{[u]_2}$. Let $A^{\omega}(I_{[u]_2})$ denote the order $A^{\omega}(I_u)$, where if $v \sim_2 u$, then $I_v = I_u = I_{[u]_2}$. Then $A^2X \cong X$. The isomorphism is given by $((a, b), u, x) \mapsto (abu, x)$. This map makes sense because both the interval consisting of points $((a, b), u, \cdot)$ in $A^2X$ and the interval of points $(abu, \cdot)$ in $X$ are of type $I_{[u]_2} = I_{[abu]_2}$. 

The form $A^{\omega}(I_{[u]_2})$ is general for orders $X$ isomorphic to $A^2X$. 

\theoremstyle{definition}
\newtheorem{AsquaredX}[theorem]{Theorem}
\begin{AsquaredX}\label{AsquaredX}
Fix an order $X$. Then $A^2X \cong X$ if and only if $X$ is isomorphic to an order of the form $A^{\omega}(I_{[u]_2})$. 
\end{AsquaredX}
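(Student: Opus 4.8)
The plan is to reduce everything to Theorem \ref{thmax} applied with $A^2$ in place of $A$. The backward direction --- that every order of the form $A^{\omega}(I_{[u]_2})$ satisfies $A^2X \cong X$ --- was already checked in the discussion immediately preceding the statement, so only the forward direction needs an argument.

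So suppose $A^2X \cong X$. Regarding $A^2$ as a single fixed order, Theorem \ref{thmax} gives that $X$ is isomorphic to an order of the form $(A^2)^{\omega}(I_{[w]})$, where $[w]$ now ranges over the tail-equivalence classes of the tree $(A^2)^{\omega}$. It then remains to transport this description across the natural order isomorphism
\[
\varphi: (A^2)^{\omega} \to A^{\omega}, \qquad \varphi\big(((a_0,a_1),(a_2,a_3),\ldots)\big) = (a_0,a_1,a_2,a_3,\ldots),
\]
which is order-preserving precisely because the order on $A^2$ is itself the lexicographical order induced from two coordinates of $A$. Since $\varphi$ is an order isomorphism, the replacement $(A^2)^{\omega}(I_{[w]})$ is isomorphic to the replacement $A^{\omega}(I_{\varphi(w)})$, the interval replacing $\varphi(w) \in A^{\omega}$ being $I_{[w]}$; this is the transport-of-replacements remark from Section 2.

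The one substantive point is to identify which classes of $A^{\omega}$ occur this way. The claim to establish is that for $u, v \in A^{\omega}$ we have $u \sim_2 v$ if and only if $\varphi^{-1}(u)$ and $\varphi^{-1}(v)$ are tail-equivalent in $(A^2)^{\omega}$. Tail-equivalence in $(A^2)^{\omega}$ says exactly that $u$ and $v$ share a common tail beginning at an even coordinate of each, i.e. $u = ru'$, $v = su'$ with $|r|$ and $|s|$ both even; this obviously implies $u \sim_2 v$. Conversely, given a meeting representation $u = ru'$, $v = su'$ with $|r| \equiv |s| \pmod{2}$ witnessing $u \sim_2 v$, if $|r|$ and $|s|$ are both odd I unzip once --- replace $r, s$ by $ra, sa$ and $u'$ by its first tail-sequence, where $a$ is the first entry of $u'$ --- to obtain a meeting representation with both lengths even. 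Hence the $\sim_2$-classes of $A^{\omega}$ are exactly the $\varphi$-images of the tail-equivalence classes of $(A^2)^{\omega}$, and writing $I_{[u]_2}$ for $I_{[\varphi^{-1}(u)]}$ we conclude $X \cong A^{\omega}(I_{[u]_2})$.

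The only place requiring genuine care is this last correspondence between the two equivalence relations, and in particular the observation that an odd-over-odd meeting representation can always be converted to an even-over-even one by unzipping once; everything else is a direct application of Theorem \ref{thmax} together with the fact that an order isomorphism of index orders lifts to an isomorphism of the corresponding replacements.
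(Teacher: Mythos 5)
Your proposal is correct and follows essentially the same route as the paper: apply Theorem \ref{thmax} to the single order $A^2$, then transport the resulting decomposition $(A^2)^{\omega}(I_{[w]})$ across the flattening isomorphism $(A^2)^{\omega} \to A^{\omega}$, using the fact that tail-equivalence in $(A^2)^{\omega}$ corresponds exactly to $\sim_2$ in $A^{\omega}$. The only difference is that you spell out the unzipping argument for the odd-over-odd case of that correspondence, which the paper leaves as ``straightforward to check.''
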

\begin{proof}
We have already showed the forward direction. Assume $A^2X \cong X$. Then by Theorem \ref{thmax} we have that $X$ is of the form $(A^2)^{\omega}(J_{[w]})$, where $[w]$ denotes a tail-equivalence class in $(A^2)^{\omega}$. Let $Fl: (A^2)^{\omega} \rightarrow A^{\omega}$ be the isomorphism defined by $((a_0, a_1), (a_2, a_3), \ldots) \mapsto (a_0, a_1, a_2, \ldots)$. 

It is straightforward to check that for $w, x \in (A^2)^{\omega}$, we have $w \sim x$ (in the sense of $(A^2)^{\omega}$) if and only if $Fl(w) \sim_2 Fl(x)$ (in the sense of $A^{\omega}$). 

For every $u \in A^{\omega}$, define $I_u$ to be $J_{Fl^{-1}(u)}$. Then $J_{w} = I_{Fl(w)}$ for every $w \in (A^2)^{\omega}$. Hence the isomorphism $Fl: (A^2)^{\omega} \rightarrow A^{\omega}$ lifts to give an isomorphism of $(A^2)^{\omega}(J_{w})$ with $A^{\omega}(I_u)$. Since $w \sim x$ in $(A^2)^{\omega}$ implies $J_w = J_x$, we have that $u \sim_2 v$ in $A^{\omega}$ implies $I_u = I_v$. Thus $X$ is of the form $A^{\omega}(I_{[u]_2})$. 
\end{proof}

\subsection{When $A^2X \cong X$ implies $AX \cong X$} 

We now turn to the question of when the isomorphism $A^2X \cong X$ implies $AX \cong X$. Notice that if $A^2X \cong X$ and we use Theorem \ref{AsquaredX} to decompose $X$ as $A^{\omega}(I_{[u]_2})$, then there is no ``obvious" isomorphism between $AX$ and $X$. The natural guess for such an isomorphism would be the flattening map $(a, u, x) \mapsto (au, x)$. But the definition of this map may be meaningless: the interval consisting of tuples $(a, u, \cdot)$ in $AX$ is of type $I_u = I_{[u]_2}$ whereas the interval $(au, \cdot)$ in $X$ is of type $I_{[au]_2}$. If $u \not\sim_2 au$, these orders may be distinct. 

If it so happens that $I_{[u]_2} = I_{[au]_2}$ for every $u$ and $a$, then the flattening map makes sense and witnesses $AX \cong X$. Indeed if this is so we may denote the common order type of $I_{[u]_2}$ and $I_{[au]_2}$ by $I_{[u]}$, so that $X \cong A^{\omega}(I_{[u]})$. 

But this need not be the case. For example, consider $\mathbb{Z}^{\omega}$ and let $v$ denote the sequence $(1, 2, 3, \ldots)$. Then $v$ is not eventually periodic and hence not 2-tail-equivalent to $0v = (0, 1, 2, \ldots)$. Let $I_{[v]_2} = 1$ and $I_{[0v]_2}=2$. For all other $\sim_2$-classes $[u]_2$, let $I_{[u]_2}= \emptyset$. Let $X = \mathbb{Z}^{\omega}(I_{[u]_2})$. Then the map $((a, b), u, x) \mapsto (abu, x)$ witnesses $\mathbb{Z}^2X \cong X$, while the ``map" $(a, u, x) \mapsto (au, x)$, which purports to send each copy of $1$ in $AX$ isomorphically onto a copy of $2$ in $X$, and vice versa, is meaningless. In this sense it is not immediate that $\mathbb{Z}X \cong X$ (though, in this case, this turns out to be true). 

The following proposition describes the precise relationship between $AX$ and $X$ when $A^2X \cong X$. It echoes Proposition \ref{Au2}. 

\theoremstyle{definition}
\newtheorem{switching}[theorem]{Proposition}
\begin{switching}\label{switching}
Suppose $X$ is an order such that $A^2X \cong X$, so that $X$ is of the form $A^{\omega}(I_{[u]_2})$, and let $Y = AX$. Then $Y \cong A^{\omega}(J_{[u]_2})$, where for all $u \in A^{\omega}$ and $a \in A$, we have $J_{[u]_2} = I_{[au]_2}$. 
\end{switching}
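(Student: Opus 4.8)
The plan is to compute $Y=AX$ directly, by peeling off one factor of $A$ and re-flattening, and then reading off the new labels from the coordinate shift that the flattening map performs. First I would rewrite the product: since $X = A^{\omega}(I_u)$ with $I_u = I_v$ whenever $u \sim_2 v$, the order $Y = A\times A^{\omega}(I_u)$ is canonically identified, via the re-bracketing $(a,(u,i))\mapsto((a,u),i)$ from Section 2, with $(A\times A^{\omega})(J'_{(a,u)})$, where $J'_{(a,u)} = I_u$ for every $a\in A$.

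Next I would apply the flattening isomorphism $fl:A\times A^{\omega}\to A^{\omega}$, $(a,u)\mapsto au$, of Example \ref{exfull}. Since $fl$ is a bijection, every $w\in A^{\omega}$ has the unique preimage $(w_0,w')$, where $w'=(w_1,w_2,\dots)$ is $w$ with its first entry deleted. By the principle from Section 2 that an isomorphism lifts through a replacement provided the label families match up, $(A\times A^{\omega})(J'_{(a,u)})\cong A^{\omega}(K_w)$, where $K_w = J'_{fl^{-1}(w)} = J'_{(w_0,w')} = I_{w'}$.

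The one substantive point is to identify $I_{w'}$ with $J_{[w]_2}$, and this is the only place any combinatorics enters. Here I would use the parity observation that $w' \sim_2 aw$ for every $a\in A$: taking the common tail $w'$ itself, we have $w' = (\,)\,w'$ and $aw = (a,w_0)\,w'$, and $0\equiv 2\pmod 2$. Hence $w'$ lies in the $\sim_2$-class $[aw]_2$, so by the hypothesis that $I$ is constant on $\sim_2$-classes, $K_w = I_{w'} = I_{[aw]_2}$. Before writing $J_{[u]_2}$ I would check this is a legitimate label on $\sim_2$-classes: the value $I_{[aw]_2}$ does not depend on the choice of $a$ (for $a,b\in A$ we have $aw\sim_2 bw$, both prefixes having length $1$, so $I_{[aw]_2}=I_{[bw]_2}$), nor on the representative of $[w]_2$ (if $u\sim_2 v$, prepending $a$ to a meeting representation preserves the parity condition, so $au\sim_2 av$ and $I_{[au]_2}=I_{[av]_2}$). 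Thus $J_{[w]_2}:=I_{[aw]_2}$ is well defined and $K_w = J_{[w]_2}$; chaining the isomorphisms and renaming the index $w$ as $u$ gives $Y\cong A^{\omega}(K_w) = A^{\omega}(J_{[u]_2})$, as claimed.

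I do not expect any real obstacle: the whole argument is bookkeeping with the flattening map together with the single fact $w'\sim_2 aw$. The only thing requiring care is the order of "multiply by $A$" versus "replace", which is exactly the left-associativity convention of Section 2, and the fact that deleting one coordinate shifts parity by one — which is precisely the reason the label attached to a $\sim_2$-class in $Y$ comes from the \emph{other} $\sim_2$-class inside the same $\sim$-class, so that this proposition genuinely echoes Proposition \ref{Au2}.
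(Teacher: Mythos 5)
Your proposal is correct and follows essentially the same route as the paper: both arguments come down to applying the flattening map $(a,u,x)\mapsto(au,x)$ to $AX$ and checking that the labels match via the parity fact that deleting or prepending two coordinates preserves $\sim_2$ (your $w'\sim_2 aw$ is the paper's $aau\sim_2 u$ read in the other direction). The only difference is cosmetic — you read the labels off the inverse of the flattening map and add an explicit well-definedness check for $J_{[u]_2}=I_{[au]_2}$, which the paper leaves implicit in Proposition \ref{Au2}.
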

\begin{proof}
Define $J_{[u]_2}=I_{[au]_2}$ as in the statement of the theorem, and assume for simplicity that $X$ is not only isomorphic to but in fact equals $A^{\omega}(I_{[u]_2})$. Then the isomorphism between $AX$ and $A^{\omega}(J_{[u]_2})$ is given by the flattening map $(a, u, x) \mapsto (au, x)$. This is an isomorphism since each interval in $AX$ consisting of points $(a, u, \cdot)$ is of type $I_{[u]_2}$ and each interval $(au, \cdot)$ in $A^{\omega}(J_{[u]_2})$ is of type $J_{[au]_2} = I_{[aau]_2} = I_{[u]_2}$ as well. 
\end{proof}

Thus if $A^2X \cong X$, we obtain $AX$ by interchanging the role of each $I_{[u]_2}$ with $I_{[au]_2}$ in the decomposition $X \cong A^{\omega}(I_{[u]_2})$. It is worth noting that in the case when $[u]_2 = [au]_2 = [u]$, no interchange is needed: in this case the orders $I_{[u]_2}$, $I_{[au]_2}$, and $J_{[u]_2}$, $J_{[au]_2}$ are all identical, whereas in general only the equalities $J_{[u]_2}=I_{[au]_2}$ and $I_{[u]_2} = J_{[au]_2}$ hold. 

The upshot of Proposition \ref{switching} is that, in the case where $A^2X \cong X$, if we can find an order automorphism of $A^{\omega}$ that maps each $\sim_2$-class $[u]_2$ onto $[au]_2$, we can lift it to obtain an isomorphism of $AX$ with $X$. 

\theoremstyle{definition}
\newtheorem{pra}[theorem]{Definition}
\begin{pra}\label{pra}
An order automorphism $f: A^{\omega} \rightarrow A^{\omega}$ is called a \emph{parity-reversing automorphism} (abbreviated p.r.a.) if $f(u) \in [au]_2$ for every $u \in A^{\omega}$ and $a \in A$. Equivalently, an order automorphism $f$ of $A^{\omega}$ is a p.r.a. if for every $u \in A^{\omega}$ and $a \in A$ the image of $[u]_2$ under $f$ is $[au]_2$.
\end{pra}

It follows that the image of $[au]_2$ under a parity-reversing automorphism $f$ is $[u]_2$. Parity-reversing automorphisms are ``idempotent on $\sim_2$-classes" in this sense. 

\theoremstyle{definition}
\newtheorem{proppra}[theorem]{Proposition}
\begin{proppra}\label{proppra}
Suppose that $A^{\omega}$ admits a parity-reversing automorphism. Then for every order $X$, if $A^2X \cong X$ then $AX \cong X$.
\end{proppra}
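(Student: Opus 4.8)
The plan is to assemble the machinery already developed: Theorem \ref{AsquaredX} to pin down the form of $X$, Proposition \ref{switching} to pin down the form of $AX$, and the remark closing Section 2.1 (isomorphisms factor through replacements) to finish.

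First, since $A^2X \cong X$, Theorem \ref{AsquaredX} provides an isomorphism of $X$ with an order of the form $A^{\omega}(I_{[u]_2})$; as usual I would absorb this relabeling and simply assume $X = A^{\omega}(I_{[u]_2})$, so that $I_u$ is a well-defined order for every $u \in A^{\omega}$ with $I_u = I_v$ whenever $u \sim_2 v$. Next, fix any element $a \in A$. By Proposition \ref{switching}, $AX \cong A^{\omega}(J_{[u]_2})$, where $J_u := J_{[u]_2} = I_{[au]_2}$ for every $u$; in other words, $AX$ is gotten from $X$ by interchanging the label on $[u]_2$ with the label on $[au]_2$.

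Now invoke the hypothesis: let $f \colon A^{\omega} \to A^{\omega}$ be a parity-reversing automorphism. By Definition \ref{pra}, $f(u) \in [au]_2$ for every $u$, and hence $I_{f(u)} = I_{[f(u)]_2} = I_{[au]_2} = J_{[u]_2} = J_u$. Thus the two families $\{I_u : u \in A^{\omega}\}$ and $\{J_u : u \in A^{\omega}\}$ satisfy $J_u = I_{f(u)}$ for all $u$, so by the remark at the end of Section 2.1 the map $(u, x) \mapsto (f(u), x)$ is an order isomorphism of $A^{\omega}(J_u)$ onto $A^{\omega}(I_u)$. Chaining, $AX \cong A^{\omega}(J_{[u]_2}) \cong A^{\omega}(I_{[u]_2}) \cong X$, which is the desired conclusion.

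I do not expect any real obstacle in this argument: all the substantive work is in the preceding subsections, and the notion of a parity-reversing automorphism was isolated precisely so that this implication becomes a bookkeeping exercise. The genuine difficulty lies elsewhere — namely, in Section 4, where one must actually construct parity-reversing automorphisms of $A^{\omega}$ for the relevant orders $A$ (and, for the cube property, verify that $X^3 \cong X$ forces $X^{\omega}$ to admit one).
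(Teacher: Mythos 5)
Your argument is correct and is essentially the paper's own proof: both decompose $X$ as $A^{\omega}(I_{[u]_2})$ and $AX$ as $A^{\omega}(J_{[u]_2})$ via Theorem \ref{AsquaredX} and Proposition \ref{switching}, and then lift the parity-reversing automorphism $f$ to the map $(u,x)\mapsto(f(u),x)$, checking well-definedness through $J_{[u]_2}=I_{[au]_2}=I_{[f(u)]_2}$. The only (immaterial) difference is that you write the isomorphism in the direction $AX\to X$ while the paper writes it as $X\to AX$.
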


\begin{proof}
Let $f: A^{\omega} \rightarrow A^{\omega}$ be a parity-reversing automorphism. Fix $X$ and assume $A^2X \cong X$. Writing $X$ as $A^{\omega}(I_{[u]_2})$ and $AX$ as $A^{\omega}(J_{[u]_2})$, we may define an isomorphism from $X$ to $AX$ by the map $(u, x) \mapsto (f(u), x)$. This map is well-defined: the interval $(u, \cdot)$ in $X$ is of type $I_{[u]_2}$, and the interval $(f(u), \cdot)$ in $AX$ is of type $J_{[au]_2} = I_{[u]_2}$ since $f(u) \in [au]_2$. Hence $X \cong AX$. 
\end{proof}

In the context of the cube problem, the case of interest is when $A=X$. If $X$ is an order such that $X^3 \cong X$, then we may rewrite this as $X^2X \cong X$. By \ref{proppra}, if $X^{\omega}$ has a p.r.a., then $XX \cong X$ as well, i.e. $X^2 \cong X$. In Section 5, parity-reversing isomorphisms for $A^{\omega}$ are constructed for many different kinds of orders $A$, culminating in the proof that if $X^3 \cong X$ then indeed $X^{\omega}$ has a p.r.a., no matter the cardinality of $X$.

\subsection{The isomorphism $A^nX \cong X$} We provide the analogous definitions and results (without proof) for analyzing orders that satisfy the isomorphism $A^nX \cong X$, for some fixed $n > 1$. 

\theoremstyle{definition}
\newtheorem{ntailequiv}[theorem]{Definition}
\begin{ntailequiv}\label{ntailequiv}
 Two sequences $u, v \in A^{\omega}$ are called \emph{$n$-tail-equivalent} if there exist finite sequences $r, s \in A^{<\omega}$ with $|r| \equiv |s| \pmod{n}$ and a sequence $u' \in A^{\omega}$ such that $u = ru'$ and $v = su'$. If $u$ and $v$ are $n$-tail-equivalent, we write $u \sim_n v$. 
\end{ntailequiv}

It may be checked that $\sim_n$ is an equivalence relation. The $\sim_n$-class of $u$ is denoted $[u]_n$. The characterization of the isomorphism $A^nX \cong X$ is given by the following theorem. 

\theoremstyle{definition}
\newtheorem{AnX}[theorem]{Theorem}
\begin{AnX}\label{AnX}
For an order $X$, we have $A^nX \cong X$ if and only if $X$ is isomorphic to an order of the form $A^{\omega}(I_{[u]_n})$. 
\end{AnX}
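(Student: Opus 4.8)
The plan is to mimic the proof of Theorem~\ref{AsquaredX}, which is precisely the case $n=2$, replacing the factor $A^2$ by $A^n$ throughout. The two directions are handled separately, and only one small combinatorial point requires care.

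For the forward direction, suppose $X$ is isomorphic to an order of the form $A^{\omega}(I_{[u]_n})$, i.e. $X = A^{\omega}(I_u)$ where $u \sim_n v$ implies $I_u = I_v$. Introduce the $n$-flattening map $fl_n : A^n \times A^{\omega} \to A^{\omega}$ given by $(a_0, \ldots, a_{n-1}, u) \mapsto a_0 a_1 \cdots a_{n-1} u$. Since the offsets $a_0 \cdots a_{n-1}$ and $\emptyset$ have lengths $n$ and $0$, which are congruent mod $n$, we have $a_0 \cdots a_{n-1} u \sim_n u$ for every choice of the $a_i$. Hence the interval of points $((a_0, \ldots, a_{n-1}), u, \cdot)$ in $A^n X$ has type $I_{[u]_n}$, and the interval of points $(a_0 \cdots a_{n-1} u, \cdot)$ in $X$ has type $I_{[a_0 \cdots a_{n-1} u]_n} = I_{[u]_n}$ as well. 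Therefore $((a_0, \ldots, a_{n-1}), u, x) \mapsto (a_0 \cdots a_{n-1} u, x)$ is an isomorphism $A^n X \cong X$.

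For the converse, assume $A^n X \cong X$ and apply Theorem~\ref{thmax} with $A^n$ in the role of $A$: then $X$ is isomorphic to an order of the form $(A^n)^{\omega}(J_{[w]})$, where $[w]$ denotes a tail-equivalence class in $(A^n)^{\omega}$. Let $Fl : (A^n)^{\omega} \to A^{\omega}$ be the isomorphism that concatenates successive length-$n$ blocks, $((a_0, \ldots, a_{n-1}), (a_n, \ldots, a_{2n-1}), \ldots) \mapsto (a_0, a_1, a_2, \ldots)$. The key lemma is that for $w, x \in (A^n)^{\omega}$ one has $w \sim x$ in $(A^n)^{\omega}$ if and only if $Fl(w) \sim_n Fl(x)$ in $A^{\omega}$. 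Granting this, set $I_u := J_{Fl^{-1}(u)}$ for each $u \in A^{\omega}$; then $J_w = I_{Fl(w)}$ for all $w$, so $Fl$ lifts to an isomorphism of $(A^n)^{\omega}(J_w)$ with $A^{\omega}(I_u)$, and the key lemma converts ``$w \sim x \Rightarrow J_w = J_x$'' into ``$u \sim_n v \Rightarrow I_u = I_v$''. Thus $X$ is isomorphic to an order of the form $A^{\omega}(I_{[u]_n})$, as required.

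The only real content beyond bookkeeping is the key lemma. One direction is immediate: a meeting representation $w = r w'$, $x = s w'$ in $(A^n)^{\omega}$ flattens to $Fl(w) = Fl(r)\,Fl(w')$, $Fl(x) = Fl(s)\,Fl(w')$ with $|Fl(r)| = n|r| \equiv 0 \equiv n|s| = |Fl(s)| \pmod n$. For the other direction, suppose $Fl(w) = r u'$ and $Fl(x) = s u'$ with $|r| \equiv |s| \equiv k \pmod n$, where $0 \leq k < n$; extend both $r$ and $s$ on the right by the same first $n-k$ entries of $u'$ (none if $k=0$), obtaining offsets $r'$, $s'$ whose lengths are multiples of $n$ and a common tail $u''$. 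Because $|r'|$ and $|s'|$ are multiples of $n$, the block boundaries of $Fl(w)$ and $Fl(x)$ fall at their ends, so grouping $u''$ into blocks yields a single $w'' \in (A^n)^{\omega}$ that is simultaneously a tail-sequence of $w$ and of $x$; hence $w \sim x$. I expect this block-realignment step to be the main (though still routine) obstacle, exactly as the analogous verification was in the proof of Theorem~\ref{AsquaredX}.
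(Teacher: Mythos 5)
Your proof is correct and is exactly the generalization of the paper's proof of Theorem~\ref{AsquaredX} that the paper intends when it states Theorem~\ref{AnX} without proof: the forward direction via the $n$-flattening map and the converse via Theorem~\ref{thmax} applied to $A^n$ together with the block-concatenation isomorphism $Fl$. Your block-realignment argument correctly fills in the "straightforward to check" compatibility of $\sim$ on $(A^n)^{\omega}$ with $\sim_n$ on $A^{\omega}$.
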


The next theorem describes the relationship between the various orders $A^kX$, $1 \leq k \leq n-1$, in the case where $A^nX \cong X$. The notation $a^k$ denotes the sequence $aa\ldots a \in A^{<\omega}$ in which $a$ is repeated $k$ times. 

\theoremstyle{definition}
\newtheorem{revolving}[theorem]{Proposition}
\begin{revolving}\label{revolving}
Suppose $X$ is an order such that $A^nX \cong X$, so that $X$ is of the form $A^{\omega}(I_{[u]_n})$. For a fixed $k$, $1 \leq k \leq n-1$, let $Y = A^kX$. Then $Y \cong A^{\omega}(J_{[u]_n})$, where for all $u \in A^{\omega}$ and $a \in A$, we have $J_{[u]_n} = I_{[a^ku]_n}$. 
\end{revolving}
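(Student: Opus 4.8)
The plan is to run the proof of Proposition~\ref{switching}, now absorbing a block of $k$ coordinates into $A^{\omega}$ instead of a single one. Assume without loss of generality that $X=A^{\omega}(I_{[u]_n})$ on the nose (the isomorphism $X\cong A^{\omega}(I_{[u]_n})$ furnished by Theorem~\ref{AnX} can be composed into the final answer, exactly as in the discussion following Theorem~\ref{thmax}). Regard $Y=A^kX=A^k\times A^{\omega}(I_{[u]_n})$ as a replacement of the ordered set $A^k\times A^{\omega}$, namely $Y=(A^k\times A^{\omega})(I'_{(r,u)})$ with $I'_{(r,u)}=I_{[u]_n}$ independent of $r$, and apply the $k$-fold flattening map $(r,u)\mapsto ru$, which is an order isomorphism $A^k\times A^{\omega}\to A^{\omega}$. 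Because an isomorphism of the index order lifts to an isomorphism of the replacement (last paragraph of Section~2.1), this gives $Y\cong A^{\omega}(\tilde J_w)$ with $\tilde J_w=I'_{(w\upharpoonright k,\,\mathrm{tail}_k(w))}=I_{[\mathrm{tail}_k(w)]_n}$, where $\mathrm{tail}_k(w)$ denotes $w$ with its first $k$ entries deleted.

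It remains to see that $\tilde J_w$ depends only on the class $[w]_n$ and to identify that class. For the latter, the meeting representation of $\mathrm{tail}_k(w)$ and $a^{\,n-k}w$ with common tail $\mathrm{tail}_k(w)$ — the two initial sequences being $\emptyset$, of length $0$, and $a^{\,n-k}(w\upharpoonright k)$, of length $(n-k)+k=n\equiv 0\pmod n$ — shows $\mathrm{tail}_k(w)\sim_n a^{\,n-k}w$ for all $w$ and all $a\in A$. Hence $\tilde J_{[w]_n}=I_{[a^{\,n-k}w]_n}$, equivalently $J_{[a^ku]_n}=I_{[u]_n}$ for every $u$; this specializes to Proposition~\ref{switching} at $n=2$, where $n-k=k=1$. (Alternatively, one can prove the case $k=1$ word for word from the proof of Proposition~\ref{switching}, using the $n$-analogue $fl[A[u]_n]=[au]_n$ of Proposition~\ref{Au2}, and then iterate $k$ times, since $A^kX=A(A^{k-1}X)$ and $A^{k-1}X$ again satisfies $A^n(A^{k-1}X)\cong A^{k-1}X$.)

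The step requiring genuine care is the well-definedness claim: $w\sim_n w'$ implies $\mathrm{tail}_k(w)\sim_n\mathrm{tail}_k(w')$. I would prove it by fixing one meeting representation $w=rz$, $w'=sz$ with $|r|\equiv|s|\pmod n$ and splitting into cases according to whether $|r|$ and $|s|$ are at least $k$. When both are, deleting the first $k$ entries merely shortens $r$ and $s$ each by $k$, preserving the congruence of lengths. When, say, $|r|<k$, deleting the first $k$ entries of $w$ eats all of $r$ together with $k-|r|$ further entries of the common tail $z$, so $\mathrm{tail}_k(w)$ is itself a tail of $z$; a short bookkeeping with the congruence $|r|\equiv|s|$ then shows $\mathrm{tail}_k(w)$ and $\mathrm{tail}_k(w')$ are $n$-tail-equivalent. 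Note that a single meeting representation suffices for the conclusion, so the non-uniqueness of meeting representations for eventually periodic sequences causes no trouble here. Everything else — the reduction to $X=A^{\omega}(I_{[u]_n})$, lifting the flattening through the replacement, and reading off the shift — is routine, as one expects of the $n$-fold analogue of Propositions~\ref{Au2} and~\ref{switching}.
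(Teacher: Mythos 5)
Your argument is correct, and it is the intended one: the paper states Proposition~\ref{revolving} without proof, and what you have written is precisely the $k$-fold analogue of the proof of Proposition~\ref{switching} (absorb the $A^k$ factor through the $k$-fold flattening of the index order, then track where each $\sim_n$-class lands using the $n$-analogue of Proposition~\ref{Au2}). One simplification: the well-definedness step needs no case analysis. Since $w\sim_n w'$ implies $a^{n-k}w\sim_n a^{n-k}w'$ (prepend $a^{n-k}$ to both initial segments of a single meeting representation), the relation $\mathrm{tail}_k(w)\sim_n a^{n-k}w$ you already established immediately gives $\mathrm{tail}_k(w)\sim_n a^{n-k}w\sim_n a^{n-k}w'\sim_n \mathrm{tail}_k(w')$.

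The one thing you should say explicitly rather than leave buried in an ``equivalently'': the formula you derive, $J_{[u]_n}=I_{[a^{n-k}u]_n}$ (equivalently $I_{[u]_n}=J_{[a^{k}u]_n}$), is \emph{not} the formula printed in the statement, which reads $J_{[u]_n}=I_{[a^{k}u]_n}$. Your two versions are indeed equivalent to each other, but they agree with the printed one only when $[a^{k}u]_n=[a^{n-k}u]_n$, i.e.\ when $2k\equiv 0\pmod n$ --- in particular at $n=2$, $k=1$, which is why the discrepancy is invisible in Proposition~\ref{switching}. Your computation is the right one: a copy of $I_{[u]_n}$ sits in $Y$ over an index $ru$ with $|r|=k$, whose class is $[a^{k}u]_n$, so it is $J_{[a^{k}u]_n}$ that equals $I_{[u]_n}$; taking $n=3$, $k=1$ and a non-eventually-periodic $u$ with $I_{[u]_3}$, $I_{[au]_3}$, $I_{[a^2u]_3}$ pairwise non-isomorphic shows the printed formula literally fails (it describes the inverse of the intended cyclic shift). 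This is an indexing slip in the statement, not a flaw in your proof, and it is harmless downstream: Proposition~\ref{propra} only needs that some power of an $n$-revolving automorphism carries $[u]_n$ onto $[a^{n-k}u]_n$, and $f^{n-k}$ does. But your write-up should record the corrected exponent as such, rather than passing from $\tilde J_{[w]_n}=I_{[a^{n-k}w]_n}$ to the stated claim as if they were the same assertion.
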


In the above, the sequence $a^k$ may be replaced with any $k$-sequence $a_0a_1 \ldots a_{k-1}$ of elements of $A$. 

\theoremstyle{definition}
\newtheorem{ra}[theorem]{Definition}
\begin{ra}\label{ra}
An order automorphism $f: A^{\omega} \rightarrow A^{\omega}$ is called an $n$-\emph{revolving automorphism} (abbreviated $n$-r.a.) if for every $u \in A^{\omega}$ and $a \in A$ the image of $[u]_n$ under $f$ is $[au]_n$. 
\end{ra}

\theoremstyle{definition}
\newtheorem{propra}[theorem]{Proposition}
\begin{propra}\label{propra}
If $A^nX \cong X$ and there exists an $n$-revolving automorphism on $A^{\omega}$, then $AX \cong X$ as well (and hence $A^kX \cong X$ for all $k$).
\end{propra}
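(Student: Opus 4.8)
The plan is to mimic the proof of Proposition \ref{proppra}, with one extra step at the end: an $n$-revolving automorphism, used as a relabeling, identifies $X$ not with $AX$ but with $A^{n-1}X$, and one then closes the cycle with a single further multiplication. (For $n=2$ there is no extra step, and the argument is exactly that of Proposition \ref{proppra}.)

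So assume $A^nX \cong X$, and let $f\colon A^{\omega} \rightarrow A^{\omega}$ be an $n$-revolving automorphism, so that $f(u) \in [au]_n$ for every $u \in A^{\omega}$ and every $a \in A$. By Theorem \ref{AnX} we may write $X$ as $A^{\omega}(I_{[u]_n})$, and by Proposition \ref{revolving} applied with $k = n-1$ we may write $A^{n-1}X$ as $A^{\omega}(J_{[u]_n})$, where $J_{[u]_n} = I_{[a^{n-1}u]_n}$ for every $a \in A$. Define a map from $X$ to $A^{n-1}X$ by $(u, x) \mapsto (f(u), x)$. This is well-defined: the interval of points $(u, \cdot)$ in $X$ has type $I_{[u]_n}$, while the interval of points $(f(u), \cdot)$ in $A^{n-1}X$ has type $J_{[f(u)]_n} = I_{[a^{n-1}f(u)]_n}$; and since $f(u) \sim_n au$ we have $a^{n-1}f(u) \sim_n a^{n-1}(au) = a^nu \sim_n u$, because $|a^n| = n \equiv 0 \pmod{n}$. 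Hence the two interval types agree, and since $f$ is an order automorphism of $A^{\omega}$ the map is an order isomorphism (this is the way isomorphisms lift through replacements, as discussed at the end of Section 2). Therefore $X \cong A^{n-1}X$.

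Multiplying this isomorphism on the left by $A$ and using associativity of the lexicographical product gives $AX \cong A(A^{n-1}X) \cong A^nX$, and $A^nX \cong X$ by hypothesis, so $AX \cong X$. Iterating the isomorphism $AX \cong X$ then yields $A^kX \cong X$ for all $k \geq 1$ (and trivially $A^0X = X$), which is the parenthetical claim.

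I do not expect a genuine obstacle here: granting Theorem \ref{AnX} and Proposition \ref{revolving}, the only thing to be careful about is the bookkeeping modulo $n$ — making sure that the relabeling by $f$ lands on the copy $A^{n-1}X$ rather than $AX$, and recognizing that one further factor of $A$ completes a cycle of length $n$ and so returns to $X$ via the hypothesis $A^nX \cong X$. Alternatively one could relabel by $(u,x) \mapsto (f^{n-1}(u), x)$, which maps $X$ directly onto $AX$ since the iterate $f^{n-1}$ carries each class $[u]_n$ to $[a^{n-1}u]_n$; this avoids the extra multiplication at the cost of first checking the action of $f^{n-1}$ on $\sim_n$-classes.
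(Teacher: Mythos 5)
Your proof is correct; the paper states this proposition without proof, and your argument is exactly the intended generalization of the proof of Proposition \ref{proppra}: lift the $n$-revolving automorphism through the decompositions of Theorem \ref{AnX} and Proposition \ref{revolving}, with the modular bookkeeping correctly recognizing that the naive map $X \to AX$ only type-checks when $n=2$. One small simplification: rather than detouring through $A^{n-1}X$ (or iterating $f$), you can apply $f$ in the other direction, $(u,x)\mapsto(f(u),x)$ from $AX \cong A^{\omega}(J_{[u]_n})$ to $X \cong A^{\omega}(I_{[u]_n})$, since the interval $(u,\cdot)$ in $AX$ has type $J_{[u]_n}=I_{[au]_n}$ while the interval $(f(u),\cdot)$ in $X$ has type $I_{[f(u)]_n}=I_{[au]_n}$ because $f(u)\in[au]_n$; this gives $AX\cong X$ with a single application of $f$ and no extra multiplication.
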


Hence if $X^{n+1} \cong X$ for some $n \geq 1$ and there is an $n$-revolving automorphism on $X^{\omega}$, we have $X^2 \cong X$ as well. 

\subsection{A solution to the cube problem when $X$ is countable, without endpoints}

As a corollary to Theorem \ref{thmax}, we prove the characterization that was claimed in Section 2 for countable orders $X$ without endpoints such that $X^n \cong X$ for some $n>1$. A consequence is that the cube property holds for such orders. The argument differs substantially from the argument for the general case, and if desired this section can be skipped. 

\theoremstyle{definition}
\newtheorem{cntblnoend}[theorem]{Corollary}
\begin{cntblnoend}\label{cntblnoend}
Let $X$ be a countable linear order without endpoints. If $X^n \cong X$ for some $n > 1$, then $X$ is isomorphic to an order of the form $\mathbb{Q}(I_q)$, where for each $q$, there are densely many $p \in \mathbb{Q}$ with $I_q = I_p$. Hence for any countable order $L$ we have $LX \cong X$, and in particular $X^2 \cong X$. 
\end{cntblnoend}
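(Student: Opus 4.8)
The plan is to reduce, via Theorem~\ref{AnX}, to a statement about the tree $X^{\omega}$, and then exploit countability together with the absence of endpoints. Rewriting $X^n\cong X$ as $X^{n-1}\cdot X\cong X$, I would apply Theorem~\ref{AnX} with ``$A$'' taken to be $X$ and its generic exponent taken to be $n-1$, obtaining an isomorphism $X\cong X^{\omega}(I_{[u]_{m}})$ with $m=n-1\geq 1$, where $I_{[u]_m}$ depends only on the $\sim_m$-class of $u$. The case $X=\emptyset$ is trivial, so I may assume $X$ is infinite; then $X^{\omega}$ is uncountable while $X$ is countable, so the support $S=\{u\in X^{\omega}: I_{[u]_m}\neq\emptyset\}$ is a \emph{countable} subset of $X^{\omega}$, and it is a union of $\sim_m$-classes. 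Discarding the empty blocks gives $X\cong S(I_u)$, where $S$ carries the order induced from $X^{\omega}$ and every $I_u$ is nonempty. So it suffices to show that $S$ is a countable dense order without endpoints and that, within $S$, each of the (finitely or countably many) distinct orders among the $I_u$ occurs on a dense set of indices.

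Both of these follow from two facts about $X^{\omega}$, proved from the hypothesis that $X$ has no endpoints: (i) $X^{\omega}$ is dense and has no endpoints; and (ii) every $\sim_m$-class is dense in $X^{\omega}$. For (i), given $v<w$ in $X^{\omega}$ first differing at coordinate $k$, either some $x\in X$ satisfies $v_k<x<w_k$, in which case every sequence extending $(v\upharpoonright k)x$ lies strictly between $v$ and $w$; or $v_k,w_k$ are adjacent in $X$, in which case, picking $x'<w_{k+1}$ (possible since $X$ has no least element), every sequence extending $(v\upharpoonright k)(w_k)(x')$ lies strictly between $v$ and $w$. The absence of endpoints of $X$ similarly passes to $X^{\omega}$. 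For (ii), the same dichotomy produces in either case a finite sequence $p$, of some length $\ell$, all of whose extensions lie in the open interval $(v,w)$; the sequence $z=p(u_\ell,u_{\ell+1},\ldots)$ then lies in $(v,w)$, and since $z$ and $u$ share the tail $(u_\ell,u_{\ell+1},\ldots)$ after initial segments of the same length $\ell$, we have $z\sim_m u$. Thus $[u]_m$ meets every open interval of $X^{\omega}$.

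Granting (i) and (ii), $S$ contains a $\sim_m$-class and hence is dense in $X^{\omega}$; combined with the density of $X^{\omega}$ this makes $S$ dense in itself, and since $X^{\omega}$ has no endpoints neither does $S$. So $S$ is a countable dense linear order without endpoints, and Cantor's theorem gives an isomorphism $S\cong\mathbb{Q}$. Transporting the replacement along it yields $X\cong S(I_u)\cong\mathbb{Q}(I_q)$, and for each $q$ the set $\{p:I_p=I_q\}$ contains the image of the $\sim_m$-class through the preimage of $q$, which by (ii) is dense in $S$ and hence maps to a dense subset of $\mathbb{Q}$. This is exactly the asserted form. The last sentence of the corollary then follows from the observation already made in Section~2: for any countable order $L$, the order $L\mathbb{Q}(I_q)$ is again a countable dense shuffle of the same family $\{I_q\}$, so by Skolem's theorem $L\mathbb{Q}(I_q)\cong\mathbb{Q}(I_q)=X$; taking $L=X$ gives $X^2\cong X$.

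I expect the only real work to be in (ii): this is where the no-endpoints hypothesis on $X$ is essential, and some care is needed in the case where $v$ and $w$ are adjacent at their first point of difference (one must spend an extra coordinate before attaching a tail of $u$). The remaining ingredients — transporting replacements across isomorphisms, the elementary density bookkeeping for $S$, and the closing appeal to Skolem's theorem — are routine and already implicit in the material of Sections~2 and~3.
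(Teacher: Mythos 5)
Your proposal is correct and follows essentially the same route as the paper: decompose $X$ via the representation theorem of Section 3, use countability of $X$ to see that only countably many (necessarily countable) equivalence classes carry nonempty blocks, show from the absence of endpoints that the resulting support is a countable dense order without endpoints in which each class is dense, and finish with Cantor/Skolem. The only cosmetic difference is that you work in $X^{\omega}$ with $\sim_{n-1}$-classes via Theorem~\ref{AnX}, while the paper sets $A=X^{n-1}$ and works in $A^{\omega}$ with ordinary tail-equivalence classes via Theorem~\ref{thmax}; these decompositions are identified by the natural isomorphism $(X^{n-1})^{\omega}\cong X^{\omega}$, and your density argument (including the extra coordinate spent in the adjacent case) matches the paper's in substance.
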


\begin{proof}
Let $A = X^{n-1}$. The hypothesis is that $AX \cong X$. By Theorem \ref{thmax}, $X$ is isomorphic to an order of the form $A^{\omega}(I_{[u]})$. Note that since $A$ is countable, each equivalence class $[u]$ is countable: each $v \in [u]$ is of the form $ru'$ for some tail-sequence $u'$ of $u$, and there are only countably many $r \in A^{< \omega}$, and countably many tail-sequences $u'$. 

Since $X$ is countable, it must be that for all but countably many of the equivalence classes $[u]$ we have $I_{[u]} = \emptyset$. Let $k$ be the number of classes for which $I_{[u]}$ is nonempty, so that $1 \leq k \leq \omega$. Enumerate these classes as $C_i, i<k$, and let $I_i$ denote $I_{[u]}$ if $[u]=C_i$. Let $C = \bigcup_{i<k} C_i$. Since $C$ is the union of those classes $[u]$ for which $I_{[u]}$ is nonempty, we have that $X = C(I_u)$ where $I_u = I_i$ if $[u] = C_i$. We use the notation $X = C(I_i)$. 

Split $\mathbb{Q}$ into $k$-many dense subsets $\mathbb{Q} = \bigcup_{i<k} \mathbb{Q}_i$ and form the order $\mathbb{Q}(I_q)$, where if $q \in \mathbb{Q}_i$, then $I_q = I_i$. Denote this order by $\mathbb{Q}(I_i)$. If we can show that $X = C(I_i)$ is isomorphic to $\mathbb{Q}(I_i)$, the proof will be complete. To do this, it is sufficient to show that there is an isomorphism of $C$ with $\mathbb{Q}$ that sends each $C_i$ onto $\mathbb{Q}_i$. By Skolem's theorem, it is enough to show that $C$ is countable, dense and without endpoints, and each $C_i$ is dense in $C$.

Certainly $C$ is countable, since it is a countable union of countable classes. For the rest, note that since $X$ is without endpoints, $A = X^{n-1}$ is also without endpoints. Fix $v < w$ in $C$ and fix one of the $C_i$. Pick a representative $u \in C_i$, so that $C_i = [u]$. Since $v < w$ we may find an $n$ so that $v \upharpoonright n < w \upharpoonright n$ lexicographically. Since $A$ is without endpoints, we may find $a \in A$ such that $a > v_n$, and $b, c \in A$ such that $b < v_0$ and $c > w_0$. Let $x = (v \upharpoonright n)au$, $y = bu$, and $z = cu$. Then $y < v < x < w < z$ in $A^{\omega}$, and clearly $x, y, z \in [u] = C_i$. But this proves the claims: $C_i$ is dense in $C$, but further $C$ is dense and without a top or bottom point. The proof is complete. 
\end{proof}

Although we are going to show that the implication $X^n \cong X \implies X^2 \cong X$ holds for any linear order $X$, the advantage of the corollary is that it yields a complete classification in the countable, no endpoints case. Such an $X$ is of the form $\mathbb{Q}(I_i)$. Hence to specify $X$ it is enough to specify the number of parts $k$ in the partition $\mathbb{Q} = \bigcup_{i<k} \mathbb{Q}_i$ and the countable orders $I_i$. 

Under what other hypotheses can we carry out a similar proof to classify the orders satisfying the isomorphism $X^n \cong X$? We can always use Theorem \ref{thmax} to decompose such an $X$ into an order of the form $A^{\omega}(I_{[u]})$, where $A=X^{n-1}$. But the argument from there can break down in a number of ways. 

When $X$ is countable and has a single endpoint, the same proof goes through and yields an analogous classification. In the countable, two endpoints case, the proof does not carry over. The reason is that, while there is a unique (up to isomorphism) countable dense order with two endpoints, namely $Q = \mathbb{Q} \cap [0, 1]$, this order does not enjoy the same invariance under multiplication that $\mathbb{Q}$ enjoys. There exist countable orders $L$ with both endpoints such that $LQ \not \cong Q$. In fact it is easy to see we have non-isomorphism whenever $L$ is not dense. Moreover, this difference has consequences. Unlikes for $\mathbb{Q}$, there are right products of $Q$ that are not isomorphic to their squares. For example, $X=Q2$ is not isomorphic to its square. It follows that the analogous classification in this case fails, though it may be that some more detailed classification is possible. 

In the uncountable case, the proof breaks down completely, and the results of Section 5 show that no similar classification is possible. In many models of set theory, there do not even exist analogies to the order $\mathbb{Q}$ on higher cardinals. Even when such orders do exist, nothing like ``$X^n \cong X$ if and only if $X \cong \mathbb{Q}(I_i)$" is true for uncountable $X$. 

For example, under the continuum hypothesis there exists the saturated order $\mathbb{Q}_1$ of size $\aleph_1$. This order has a characterization a la Cantor's characterization of $\mathbb{Q}$, and enjoys many analogous properties to those of $\mathbb{Q}$. For one, if $L$ is any order of size at most $ \aleph_1$, then $L\mathbb{Q}_1 \cong \mathbb{Q}_1$, and in particular $\mathbb{Q}_1^2 \cong \mathbb{Q}_1$. Furthermore, we have the result ``any order $X$ of the form $\mathbb{Q}_1(I_i)$ is invariant under left multiplication by any $L$ of size at most $\aleph_1$. In particular, if the orders $I_i$ are of size at most $\aleph_1$, then for any $n$ we have $X^n \cong X$." (Here, each $I_i$ replaces densely many points in $\mathbb{Q}_1$.) But the converse is false outright: if $X$ is of size $\aleph_1$ and $X^n \cong X$ it need not be true that $X \cong \mathbb{Q}_1(I_i)$. For example, it is possible to use the methods of Section 5 to produce, without extra set theoretic hypotheses, an $X$ of size $\aleph_1$ such that $X^2 \cong X$ but $\omega_1X \not \cong X$.

\section{Parity-reversing automorphisms on $A^{\omega}$}

In this section we show how to construct parity-reversing automorphisms of $A^{\omega}$ for orders $A$ satisfying certain structural requirements. A consequence of our work is that if $X^3 \cong X$, then $X^{\omega}$ has a parity-reversing automorphism. By Proposition \ref{proppra} it follows that $X^2 \cong X$ as well. At the end of the section, we sketch how to generalize the argument to get the implication $X^n \cong X \implies X^2 \cong X$ for any order $X$ and $n \geq 2$, finishing the proof of the main theorem. 

For a sequence $u = (u_0, u_1, u_2, \ldots) \in A^{\omega}$, let $\sigma u$ denote the once shifted tail-sequence $(u_1, u_2, \ldots)$ and $\sigma^n u$ the $n$-times shifted tail-sequence $(u_n, u_{n+1}, \ldots)$.

As a first step toward constructing a p.r.a. for $A^{\omega}$, we show that any closed interval in $A^{\omega}$ with periodic endpoints of odd period has a parity-reversing automorphism. It is then possible to write down conditions under which a collection of these partial maps can be stitched together to get a full p.r.a. on $A^{\omega}$.

A technical note: if $r, s \in A^{<\omega}$ are finite sequences, it can be shown that if $\overline{r} < \overline{s}$ in $A^{\omega}$, then it always holds that $\overline{r} < \overline{rs} < \overline{sr} < \overline{s}$, even when one of the sequences is an initial sequence of the other. We will use this fact repeatedly. However, in every case in the subsequent work where we actually consider specific sequences $\overline{r} < \overline{s}$, it will be apparent that we actually have $\overline{r} < \overline{rs} < \overline{sr} < \overline{s}$. 

Now, suppose that we have finite sequences $r, s \in A^{<\omega}$ of lengths $m, n$ respectively, such that $\overline{r} < \overline{s}$ in $A^{\omega}$. The shift map $u \mapsto \sigma^m u$ restricted to the interval $[\overline{r}, \overline{rs}]$ is an isomorphism of this interval with the interval $[\overline{r}, \overline{sr}]$. The inverse map is given by $u \mapsto ru$, restricted to $[\overline{r}, \overline{sr}]$. Similarly the map $u \mapsto su$, when restricted to $[\overline{rs}, \overline{s}]$, is an isomorphism of this interval with $[\overline{sr}, \overline{s}]$ whose inverse is given by $u \mapsto \sigma^n u$. Thus we have the following proposition. 

\theoremstyle{definition}
\newtheorem{lemstd}[theorem]{Lemma}
\begin{lemstd}\label{lemstd}
Suppose $r, s \in A^{<\omega}$ are finite sequences, respectively of lengths $m, n$, such that $\overline{r} < \overline{s}$ in $A^{\omega}$. Then the map $f: [\overline{r}, \overline{s}] \rightarrow [\overline{r}, \overline{s}]$ defined by 
\begin{equation*}
f(u) = \left\{ \begin{array}{rl}
\sigma^mu & u \in [\overline{r}, \overline{rs}] \\
su & u \in [\overline{rs}, \overline{s}]
\end{array} \right.
\end{equation*}
is an order automorphism of the interval $[\overline{r}, \overline{s}]$. Its inverse is given by 
\begin{equation*}
f^{-1}(u) = \left\{ \begin{array}{rl}
ru & u \in [\overline{r}, \overline{sr}] \\
\sigma^nu & u \in [\overline{sr}, \overline{s}].
\end{array} \right.
\end{equation*}
\end{lemstd}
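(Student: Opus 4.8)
The plan is to establish directly the three facts sketched in the paragraph preceding the statement and then glue the two pieces together. The backbone is the collection of prefix identities $\overline{r} = r\overline{r}$, $\overline{rs} = r\overline{sr}$, $\overline{sr} = s\overline{rs}$, and $\overline{s} = s\overline{s}$, together with the technical note, which gives $\overline{r} < \overline{rs} < \overline{sr} < \overline{s}$. In particular the domain $[\overline{r},\overline{s}]$ is covered by $[\overline{r},\overline{rs}] \cup [\overline{rs},\overline{s}]$, and the target $[\overline{r},\overline{s}]$ is covered by $[\overline{r},\overline{sr}] \cup [\overline{sr},\overline{s}]$, each union overlapping in a single point.

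First I would show that $u \mapsto \sigma^m u$ is an order isomorphism of $[\overline{r},\overline{rs}]$ onto $[\overline{r},\overline{sr}]$ with inverse $v \mapsto rv$. The key point is that the two endpoints $\overline{r} = r\overline{r}$ and $\overline{rs} = r\overline{sr}$ both have first block equal to $r$, so every $u$ with $\overline{r} \le u \le \overline{rs}$ must have its first $m$ coordinates equal to $r$; otherwise $u$ would disagree with $r$ within the first $m$ entries and hence compare identically to both endpoints, which is impossible. Writing $u = ru'$ we get $\sigma^m u = u'$, and since prepending the fixed block $r$ is an order embedding, $\overline{r} \le u' \le \overline{sr}$ follows from $r\overline{r} \le ru' \le r\overline{sr}$; conversely $v \mapsto rv$ sends $[\overline{r},\overline{sr}]$ back into $[\overline{r},\overline{rs}]$ and is a two-sided inverse. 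Order-preservation is immediate: $\sigma^m$ preserves order on sequences sharing the common prefix $r$, and prepending preserves order. Repeating this argument verbatim with $s$ in place of $r$, using the endpoints $\overline{sr} = s\overline{rs}$ and $\overline{s} = s\overline{s}$, shows that $u \mapsto su$ is an order isomorphism of $[\overline{rs},\overline{s}]$ onto $[\overline{sr},\overline{s}]$ with inverse $\sigma^n$.

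It remains to glue. At the overlap point $\overline{rs}$ the two branches of $f$ agree: $\sigma^m\overline{rs}$ drops the leading block $r$ from $rsrsrs\cdots$, leaving $srsrsr\cdots = \overline{sr}$, and $s\overline{rs} = \overline{sr}$ as well, so $f$ is a well-defined function on $[\overline{r},\overline{s}]$. Its restriction to $[\overline{r},\overline{rs}]$ is an order isomorphism onto $[\overline{r},\overline{sr}]$ and its restriction to $[\overline{rs},\overline{s}]$ is an order isomorphism onto $[\overline{sr},\overline{s}]$; since the two domain pieces meet exactly at $\overline{rs}$ and cover $[\overline{r},\overline{s}]$, while the two range pieces meet exactly at $\overline{sr}$ and cover $[\overline{r},\overline{s}]$, the map $f$ is a bijection of $[\overline{r},\overline{s}]$ onto itself. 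It is order-preserving because each branch is, and because every point of the first branch's domain is $\le \overline{rs} \le$ every point of the second branch's domain, while every point of the first branch's range is $\le \overline{sr} \le$ every point of the second branch's range. Reading off the inverses of the two branches yields precisely the stated formula for $f^{-1}$, namely $u \mapsto ru$ on $[\overline{r},\overline{sr}]$ and $u \mapsto \sigma^n u$ on $[\overline{sr},\overline{s}]$. I do not anticipate a genuine obstacle; the only spot needing a little care is the prefix argument showing that membership in $[\overline{r},\overline{rs}]$ forces the first $m$ entries to be $r$, together with the routine observation that the shift map is order-preserving only once a common prefix is known to be shared.
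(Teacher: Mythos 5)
Your proposal is correct and follows essentially the same route as the paper, whose proof consists of the paragraph preceding the lemma (asserting the two partial isomorphisms $\sigma^m\colon[\overline{r},\overline{rs}]\to[\overline{r},\overline{sr}]$ and $u\mapsto su\colon[\overline{rs},\overline{s}]\to[\overline{sr},\overline{s}]$ with their inverses) together with the remark that $\sigma^m\overline{rs}=s\overline{rs}=\overline{sr}$ resolves the overlap. You merely supply the details the paper leaves implicit, in particular the prefix argument showing that every element of $[\overline{r},\overline{rs}]$ begins with $r$.
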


Note that there is no ambiguity in the definition of $f(\overline{rs})$ in the statement of the lemma, since $\sigma^m\overline{rs} = s\overline{rs} = \overline{sr}$. 

Given an interval of the form $[\overline{r}, \overline{s}]$ in $A^{\omega}$, we call the automorphism $f$ defined in Lemma \ref{lemstd} a \emph{standard map} on $[\overline{r}, \overline{s}]$. Standard maps are the essential tool in our proof of the cube property for $(LO, \times)$.

Standard maps are defined with respect to given sequences $r, s \in A^{<\omega}$, and an interval can carry more than one standard map. For example, if $r_0 = a$, $r_1 = aa$, and $s=b$ for some points $a<b$ in $A$, then we have $[\overline{r_0}, \overline{s}]=[\overline{r_1}, \overline{s}]=[\overline{a}, \overline{b}]$. However, the standard map on this interval defined with respect to the sequences $r_0, s$ is different than the standard map defined with respect to the sequences $r_1, s$. We adopt the convention that the phrase ``the standard map on $[\overline{r}, \overline{s}]$" means the one defined with respect to the sequences $r, s$. 

\theoremstyle{definition}
\newtheorem{lemstdpra}[theorem]{Lemma}
\begin{lemstdpra}\label{lemstdpra}
Fix $r, s \in A^{<\omega}$ of lengths $m, n$ respectively. If $m, n$ are both odd, then the standard map $f: [\overline{r}, \overline{s}] \rightarrow [\overline{r}, \overline{s}]$ is parity-reversing, in the sense that for all $u \in [\overline{r}, \overline{s}]$ and $a \in A$, we have $f(u) \in [au]_2$. 
\end{lemstdpra}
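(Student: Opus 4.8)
The plan is to verify directly from the piecewise definition of the standard map that applying $f$ changes the parity of the ``depth'' of a sequence relative to the periodic endpoints, and hence moves every $\sim_2$-class $[u]_2$ to $[au]_2$. Recall that by Proposition \ref{period}, since $\overline{r}$ and $\overline{s}$ are periodic with periods dividing $m$ and $n$ respectively, and $m,n$ are both odd, each of $\overline{r}$ and $\overline{s}$ has odd period; so $[\overline{r}]_2 = [\overline{r}] = [a\overline{r}]_2$ and likewise for $\overline{s}$. Thus the claim is automatic at the two endpoints, and the content is in the interior.

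First I would take $u \in [\overline{r}, \overline{rs}]$, so $f(u) = \sigma^m u$. I want to show $\sigma^m u \sim_2 au$ for any $a \in A$. Note that since $u \in [\overline{r},\overline{rs}]$, the sequence $u$ begins with $r$ — more precisely, I would argue that every $u$ in this subinterval has $r$ as an initial segment, because $\overline{r} \le u \le \overline{rs}$ and both bounds start with $r$ (here one uses $\overline{r} < \overline{rs}$, i.e. the interval is nondegenerate, together with the fact that $\overline{r}$ and $\overline{rs}$ agree on their first $m$ coordinates). Write $u = ru'$; then $\sigma^m u = u'$. Now $u = ru'$ and $\sigma^m u = u'$ is a meeting representation of $u$ and $\sigma^m u$ with finite parts of lengths $m$ and $0$; since $m$ is odd, $|m| \not\equiv |0| \pmod 2$, so $\sigma^m u \sim_2 au$ for any single letter $a$ (prepending one letter to the empty part makes the lengths congruent mod $2$). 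This is exactly the defining condition for $f(u) \in [au]_2$.

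Next I would handle $u \in [\overline{rs}, \overline{s}]$, where $f(u) = su$. Here $u$ and $su = f(u)$ already form a meeting representation with finite parts of lengths $0$ and $n$; since $n$ is odd, the same parity computation gives $su \sim_2 au$, i.e. $f(u) \in [au]_2$. The only subtlety is the overlap point $u = \overline{rs}$, which lies in both subintervals; but as the remark after Lemma \ref{lemstd} notes, $\sigma^m \overline{rs} = \overline{sr} = s\overline{rs}$, so the two formulas agree there and the two parity arguments are consistent. I should also double-check the edge case where the interval is a single point or $r$ is an initial segment of $s$ or vice versa, but in all these cases the structural facts (every element of the lower subinterval starts with $r$; $f$ on the upper subinterval is literally prepending $s$) still hold, so the argument is uniform.

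I do not anticipate a serious obstacle here; the one place requiring care is the claim that every $u \in [\overline{r}, \overline{rs}]$ genuinely has $r$ as an initial segment, which is what licenses writing $\sigma^m u = u'$ with $u = ru'$ and makes the length-$m$ part of the meeting representation honest. This follows because $\overline{r}$ and $\overline{rs}$ share the initial block $r$ of length $m$ and any sequence lexicographically between two sequences agreeing on their first $m$ entries must also agree with them there. Once that is in hand, both cases reduce to the single observation that prepending an odd-length finite string (namely $r$, read from $\sigma^m u$ back to $u$) or an odd-length finite string ($s$, from $u$ to $su$) toggles parity mod $2$, which is precisely parity-reversal.
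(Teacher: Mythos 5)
Your proof is correct and is essentially the paper's argument: in each of the two cases of the standard map, the obvious meeting representation between $au$ and $f(u)$ (with initial segments of lengths $m+1$ and $0$, respectively $1$ and $n$) witnesses $f(u)\sim_2 au$ because $m$ and $n$ are odd. The only difference is that you spell out details the paper leaves implicit; note that the claim that every $u\in[\overline{r},\overline{rs}]$ begins with $r$, while true, is not actually needed, since $\sigma^m u$ is a tail of $u$ regardless of what the first $m$ entries are.
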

\begin{proof}
Either $f(u) = \sigma^m u$ or $f(u) = su$. In either case, the obvious meeting representation between $au$ and $f(u)$ witnesses $f(u) \sim_2 au$. 
\end{proof}

\theoremstyle{definition}
\newtheorem{Abothpra}[theorem]{Theorem}
\begin{Abothpra}\label{Abothpra}
Suppose $A$ has both a left and right endpoint. Then $A^{\omega}$ has a parity-reversing automorphism. 
\end{Abothpra}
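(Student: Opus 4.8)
Let $0$ denote the left endpoint of $A$ and $1$ the right endpoint. The key observation is that $\overline{0} = (0,0,0,\ldots)$ is the minimum of $A^{\omega}$ and $\overline{1} = (1,1,1,\ldots)$ is the maximum, so $A^{\omega} = [\overline{0}, \overline{1}]$ is itself an interval of the form $[\overline{r}, \overline{s}]$ with $r = 0$ and $s = 1$, each a sequence of length $1$. Since $1$ is odd, Lemma \ref{lemstdpra} applies directly: the standard map $f \colon [\overline{0}, \overline{1}] \to [\overline{0}, \overline{1}]$ defined by $f(u) = \sigma u$ for $u \in [\overline{0}, \overline{01}]$ and $f(u) = 1u$ for $u \in [\overline{01}, \overline{1}]$ is an order automorphism of $A^{\omega}$ that is parity-reversing. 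This would complete the proof.

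\textbf{Details to check.} First I would verify that $\overline{0}$ really is the bottom of $A^{\omega}$ and $\overline{1}$ the top: if $u \in A^{\omega}$ and $u \neq \overline{0}$, let $n$ be least with $u_n \neq 0$; then $u_n > 0$ in $A$, so $u \upharpoonright n = \overline{0} \upharpoonright n$ but $u_n > 0 = \overline{0}_n$, whence $u > \overline{0}$; the argument for $\overline{1}$ is symmetric. Thus $A^{\omega} = [\overline{0}, \overline{1}]$ as a linear order, so Lemma \ref{lemstd} gives that $f$ is a well-defined order automorphism of all of $A^{\omega}$ (with inverse $f^{-1}(u) = 0u$ on $[\overline{0}, \overline{10}]$ and $f^{-1}(u) = \sigma u$ on $[\overline{10}, \overline{1}]$), and Lemma \ref{lemstdpra}, applied with $m = n = 1$ both odd, gives that $f$ is parity-reversing. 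I should also double-check the hypothesis $\overline{r} < \overline{s}$ needed for the lemmas, i.e. $\overline{0} < \overline{1}$, which holds since $0 < 1$ in $A$ (as $A$ has at least two points — if $A$ were a single point the claim is trivial since $A^{\omega}$ is a single point and the identity is vacuously a p.r.a.).

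\textbf{Main obstacle.} There is essentially no obstacle here: once one recognizes that having both endpoints means $A^{\omega}$ is literally one of the closed intervals $[\overline{r}, \overline{s}]$ on which a standard map lives, and that the relevant lengths $|r| = |s| = 1$ are odd, the theorem falls out of Lemmas \ref{lemstd} and \ref{lemstdpra}. The only thing to be careful about is the degenerate case $|A| = 1$, which must be handled (trivially) separately. The genuine difficulty — extending parity-reversing automorphisms to orders $A$ lacking one or both endpoints by stitching together standard maps on a well-chosen family of subintervals — is deferred to the subsequent results in this section.
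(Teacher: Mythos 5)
Your proof is correct and is essentially the paper's own argument: recognize that $A^{\omega} = [\overline{0}, \overline{1}]$ and apply the standard map on this interval, which is parity-reversing by Lemma \ref{lemstdpra} since $|r| = |s| = 1$ are odd (equivalently, $\overline{0}$ and $\overline{1}$ have period $1$). The extra details you check — that $\overline{0}$ and $\overline{1}$ really are the endpoints of $A^{\omega}$, and the degenerate case $|A| = 1$ — are fine and do not change the route.
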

\begin{proof}
Let $0$ denote the left endpoint of $A$, and $1$ denote the right endpoint. Then $A^{\omega}$ also has left and right endpoints, namely $\overline{0}$ and $\overline{1}$. That is, $A^{\omega} = [\overline{0}, \overline{1}]$. Thus the standard map on this interval is actually an automorphism of $A^{\omega}$. It is parity-reversing by Lemma \ref{lemstdpra}, since $\overline{0}$ and $\overline{1}$ are both of period 1.
\end{proof}

Thus if $A$ has both a left and right endpoint, and $X$ is an order such that $A^2X \cong X$, then $AX \cong X$ as well. Decomposing  $X$ as $A^{\omega}(I_{[u]_2})$ and $AX$ as $A^{\omega}(J_{[u]_2})$, the isomorphism is given by $(u, x) \mapsto (f(u), x)$, where $f$ is the standard map on $A^{\omega} = [\overline{0}, \overline{1}]$. 

In this case, we may alternatively apply Lindenbaum's version of the Schroeder-Bernstein theorem to get an isomorphism between $X$ and $AX$: $X$, which is isomorphic to $A^2X$, contains an initial copy of $AX$ by virtue of the fact that $A$ has a left endpoint, and $AX$ contains a final copy of $X$ since $A$ has a right endpoint. Thus $AX \cong X$ by Lindenbaum's theorem. If $A = X$, we recover Corollary \ref{endpoints}.

These approaches are actually the same. The isomorphism one gets from the proof of Lindenbaum's theorem (which is really just the classical proof of the Schroeder-Bernstein theorem), when viewed as an isomorphism of $A^{\omega}(I_{[u]_2})$ with $A^{\omega}(J_{[u]_2})$, turns out to be exactly the isomorphism $(u, x) \rightarrow (f(u), x)$, where $f$ is the standard map on $[\overline{0}, \overline{1}] = A^{\omega}$. In this sense, standard maps are generalized instances of the classical Schroeder-Bernstein bijection in the context of orders of the form $A^{\omega}$. 

The standard map $f$ fixes the endpoints $\overline{0}$ and $\overline{1}$ of $A^{\omega}$---necessarily so, since $f$ is an order automorphism. It is easily checked that these are the only fixed points of $f$. This does not change the fact that $f$ is parity-reversing: both $\overline{0}$ and $\overline{1}$ are periodic sequences of period 1, so that $[a\overline{0}]_2 = [\overline{0}]_2 = [\overline{0}]$ and $[a\overline{1}]_2 = [\overline{1}]_2 = [\overline{1}]$ for any $a \in A$. Thus the parity-reversing requirement on $f$ at these points is simply that $f(\overline{0}) \in [\overline{0}]$ and $f(\overline{1}) \in [\overline{1}]$, which certainly holds. Viewing $f$ as defining an isomorphism between some $X \cong A^2X \cong A^{\omega}(I_{[u]_2})$ and $AX \cong A^{\omega}(J_{[u]_2})$, we have that this isomorphism maps $I_{\overline{0}}$ onto $J_{\overline{0}}$, which is legitimate as these intervals are identical, as noted in the discussion following \ref{switching}. Similarly for $I_{\overline{1}}$ and $J_{\overline{1}}$. 

If $A$ does not have any endpoints, or only a single endpoint, then $A^{\omega}$ is not of the form $[\overline{r}, \overline{s}]$. Thus no standard map defines a p.r.a. on all of $A^{\omega}$. However, because they fix the endpoints of the intervals on which they are defined, standard maps can be stitched together to obtain automorphisms of longer intervals. For example, suppose $r, s, t \in A^{<\omega}$ are sequences such that $\overline{r} < \overline{s} < \overline{t}$. Let $f$ denote the standard map on $[\overline{r}, \overline{s}]$ and $g$ the standard map on $[\overline{s}, \overline{t}]$. Then $f \cup g$ is an automorphism of $[\overline{r}, \overline{t}]$. Its fixed points are exactly $\overline{r}, \overline{s}$, and  $\overline{t}$. If $|r|, |s|$, and $|t|$ are all odd, then $f$ and $g$, and thus $f \cup g$, are parity-reversing. 

The standard map $h$ on $[\overline{r}, \overline{t}]$ also serves as an automorphism of this interval. This map is different than the two-piece map $f \cup g$ (e.g. $h$ does not fix $\overline{s}$). The advantage of the piecewise construction is that it can be extended to get parity-reversing maps on intervals without endpoints, as well as orders without endpoints. To deal with such intervals, we introduce some terminology. 

The \emph{cofinality} of an interval $I$ is the minimum length $\lambda$ of an increasing sequence of points $\{ x_i: i < \lambda \} \subseteq I$ such that for every $y \in I$ there is $i < \lambda$ with $y \leq x_i$. Similarly, the \emph{coinitiality} of $I$ is minimum length $\kappa$ of a decreasing sequence in $I$ that eventually goes below every element of $I$. The cofinality is 1 if the interval has a maximal element, and if it has a minimal element the coinitiality is 1. When these cardinals are not 1, they are infinite and regular. An interval with coinitiality $\kappa$ and cofinality $\lambda$ is called a $(\kappa, \lambda)$-interval. We usually write these as ordinals, e.g. $\omega, \omega_1,$ etc., to emphasize that they refer to sequences. We refer to $(1, 1)$-intervals as $2$-intervals, $(1, \omega)$-intervals as $\omega$-intervals, $(\omega, 1)$-intervals as $\omega^*$-intervals, and $(\omega, \omega)$-intervals as $\mathbb{Z}$-intervals, since these intervals are, respectively, spanned by sequences of order type $2$, $\omega$, $\omega^*$, and $\mathbb{Z}$. Viewing the order $A$ as itself an interval, we may speak of $(\kappa, \lambda)$-orders, or $2$, $\omega$, $\omega^*$, and $\mathbb{Z}$-orders. 

A \emph{cover} for an order $A$ is a collection of disjoint intervals $\mathcal{C}=\{C_a: a \in A\}$ so that $a \in C_a$ for all $a$. Indexing by the elements of $A$ is for convenience: it is not assumed that $a \neq b$ implies $C_a \neq C_b$, since this would give only trivial covers, but only that either $C_a = C_b$ or $C_a \cap C_b = \emptyset$. 

A cover $\mathcal{C}$ is called a $\mathbb{Z}$-cover if every $C \in \mathcal{C}$ is a $\mathbb{Z}$-interval. Not every order $A$ admits a $\mathbb{Z}$-cover: if, for example, $A$ has a left endpoint $0$, then in any cover $\mathcal{C}$ for $A$, the interval $C_0$ cannot be a $\mathbb{Z}$-interval. Less trivially, suppose $A$ is a complete ($\omega_1$, $\omega_1$)-order, and $\mathcal{C}$ is a cover for $A$. Suppose $\mathcal{C}$ contains a $\mathbb{Z}$-interval $C$. This interval is open. Since $A$ is ``long" to both the left and right, $C$ is bounded. Since $A$ is complete, $C$ has a greatest lower bound $x$ and least upper bound $y$. The intervals $C_x$ and $C_y$ must be disjoint from $C$, hence $x$ must be the maximal element in $C_x$ and $y$ the minimal element in $C_y$. Thus neither is a $\mathbb{Z}$-interval, and it follows that $A$ admits no $\mathbb{Z}$-cover. 

A cover $\mathcal{C}$ is called a $\{\mathbb{Z}, \omega\}$-cover if every $C \in \mathcal{C}$ is either a $\mathbb{Z}$-interval or an $\omega$-interval. Similarly, $\mathcal{C}$ is called a $\{\mathbb{Z}, \omega^*\}$-cover if every $C \in \mathcal{C}$ is either a $\mathbb{Z}$-interval or $\omega^*$-interval. Given a cover $\mathcal{C}$, let $\mathcal{C}_X \subseteq \mathcal{C}$ denote the collection of intervals in $\mathcal{C}$ of type $X$. For example, if $\mathcal{C}$ is a $\{\mathbb{Z}, \omega\}$-cover, then $\mathcal{C} = \mathcal{C}_{\mathbb{Z}} \cup \mathcal{C}_{\omega}$.

\theoremstyle{definition}
\newtheorem{Aonenonepra}[theorem]{Theorem}
\begin{Aonenonepra}\label{Aonenonepra}
\,\ \
\begin{enumerate}
\item[1.] If $A$ admits a $\mathbb{Z}$-cover, then $A^{\omega}$ has a parity-reversing automorphism.
\item[2.] If $A$ has a left endpoint and admits a $\{\mathbb{Z}, \omega\}$-cover, then $A^{\omega}$ has a parity-reversing automorphism.
\item[3.] If $A$ has a right endpoint and admits a $\{\mathbb{Z}, \omega^*\}$-cover, then $A^{\omega}$ has a parity-reversing automorphism.
\end{enumerate}
Thus in any of these three cases, if $X$ is an order such that $A^2X \cong X$, then $AX \cong X$ as well. 
\end{Aonenonepra}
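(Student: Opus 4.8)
The strategy is to build a parity-reversing automorphism of $A^\omega$ by partitioning $A^\omega$ into intervals whose endpoints are periodic sequences of odd period, putting a standard map on each piece, and gluing. Lemma \ref{lemstdpra} tells us that each such standard map is parity-reversing, and since standard maps fix the endpoints of their defining intervals, the union of the pieces is again an order automorphism; being parity-reversing is a pointwise condition, so the glued map is parity-reversing as soon as each piece is. The role of the cover $\mathcal{C} = \{C_a : a \in A\}$ is precisely to lift to such a partition of $A^\omega$: if $C_a$ has endpoints (or limiting endpoints) $p, q \in A$, then the interval $[\bar p, \bar q] \subseteq A^\omega$ (or a half-open/unbounded variant) is the piece we attach to $C_a$, and the hypothesis on the \emph{type} of $C_a$ is exactly what is needed to guarantee that this piece can be spanned by standard maps between consecutive periodic points of period 1. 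Since $\bar p$ has period $1$, oddness of the period is automatic, so Lemma \ref{lemstdpra} applies verbatim.

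\emph{Case 1 ($\mathbb{Z}$-cover).} Each $C_a$ is a $\mathbb{Z}$-interval, so it is open with coinitiality and cofinality $\omega$. Choose a cofinal increasing $\omega$-sequence and a coinitial decreasing $\omega$-sequence of points $\ldots < a_{-2} < a_{-1} < a_0 < a_1 < a_2 < \ldots$ in $C_a$. These points lift to a $\mathbb{Z}$-indexed sequence $\ldots < \overline{a_{-1}} < \overline{a_0} < \overline{a_1} < \ldots$ of period-$1$ points in $A^\omega$, and the interval of $A^\omega$ sitting ``over'' $C_a$ — namely the set of $u \in A^\omega$ with $u_0 \in C_a$ — is exactly the union of the closed intervals $[\overline{a_i}, \overline{a_{i+1}}]$, $i \in \mathbb{Z}$, since the $\overline{a_i}$ are cofinal and coinitial in it. Put the standard map on each $[\overline{a_i}, \overline{a_{i+1}}]$ and take the union over $i$ and over $a$; because the blocks $\{u : u_0 \in C_a\}$ partition $A^\omega$ (the $C_a$ partition $A$) and each block is itself partitioned by the $[\overline{a_i},\overline{a_{i+1}}]$, the union is a well-defined automorphism of $A^\omega$, parity-reversing by Lemma \ref{lemstdpra}.

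\emph{Cases 2 and 3.} These are the same construction with the pieces of the cover that are $\omega$-intervals (resp.\ $\omega^*$-intervals) handled specially, using the global endpoint of $A$. If $A$ has a left endpoint $0$, then $\overline{0}$ is the left endpoint of $A^\omega$; an $\omega$-interval $C_a$ is $(1,\omega)$, so it has a least element $\ell$ and a cofinal increasing $\omega$-sequence $\ell = a_0 < a_1 < \ldots$, and the block over $C_a$ is $\bigcup_{i\ge 0}[\overline{a_i}, \overline{a_{i+1}}]$, a half-line closed on the left — fine, since we only ever needed closed intervals with periodic endpoints. The leftmost block (the one containing $0$) begins at $\overline{0}$, so nothing is lost at the bottom; all remaining pieces of the $\{\mathbb{Z},\omega\}$-cover are $\mathbb{Z}$-intervals and are treated as in Case 1. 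Case 3 is the mirror image, using a right endpoint $1$ and $\omega^*$-intervals. In every case the glued map is an order automorphism of $A^\omega$ (the pieces partition $A^\omega$ and agree — trivially — at shared periodic endpoints), and it is parity-reversing since each piece is by Lemma \ref{lemstdpra}. The final clause, that $A^2X \cong X$ implies $AX \cong X$, is then immediate from Proposition \ref{proppra}.

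\emph{Main obstacle.} The delicate point is the bookkeeping that the lifted intervals $[\bar p, \bar q]$ genuinely tile $A^\omega$: one must check that for a $\mathbb{Z}$-interval $C_a$ with chosen bi-infinite sequence $(a_i)_{i\in\mathbb{Z}}$, every $u$ with $u_0 \in C_a$ lies in exactly one $[\overline{a_i},\overline{a_{i+1}}]$ — this uses that if $a_i < u_0 < a_{i+1}$ in $A$ then $\overline{a_i} < u < \overline{a_{i+1}}$ in $A^\omega$ (comparison on the first coordinate), and that if $u_0 = a_i$ then $u$ falls in the boundary shared by two adjacent pieces, where both standard maps agree by fixing $\overline{a_i}$ — together with the cofinality/coinitiality of $(a_i)$ inside $C_a$ to rule out $u$ escaping all the pieces. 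The interface between distinct blocks (over distinct $C_a$, $C_b$ with $C_a < C_b$ adjacent) requires noting that if $C_a$ has no greatest element and $C_b$ no least, there is simply a gap in $A$ between them, hence a gap between the corresponding blocks of $A^\omega$, and no compatibility condition is needed; if one of them does have an endpoint meeting the other, that shared point is a period-$1$ sequence fixed by both adjacent standard maps. Once this is laid out carefully the theorem follows with no further computation.
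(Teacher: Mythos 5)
Your Case 1 is essentially the paper's argument and is sound: for a $\mathbb{Z}$-interval $C$ spanned by $(a_i)_{i\in\mathbb{Z}}$, every $u$ with $u_0 \in C$ does land in some $[\overline{a_i},\overline{a_{i+1}}]$, because when $u_0 = a_i$ but $u < \overline{a_i}$ (some later coordinate of $u$ drops below $a_i$) the point $\overline{a_{i-1}}$ is still available beneath $u$. The genuine gap is in Cases 2 and 3. You assert that apart from the leftmost piece, ``all remaining pieces of the $\{\mathbb{Z},\omega\}$-cover are $\mathbb{Z}$-intervals.'' Nothing in the definition of such a cover licenses this, and it is false in general: take $A = \omega^2$ ($\omega$ copies of $\omega$), which has a left endpoint and is covered by $\omega$-many $\omega$-intervals, while admitting no $\mathbb{Z}$-intervals whatsoever since it is a well-order. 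For a non-leftmost $\omega$-interval $C$ with least element $\ell > 0$ and spanning sequence $\ell = a_0 < a_1 < \cdots$, the block $\{u : u_0 \in C\}$ is \emph{not} $\bigcup_{i\geq 0}[\overline{a_i},\overline{a_{i+1}}]$: any $u$ with $u_0 = \ell$ and some later coordinate below $\ell$ satisfies $\ell\overline{0} \leq u < \overline{\ell} = \overline{a_0}$, so the entire nonempty interval $[\ell\overline{0}, \overline{\ell})$ escapes every piece of your tiling and your glued map is undefined there. Your ``interface'' remark does not rescue this, because $\overline{\ell}$ is not the left endpoint of the block over $C$; its left endpoint is $\ell\overline{0}$, which is only eventually periodic, so no standard map covers it directly. (The same conflation of ``$u_0 = a_i$'' with ``$u = \overline{a_i}$'' is harmless in Case 1 but is exactly what hides this hole in Case 2.)

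The paper's proof fills precisely this hole. For each $\omega$-interval $C$ in the cover with left endpoint $x = x_0^C > 0$, it adjoins the extra interval $[x\overline{0}, \overline{x}]$ to the decomposition of $A^{\omega}$ and equips it with the automorphism $g^{-1}\circ f\circ g$, where $g$ is the shift $u \mapsto \sigma u$ (an isomorphism of $[x\overline{0},\overline{x}]$ onto $[\overline{0},\overline{x}]$) and $f$ is the standard map on $[\overline{0},\overline{x}]$; one then checks that this conjugate is still parity-reversing. This is where the hypothesis that $A$ has a left endpoint is genuinely used --- not merely to anchor the leftmost block, but to supply the sequence $x\overline{0}$ closing off the bottom of \emph{every} $\omega$-block. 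Your argument needs this additional construction (and its mirror image for Case 3) to go through.
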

\begin{proof}
(1.) Assume first that $A$ has a $\mathbb{Z}$-cover $\mathcal{C}$. It follows that $A$ has no endpoints. For every $C \in \mathcal{C}$, fix a $\mathbb{Z}$-sequence $\ldots < x^C_{-1} < x^C_0 < x^C_1 < x^C_2 < \ldots$ spanning $C$. To each of the points $x_k^C$ in $A$ there is the corresponding periodic sequence $\overline{x}_k^C$ in $A^{\omega}$. For $C, D \in \mathcal{C}$ and $k, l \in \mathbb{Z}$, the intervals $[x_k^C, x_{k+1}^C)$ and $[x_l^D, x_{l+1}^D)$ intersect if and only if they are identical. The same is true of the corresponding intervals $[\overline{x}_k^C, \overline{x}_{k+1}^C)$ and $[\overline{x}_l^D, \overline{x}_{l+1}^D)$ in $A^{\omega}$. 

These intervals actually cover $A^{\omega}$, that is, for every $u \in A^{\omega}$ there is a unique $C \in \mathcal{C}$ and $k \in \mathbb{Z}$ such that $u \in [\overline{x}_k^C, \overline{x}_{k+1}^C)$. To see this, note that since the intervals $C$ cover $A$, $u$'s first entry $u_0$ (viewed as a point in $A$) falls in one of the $C$. There are two possibilities: either $x_k^C < u_0 < x_{k+1}^C$ for some $k \in \mathbb{Z}$, or $u_0 = x_k^C$ for some $k \in \mathbb{Z}$. In the first case, we have that $\overline{x}_k^C < u < \overline{x}_{k+1}^C$ so that $u \in [\overline{x}_k^C, \overline{x}_{k+1}^C)$. In the second, either $\overline{x}_{k-1}^C < u < \overline{x}_k^C$ or $\overline{x}_k^C \leq u < \overline{x}_{k+1}^C$ depending on whether the first entry of $u$ differing from $x_k^C$ (if it exists) is greater or less than $x_k^C$. Thus either $u \in [\overline{x}_{k-1}^C, \overline{x}_{k}^C)$ or $u \in [\overline{x}_k^C, \overline{x}_{k+1}^C)$. 

We have shown that 
\[
A^{\omega} = \bigcup_{\substack{k \in \mathbb{Z} \\ C \in \mathcal{C}}} [\overline{x}_k^C, \overline{x}_{k+1}^C].
\]
The intervals in this union are pairwise disjoint unless they are consecutive, in which case they share a single endpoint.  

For every $k \in \mathbb{Z}$ and $C \in \mathcal{C}$, let $f_k^C$ denote the standard map on $[\overline{x}_k^C, \overline{x}_{k+1}^C]$. Since the endpoints of this interval are of period 1, we have that $f_k^C$ is parity-reversing.

The function 
\[ 
f = \bigcup_{\substack{k \in \mathbb{Z} \\ C \in \mathcal{C}}} f_k^C
\]
is well-defined, since the domains of two different $f_k^C$ share at most one point and at that point the functions agree. Since each $f_k^C$ is a parity-reversing automorphism of $[\overline{x}_k^C, \overline{x}_{k+1}^C]$ and $A^{\omega}$ is the union of these intervals, $f$ is a p.r.a. for $A^{\omega}$. \,\ \\

(2.) Now assume that $A$ has a left endpoint $0$ and a $\{\mathbb{Z}, \omega\}$-cover $\mathcal{C}$. Then $A$ has no right endpoint. The argument is similar to the previous one, but with a catch. For every $C \in \mathcal{C}_{\mathbb{Z}}$, fix a sequence $\ldots < x^C_{-1} < x^C_0 < x^C_1 < x^C_2 < \ldots$ spanning $C$. Similarly, for every $C \in \mathcal{C}_{\omega}$ fix a sequence $x^C_0 < x^C_1 < x^C_2 < \ldots$ spanning $C$, where $x_0^C$ is the left endpoint of $C$. 

As before, the intervals $[\overline{x}_k^C, \overline{x}_{k+1}^C)$ are pairwise disjoint. The difference is that now there may be points in $A^{\omega}$ that do not fall in any of these intervals.

To see this, suppose $u \in A^{\omega}$. Then there is a unique $C \in \mathcal{C}$ and $k$ such that $x_k^C \leq u_0 < x_{k+1}^C$. If in fact $x_k^C < u_0 < x_{k+1}^C$, then certainly $u \in [\overline{x}_k^C, \overline{x}_{k+1}^C)$. So suppose $u_0 = x_k^C$. If either $C \in \mathcal{C}_{\mathbb{Z}}$, or $C \in \mathcal{C}_{\omega}$ and $k > 0$, then $u$ is contained in either $[\overline{x}_{k-1}^C, \overline{x}_k^C)$ or $[\overline{x}_k^C, \overline{x}_{k+1}^C)$, depending on whether the first entry of $u$ differing from $x_k^C$ (if it exists) is greater than or less than $x_k^C$.  

So suppose $u_0 = x_0^C$ for some $C \in \mathcal{C}_{\omega}$. Then either $u \geq \overline{x}_0^C$ or $u < \overline{x}_0^C$. In the first case we have $u \in [\overline{x}_0^C, \overline{x}_1^C)$. 

The issue occurs in the second case when $u < \overline{x}_0^C$. Assume we are in this case, and for notational simplicity denote $x_0^C$ by $x$. It must be then, that $x$ (viewed as a point in $A$) is greater than the left endpoint $0$, and further that $u_n < x$, where $u_n$ is the leftmost entry of $u$ differing from $x$. However, since $u_0 = x$ it must be that $u \geq x000\ldots = x\overline{0}$. Thus $u \in [x\overline{0}, \overline{x})$. This interval is disjoint from all of the $[\overline{x}_k^C, \overline{x}_{k+1}^C)$, and by the same argument, any $v \in A^{\omega}$ that is not contained in one of the $[\overline{x}_k^C, \overline{x}_{k+1}^C)$ must be contained in an interval of this form. 

Thus
\[
A^{\omega} = \bigcup_{\substack{k \in \mathbb{Z} \\ C \in \mathcal{C}_{\mathbb{Z}}}} [\overline{x}_k^C, \overline{x}_{k+1}^C] \,\ \cup \bigcup_{\substack{k \in \mathbb{\omega} \\ C \in \mathcal{C}_{\omega}}} [\overline{x}_k^C, \overline{x}_{k+1}^C] \,\ \cup \bigcup_{\substack{x = x_0^C \\ C \in \mathcal{C} _{\omega}}} [x\overline{0}, \overline{x}].
\]
These intervals are pairwise disjoint up to endpoints. The intervals $[\overline{x}_k^C, \overline{x}_{k+1}^C]$ have parity-reversing standard maps $f_k^C$. If we can show that for every interval $[x\overline{0}, \overline{x}]$, there is a parity-reversing automorphism $f_x: [x\overline{0}, \overline{x}] \rightarrow [x\overline{0}, \overline{x}]$, then the map
\[
f = \bigcup_{\substack{k \in \mathbb{Z} \\ C \in \mathcal{C}_{\mathbb{Z}}}} f_k^C \,\ \cup \bigcup_{\substack{k \in \mathbb{\omega} \\ C \in \mathcal{C}_{\omega}}} f_k^C \,\ \cup \bigcup_{\substack{x = x_0^C \\ C \in \mathcal{C} _{\omega}}} f_x 
\]
is a parity-reversing automorphism of $A^{\omega}$. 

So fix $C \in \mathcal{C}_{\omega}$ and let $x = x_0^C$. If $C$ is the leftmost interval in $\mathcal{C}$, then $x = 0$ and $[x\overline{0}, \overline{x}] = \{\overline{0}\}$ is just the left endpoint of $A^{\omega}$. In this case let $f_x$ be the map that fixes $\overline{0}$ and is undefined elsewhere. 

Otherwise $x > 0$. The interval $[x\overline{0}, \overline{x}]$ has a periodic right endpoint, but only an eventually periodic left endpoint. We have not defined a standard map for such an interval. Note however that the shift map $u \mapsto \sigma u$ restricted to $[x\overline{0}, \overline{x}]$ is an isomorphism of this interval with $[\overline{0}, \overline{x}]$. Let $g$ denote this shift map, and let $f$ denote the standard map on $[\overline{0}, \overline{x}]$. Then $f_x = g^{-1} \circ f \circ g$ is a parity-reversing automorphism of $[x\overline{0}, \overline{x}]$, as desired.  

Now $f$, as defined above, is a p.r.a. for $A^{\omega}$.  \,\ \\

(3.) Finally, suppose $A$ has a right endpoint and admits a $\{\mathbb{Z}, \omega^*\}$-cover. Then $A^*$ has a left endpoint and admits a $\{\mathbb{Z}, \omega\}$-cover. Thus $(A^*)^{\omega}$, which is isomorphic to $(A^{\omega})^*$, has a p.r.a. $f^*$. The corresponding automorphism $f$ on $A^{\omega}$ is still parity-reversing since the requirement $f(u) \in [au]_2$ does not depend on the ordering of $A^{\omega}$, but only on the underlying set of points. 
\end{proof}

Suppose for the moment that $A$ is an order without endpoints. The content of the theorem is that if $A$ has a $\mathbb{Z}$-cover, then $A^{\omega}$ can be covered disjointly (up to endpoints) by intervals of the form $[\overline{x}, \overline{y}]$, with $x, y \in A$. Since we have parity-reversing standard maps on these intervals, we can use this decomposition to build a p.r.a. on all of $A^{\omega}$. 

The essence of what can go wrong when $A$ does not admit a $\mathbb{Z}$-cover can be illustrated by an example. Suppose that $A$ is a complete $(\omega_1, \omega_1)$-order. We have seen that $A$ has no $\mathbb{Z}$-cover. We might still attempt to build a p.r.a. for $A^{\omega}$ piecewise: begin with some $x_0 < x_1$ in $A$ and consider the corresponding periodic points $\overline{x}_0 < \overline{x}_1$ in $A^{\omega}$. Put the standard map on $[\overline{x}_0, \overline{x}_1]$. Then pick some $\overline{x}_2 > \overline{x}_1$, and so on. After $\omega$-many steps we will have defined a p.r.a. on the interval spanned by $\overline{x}_0 < \overline{x}_1 < \overline{x}_2 < \ldots$ . 

Now, since $A$ is complete and of cofinality $\omega_1$, the sequence $x_0 < x_1 < x_2 < \ldots$ is bounded in $A$ and converges to some point $x$. However, the sequence of $\overline{x}_n$ does \emph{not} converge to $\overline{x}$ in $A^{\omega}$. Rather, there is a nonempty open interval $I$ sitting above all the $\overline{x}_n$ and below $\overline{x}$. This interval $I$ consists of points $u$ with first coordinate $u_0=x$ but with some later coordinate $u_i < x$. Notice that there are no  periodic points of period 1 in this interval. We might hope to bridge this gap (perhaps up to some collection of legally fixable points) with intervals whose endpoints are only eventually periodic, as we did in the proof of (2.) of Theorem \ref{Aonenonepra}. There, however, it was crucial for the argument that $A$ had a left endpoint $0$.

It turns out it is impossible to bridge this gap, in the sense that any automorphism on $A^{\omega}$ extending the partial automorphism described above cannot be parity-reversing on $I$. Moreover, no alternative construction works. We will show in Section 6 that any complete $(\omega_1, \omega_1)$-order does not admit a parity-reversing automorphism. 

Here is an immediate corollary of Theorem \ref{Aonenonepra}.

\theoremstyle{definition}
\newtheorem{cntblwidth}[theorem]{Corollary}
\begin{cntblwidth}\label{cntblwidth}
If $A$ is a $2$-order, $\omega$-order, $\omega^*$-order, or $\mathbb{Z}$-order, then $A^{\omega}$ has a parity-reversing automorphism. Thus for any order $X$, if $A^2X \cong X$ then $AX \cong X$.
\end{cntblwidth}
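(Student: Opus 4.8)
The plan is to read off all four cases directly from the results already established, choosing in each case the most economical cover of $A$.

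First I would dispose of the $2$-order case separately: a $2$-order is by definition a $(1,1)$-order, hence has both a left and a right endpoint, so Theorem \ref{Abothpra} immediately gives that $A^{\omega}$ has a parity-reversing automorphism. (One could try to route this case through Theorem \ref{Aonenonepra} as well, but a finite $2$-order such as $A = 2$ admits no $\{\mathbb{Z},\omega\}$-cover, so it is cleanest to invoke Theorem \ref{Abothpra} here directly.)

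For the remaining three cases I would use the \emph{trivial cover} $\mathcal{C} = \{A\}$, that is, take $C_a = A$ for every $a \in A$. This is a legitimate cover in the sense of the definition: the family of intervals is pairwise disjoint because there is only one of them, and $a \in C_a$ holds for all $a$. Now match the types: if $A$ is a $\mathbb{Z}$-order then $\mathcal{C}$ is a $\mathbb{Z}$-cover, so part (1) of Theorem \ref{Aonenonepra} applies; if $A$ is an $\omega$-order then $A$ has a left endpoint and $\mathcal{C}$ is a $\{\mathbb{Z},\omega\}$-cover, so part (2) applies; and if $A$ is an $\omega^*$-order then $A$ has a right endpoint and $\mathcal{C}$ is a $\{\mathbb{Z},\omega^*\}$-cover, so part (3) applies. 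In each of these cases $A^{\omega}$ has a parity-reversing automorphism.

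Finally, the concluding assertion — that $A^2X \cong X$ implies $AX \cong X$ for every order $X$ — follows immediately from Proposition \ref{proppra} (equivalently, from the last line of Theorem \ref{Aonenonepra}). I do not expect any real obstacle here: the only things to check are that the one-element cover is permitted by the definition of a cover and that each of the four order types is paired with the correct hypothesis. All of the substantive content has already been carried out in Theorems \ref{Abothpra} and \ref{Aonenonepra}.
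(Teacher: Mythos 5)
Your proposal is correct and follows essentially the same route as the paper: the $2$-order case is handled by Theorem \ref{Abothpra}, and the other three cases use the trivial cover $\mathcal{C}=\{A\}$ together with the appropriate part of Theorem \ref{Aonenonepra}. The case-to-hypothesis matching ($\omega$-order with left endpoint to part (2), $\omega^*$-order with right endpoint to part (3), $\mathbb{Z}$-order to part (1)) is exactly as in the paper.
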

\begin{proof}
When $A$ is a $2$-order, this is simply a restatement of Theorem \ref{Abothpra}. In the other three cases $A$ has, respectively, a $\{\mathbb{Z}, \omega\}$-cover, $\{\mathbb{Z}, \omega^*\}$-cover, or $\mathbb{Z}$-cover given by $\mathcal{C}=\{A\}$. 
\end{proof}

Among the orders satisfying the hypotheses of the corollary are all countable orders, since any countable order has a cofinality and coinitiality at most $\omega$. Thus for any countable $A$ and any $X$ we have $A^2X \cong X \implies AX \cong X$. This gives in particular that any countable $X$ isomorphic to its cube is isomorphic to its square, recovering Corollaries \ref{endpoints} and \ref{cntblnoend} and also giving the result for countable orders with a single endpoint. 

The order $\mathbb{R}$ of the real numbers gives an example of an uncountable $\mathbb{Z}$-order. Hence $\mathbb{R}^2 X \cong X \implies \mathbb{R}X \cong X$ for any $X$. One of the simplest examples of an order that is not a $\mathbb{Z}$-order but admits a $\mathbb{Z}$-cover is $\omega_1\mathbb{Z}$. Here, the cover consists of the intervals $C_{\alpha} = \{(\alpha, z): z \in \mathbb{Z} \}$. Similarly, $\omega_1  \mathbb{R}$ admits a $\mathbb{Z}$-cover, since each copy of $\mathbb{R}$ is spanned by a $\mathbb{Z}$-sequence. 

The order $\omega_1$ has a left endpoint, but admits no $\{\mathbb{Z}, \omega\}$-cover, as can be shown by a similar argument to the one showing any complete $(\omega_1, \omega_1)$-order has no $\mathbb{Z}$-cover. Thus it does not follow from \ref{Aonenonepra} that there is a p.r.a. on $\omega_1^{\omega}$. It turns out that $\omega_1^{\omega}$ does have a p.r.a., though we will not prove this. However, if $A = \omega^*_1 + \omega_1$ is the order obtained by putting a copy of $\omega_1$ to the right of a copy of $\omega^*_1$, then $A$ is a complete $(\omega_1, \omega_1)$-order. It follows from the results of Section 6 that there is no p.r.a. on $A^{\omega}$. We will show in fact that there exists an $X$ such that $A^2X \cong X$ but $AX \not \cong X$. 

It remains to show that any order $X$ that is isomorphic to its cube admits the right kind of cover to get a p.r.a. on $X^{\omega}$. This is Theorem \ref{Xcubedcovers} below. When combined with Theorem \ref{Aonenonepra}, it shows that for any $X$ without endpoints or with a single endpoint, if $X^3 \cong X$ then $X^2 \cong X$. Since we have already dealt with the two endpoint case, we have $X^3 \cong X \implies X^2 \cong X$ for any $X$, finishing the proof of the cube property. 

\theoremstyle{definition}
\newtheorem{Xcubedcovers}[theorem]{Theorem}
\begin{Xcubedcovers}\label{Xcubedcovers}
Suppose $X$ is an order such that $X^3 \cong X$. 
\begin{enumerate}
\item[1.] If $X$ has no endpoints, then $X$ admits a $\mathbb{Z}$-cover.
\item[2.] If $X$ has a left endpoint but no right endpoint, then $X$ admits a $\{\mathbb{Z}, \omega\}$-cover. 
\item[3.] If $X$ has a right endpoint but no left endpoint, then $X$ admits a $\{\mathbb{Z}, \omega^*\}$-cover. 
\end{enumerate}
\end{Xcubedcovers}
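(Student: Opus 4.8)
The plan is to reduce, in three stages, to a purely structural fact about a product, using the representation theorem of Section 3. Throughout I ignore the case where $X$ has both endpoints, which is already Corollary \ref{endpoints}.

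\emph{Stage 1: pass to the square.} Multiplying $X^3 \cong X$ on the left by $X$ gives $X^4 \cong X^2$, so $Y := X^2$ satisfies $Y^2 \cong Y$; also $X \cong X^3 = YX$, so $X$ decomposes as $Y$-many consecutive copies of $X$, say $X = \bigsqcup_{a \in Y} X_a$ with each $X_a \cong X$ and $a < b \Rightarrow X_a < X_b$. I claim a cover of $Y$ of the appropriate type lifts to one of $X$. If $\{C_t : t \in Y\}$ is such a cover, then $\{\bigsqcup_{a \in C_t} X_a\}$ is a partition of $X$ into intervals, and each piece is isomorphic to $CX$ for $C = C_t$. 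A direct check of coinitiality and cofinality shows: $\mathrm{coi}(CX)$ equals $\mathrm{coi}(C)$ if $C$ has no least element and equals $\mathrm{coi}(X)$ otherwise, and $\mathrm{cof}(CX)$ equals $\mathrm{cof}(C)$ if both $C$ and $X$ have no greatest element. Since $Y$ has exactly the same endpoint situation as $X$ (for $X$ with at most one endpoint), $CX$ is of the same type ($\mathbb{Z}$, $\omega$, or $\omega^*$) as $C$ in each of the three cases. So it suffices to prove: \emph{if $Y^2 \cong Y$, then $Y$ admits a $\mathbb{Z}$-cover, a $\{\mathbb{Z},\omega\}$-cover, or a $\{\mathbb{Z},\omega^*\}$-cover according as $Y$ has no endpoints, only a left endpoint, or only a right endpoint.}

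\emph{Stage 2: represent $Y$ over a product, and peel endpoints.} Suppose first $Y$ has no endpoints. Since $YY \cong Y$, Theorem \ref{thmax} gives $Y \cong Y^{\omega}(I_{[u]})$, a replacement of $Y^{\omega}$ by orders $I_{[u]}$ constant on tail-equivalence classes. Because $Y$ has no endpoints, each tail-equivalence class is dense in $Y^{\omega}$ (the argument from Corollary \ref{cntblnoend}), so the nonempty $I_{[u]}$ occur densely. It is then routine that a $\mathbb{Z}$-cover $Y^{\omega} = \bigsqcup_j D_j$ lifts to a $\mathbb{Z}$-cover $\bigsqcup_j D_j(I_{[u]})$ of $Y$: each $D_j(I_{[u]})$ is again a $\mathbb{Z}$-interval, since $D_j$ has no endpoints and countable coinitiality and cofinality, and the nonempty replacement orders sit densely inside it. So in the no-endpoint case it suffices to $\mathbb{Z}$-cover $Y^{\omega}$. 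The one-endpoint cases reduce to this one: if $Y$ has a left endpoint $0$ and no right endpoint, write $Y \cong YY$ and split off the first copy, $Y \cong Y + Y'Y$ with $Y' = Y \setminus \{0\}$ having no endpoints, and treat the initial copy of $Y$ and the factor $Y'$ separately (symmetrically on the right).

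\emph{Stage 3: the combinatorial core (the hard part).} It remains to build a $\mathbb{Z}$-cover of $Y^{\omega}$ for $Y$ with no endpoints and $Y^2 \cong Y$; the issue is to show the completeness-style obstruction discussed after Theorem \ref{Aonenonepra} does not occur. First, because $Y$ has no endpoints, \emph{every point-cut} of $Y^{\omega}$ is two-sided countable: for $u \in Y^{\omega}$ the sequence $w^{(k)} = (u_0,\dots,u_{k-1}, v_k, c, c, \dots)$ with $v_k < u_k$ and $c$ fixed is increasing and cofinal in $(-\infty,u)$, so $(-\infty,u)$ has cofinality $\omega$, and dually $(u,\infty)$ has coinitiality $\omega$. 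The remaining work concerns the \emph{gaps} of $Y^{\omega}$: a coordinate-by-coordinate analysis shows a gap of $Y^{\omega}$ arises from a gap of $Y$ or from a consecutive pair in $Y$, and here one must invoke the self-similarity $Y^2 \cong Y$ (equivalently, lift back to $X^3 \cong X$) to exclude a gap of $Y^{\omega}$ that is uncountable on both sides and to guarantee enough ``$\omega\omega$-cuts'' to carry out a partition — this exclusion of the bad configuration is the main obstacle, and is exactly where the cube hypothesis is used. Granting it (so that every cut of $Y^{\omega}$ is two-sided countable), one assembles the $\mathbb{Z}$-cover by a transfinite recursion: maintain a partial partition into $\mathbb{Z}$-intervals and at each stage extend it past the next uncovered point, gluing finitely or countably many $\mathbb{Z}$-intervals across it using the countability of the relevant cofinalities and coinitialities; limit stages go through precisely because no two-sided-uncountable cut blocks them. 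Feeding the resulting cover back through Stages 2 and 1 — and then through Theorem \ref{Aonenonepra} and Proposition \ref{proppra} when the cube property is finally assembled — completes the argument.
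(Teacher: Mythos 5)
Your Stage 1 reduction (lifting a cover of $Y=X^2$ to a cover of $X$ via $X\cong YX$, with the cofinality/coinitiality bookkeeping) is fine, but the heart of the theorem is never proved. Stage 3 is explicitly conceded (``Granting it\dots''), and the mechanism you propose there is not available: it is \emph{not} true that $Y^2\cong Y$ (or $X^3\cong X$) excludes cuts of $Y^{\omega}$ that are uncountable on both sides. If $Y$ has coinitiality $\omega_1$ and contains a point $p$ such that $(-\infty,p)$ has cofinality $\omega_1$, then the cut of $Y^{\omega}$ between $\{u: u_0<p\}$ and $\{u: u_0\geq p\}$ is an $(\omega_1,\omega_1)$-gap; such $Y$ satisfying $Y^2\cong Y$ exist (under CH take $X=Y=\mathbb{Q}_1$, which satisfies $LX\cong X$ for all $L$ of size at most $\aleph_1$, has cofinality, coinitiality, and one-sided cofinalities at points equal to $\omega_1$, and has no endpoints). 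So the recursion you describe, whose limit stages ``go through precisely because no two-sided-uncountable cut blocks them,'' cannot be run, and you give no substitute argument that $Y^{\omega}$ admits a $\mathbb{Z}$-cover at all. A secondary problem: your reduction of the one-endpoint cases is circular and rests on a false claim --- ``treat the initial copy of $Y$ separately'' in $Y\cong Y+Y'Y$ is exactly the original problem again, and $Y'=Y\setminus\{0\}$ need not be endpoint-free (e.g.\ when $0$ has an immediate successor).

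For comparison, the paper never tries to cover a power $Y^{\omega}$ and never needs to control gaps of uncountable cofinality. It reads $X^3\cong X$ as \emph{right}-sided invariance $XA\cong X$ with $A=X^2$, and covers $X$ directly: fixing an isomorphism $f:X\to XA$ and iterating it, each $x\in X$ is surrounded by a nested chain of intervals $I_n^x\cong A^n$, whose union $I_{\infty}^x$ is isomorphic to a class $[u]_{\infty}$ of left-infinite $A$-sequences under eventual equality (Lemma \ref{XAcovers}); Lemma \ref{uinftypes} then computes that these classes are $\mathbb{Z}$-, $\{\mathbb{Z},\omega\}$-, or $\{\mathbb{Z},\omega^*\}$-type according to the endpoint configuration of $A$, which matches that of $X$. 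That argument needs no completeness or cofinality analysis of $X^{\omega}$ or $Y^{\omega}$, which is exactly the part your proposal leaves open.
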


The proof of the theorem is in two lemmas. Before we can state these lemmas, we need some more terminology. 

Up to this point we have only studied invariance of orders under $\emph{left}$ multiplication (by a fixed $A$ or power of $A$). Orders $X$ such that $X^n \cong X$ fit into this context since the isomorphism $X^n \cong X$ can be rewritten as $A^{n-1}X \cong X$, where $A=X$. However, such orders also display an invariance under right multiplication, since $X^n \cong X$ can just as well be rewritten $XA^{n-1} \cong X$, where again $A=X$. It is this right-sided invariance under multiplication that is needed to get the covers of $X$ in Theorem \ref{Xcubedcovers}.

Forgetting the isomorphism $X^3 \cong X$ for now, we consider the single power version of this right-sided invariance. That is, for an order $A$, we analyze the structure of orders $X$ such that $XA \cong X$. 

In the case of invariance under left multiplication by $A$, the space $A^{\omega}$ of right-infinite sequences plays a crucial role. In the case of right-sided invariance, it is the space of left-infinite sequences that is relevant.

Let $A^{\omega^*} = \{(\ldots, a_2, a_1, a_0): a_i \in A, i \in \omega\}$ denote the set of left-infinite sequences of elements of $A$. Notice that we still index the entries of such sequences by elements in $\omega$. We copy over the notation from the right-sided case, reversing it when necessary. The letters $u, v, \ldots$ will now be used to denote elements of $A^{\omega^*}$. The $n$th entry of $u$ is still denoted $u_n$. 

Let $A^{<\omega^*} = \{(a_{n-1}, \ldots, a_1, a_0): a_i \in A, n \in \omega\}$ denote the set of left-growing finite sequences. The letters $r, s, \ldots$ will for now denote elements of $A^{<\omega^*}$. For $r = (a_{n-1}, \ldots, a_1, a_0) \in A^{<\omega^*}$ and $u = (\ldots, u_1, u_0) \in A^{\omega^*}$, we use $ur$ to denote the sequence $(\ldots, u_1, u_0, a_{n-1}, \ldots, a_1, a_0)$. 

It is impossible to order $A^{\omega^*}$ lexicographically, in the sense that two left-infinite sequences may not have a leftmost place in which they differ. If, however, two such sequences eventually agree, it is possible to compare them lexicographically. 

\theoremstyle{definition}
\newtheorem{eventequiv}[theorem]{Definition}
\begin{eventequiv}\label{eventequiv}
For $u, v \in A^{\omega^*}$, we say $u$ is \emph{eventually equal} to $v$, and write $u \sim_{\infty} v$, if there exists $N \in \omega$ such that for all $n > N$ we have $u_n = v_n$. 
\end{eventequiv}

Equivalently, $u \sim_{\infty} v$ if there exist finite sequences $r, s$ with $|r| = |s|$ and a sequence $u' \in A^{\omega^*}$ such that $u = u'r$ and $v = v's$. This is just the usual eventual equality relation, considered here for left-infinite sequences. The $\sim_{\infty}$-class of $u$ is denoted $[u]_{\infty}$. 

Observe that for a given $u \in A^{\omega^*}$, the class $[u]_{\infty}$ can be ordered lexicographically. If $u \sim_{\infty} v$, define $u < v$ if and only if $u_N < v_N$, where $N$ is the leftmost place such that $u_N \neq v_N$. It is immediate that $u \not < u$, and one of $u < v$, $u = v$, $u > v$ always holds. Transitivity can be checked as well.

The classes $[u]_{\infty}$ are the largest subsets of $A^{\omega^*}$ that can possibly be ordered lexicographically, in the sense that if $u \not \sim_{\infty} v$, then $u$ and $v$ have no leftmost place of difference. In what follows, whenever we refer to an order on $[u]_{\infty}$ we mean the lexicographical order. If we write $u < v$ it is assumed that $u \sim_{\infty} v$.

Here are our lemmas. 

\theoremstyle{definition}
\newtheorem{uinftypes}[theorem]{Lemma}
\begin{uinftypes}\label{uinftypes}
Fix $u \in A^{\omega^*}$. 
\begin{enumerate}
\item[1.] If $A$ has no endpoints, the class $[u]_{\infty}$ is a $\mathbb{Z}$-order.
\item[2.] If $A$ has a left endpoint but no right endpoint, the class $[u]_{\infty}$ is either an $\omega$-order or $\mathbb{Z}$-order. 
\item[3.] If $A$ has a right endpoint but no left endpoint, the class $[u]_{\infty}$ is either an $\omega^*$-order or $\mathbb{Z}$-order. 
\end{enumerate} 
\end{uinftypes}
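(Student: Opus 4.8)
The plan is to compute the coinitiality and cofinality of $[u]_{\infty}$ directly, by analyzing how $[u]_{\infty}$ is assembled from finite-dimensional pieces. For each $n \in \omega$, let $[u]_{\geq n}$ be the set of $v \in [u]_{\infty}$ with $v_m = u_m$ for all $m \geq n$. Then $[u]_{\geq n}$ is an interval of $[u]_{\infty}$ (if $v < z < w$ with $v, w \in [u]_{\geq n}$ and $z \in [u]_{\infty}$, then letting $M$ be the leftmost position $\geq n$ at which $z$ disagrees with $u$ would force $z$ to lie first-differ from both $v$ and $w$ at $M$, hence to be below both or above both — a contradiction; so $z$ agrees with $u$ on all positions $\geq n$). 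Moreover $[u]_{\geq n}$ is order-isomorphic to $A^n$ with the coordinate at position $n-1$ most significant, and $[u]_{\infty} = \bigcup_n [u]_{\geq n}$ is an increasing union. Viewing $[u]_{\geq n+1} \cong A \times A^n$ with the first factor the coordinate at position $n$, the subinterval $[u]_{\geq n}$ is the slice $\{u_n\} \times A^n$; hence $[u]_{\geq n+1}$ has points below $[u]_{\geq n}$ exactly when $u_n$ is not the least element of $A$, and points above exactly when $u_n$ is not the greatest. This reduces the lemma to computing the coinitiality and cofinality of $[u]_{\infty}$ in each case.

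The crux, which handles all three cases uniformly, is a diagonalization across coordinates. Suppose first that $A$ has no least element. For each $n$ choose $a_n < u_n$ and let $y_n$ be the sequence agreeing with $u$ everywhere except at position $n$, where it takes the value $a_n$. Each $y_n$ lies in $[u]_{\infty}$ (it agrees with $u$ at all positions $> n$); the sequence $(y_n)$ is strictly decreasing, since $y_n$ and $y_{n+1}$ first disagree at position $n+1$, where $y_{n+1}$ is smaller; and $(y_n)$ is coinitial in $[u]_{\infty}$, since any $v \in [u]_{\geq m}$ satisfies $y_m < v$ (they agree at all positions $> m$, while $(y_m)_m = a_m < u_m = v_m$). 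The same computation shows $[u]_{\infty}$ has no least element, so its coinitiality is $\omega$. If instead $A$ has a least element $0$, then either $u \sim_{\infty} \overline{0} = (\ldots,0,0,0)$, in which case $\overline{0}$ is the minimum of $[u]_{\infty}$ and the coinitiality is $1$, or there are infinitely many positions $n$ with $u_n \neq 0$; enumerating these as $n_0 < n_1 < \ldots$ and letting $y_k$ agree with $u$ except for the value $0$ at position $n_k$ gives, by the same argument, a strictly decreasing coinitial sequence with no minimum, so the coinitiality is again $\omega$. The cofinality is computed identically, interchanging the roles of $<$ and $>$ and of least and greatest.

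The three cases now follow. In case (1), $A$ has neither a least nor a greatest element, so $[u]_{\infty}$ has coinitiality and cofinality both $\omega$, i.e. it is a $(\omega,\omega)$-interval, a $\mathbb{Z}$-order. In case (2), $A$ has a least element but no greatest, so the cofinality of $[u]_{\infty}$ is $\omega$ while its coinitiality is $1$ when $u \sim_{\infty} \overline{0}$ and $\omega$ otherwise; thus $[u]_{\infty}$ is a $(1,\omega)$-interval (an $\omega$-order) in the first subcase and a $\mathbb{Z}$-order in the second. Case (3) follows from case (2) by passing to the reverse order: replacing $A$ by $A^*$ reverses the lexicographic order on $[u]_{\infty}$ while leaving the relation $\sim_{\infty}$ unchanged, and $A^*$ has a least element but no greatest, so $[u]_{\infty}$ is the reverse of an $\omega$-order or a $\mathbb{Z}$-order, i.e. an $\omega^*$-order or a $\mathbb{Z}$-order. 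No step presents a genuine obstacle; the only thing requiring care is the bookkeeping in the diagonalization — in particular verifying that the $y_n$ really land in $[u]_{\infty}$ and are coinitial (resp. cofinal) in all of $[u]_{\infty}$, not merely within some $[u]_{\geq n}$.
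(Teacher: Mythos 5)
Your proposal is correct and follows essentially the same route as the paper: in each case you exhibit an explicit monotone coinitial (resp.\ cofinal) sequence in $[u]_{\infty}$ by perturbing $u$ at a single coordinate position $n$ and letting $n$ grow, with the special subcase $u \sim_{\infty} \overline{0}$ handled separately, and case (3) obtained by order reversal. The additional scaffolding about the intervals $[u]_{\geq n} \cong A^n$ is sound but not needed for the paper's (identical) diagonalization.
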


\begin{proof}
(1.) Assume first $A$ has no endpoints. We define a sequence $\ldots < v^{-1} < v^0 < v^1 < \ldots$ spanning $[u]_{\infty}$. Let $v^0 = u$. For $n$ a fixed positive integer, define $v^n = (\ldots, v^n_1, v^n_0)$ to be any sequence such that $v^n_m = u_m$ for all $m > n$ but $v^n_n > u_n$. It is always possible to find such a $v^n_n$, since $A$ does not have a top point. On the other side, again for $n$ a fixed positive integer, let $v^{-n}$ be a sequence such that $v^{-n}_m=u_m$ for all $m > n$, but now $v^{-n}_n < u_n$. This is possible since $A$ does not have a bottom point. 

It is clear that $\ldots < v^{-1} < v^0 < v^1 < \ldots$ since the ordering is lexicographical. Further if $v \in [u]_{\infty}$, then there is some $N$ such that for all $m \geq N$ we have $v_m = u_m$. Thus $v^{-N} < v < v^N$, and so the sequence $v^n$ spans $[u]_{\infty}$, as desired. \,\ \\

(2.) Now assume $A$ has a left endpoint $0$ but no right endpoint. The class $[\overline{0}]_{\infty}$ has a left endpoint, namely $\overline{0}$ itself. Let $v^0 = \overline{0}$ and $v^n$ be any sequence that is 0 beyond the $n$th place, but is greater than $0$ in the $n$th place. Then we have $v^0 < v^1 < \ldots$ as before, and in fact this sequence spans $[\overline{0}]_{\infty}$. Thus $[\overline{0}]_{\infty}$ is an $\omega$-order. 

So assume $u \not \sim_{\infty} \overline{0}$. We show $[u]_{\infty}$ is a $\mathbb{Z}$-order. As before, since $A$ has no top point, we may find $v^1 < v^2 < \ldots$ cofinal in $[u]_{\infty}$. 

On the other side, observe that since $u \not\sim_{\infty} \overline{0}$, there are infinitely many places $n$ such that $u_n > 0$. Enumerate these places as $n_k$, $k \in \omega$. Let $v^{n_k}$ be any sequence that agrees with $u$ beyond the $n_k$th place, but such that $v^{n_k}_{n_k} < u_{n_k}$. This is possible since $u_{n_k} > 0$. Then $\ldots < v^{n_1} < v^{n_0}$, and further this sequence is coinitial in $[u]_{\infty}$. Hence $[u]_{\infty}$ is a $\mathbb{Z}$-order, as claimed. \,\ \\

The case (3.) is symmetric to (2.). 
\end{proof}

\theoremstyle{definition}
\newtheorem{XAcovers}[theorem]{Lemma}
\begin{XAcovers}\label{XAcovers}
Suppose that $X$ is an order such that $XA \cong X$. 
\begin{enumerate}
\item[1.] If $A$ has neither a left nor right endpoint, then $X$ admits a $\mathbb{Z}$-cover.
\item[2.] If $A$ has a left endpoint but no right endpoint, then $X$ admits a $\{\mathbb{Z}, \omega\}$-cover. 
\item[3.] If $A$ has a right endpoint but no left endpoint, then $X$ admits a $\{\mathbb{Z}, \omega^*\}$-cover. 
\end{enumerate}
\end{XAcovers}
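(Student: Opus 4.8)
The plan is to carry out, on the right, the analysis that Theorem~\ref{thmax} carries out on the left, and then feed the result to Lemma~\ref{uinftypes}. Given an isomorphism $g\colon XA\to X$, I would first define $\pi\colon X\to X$ by letting $\pi(q)$ be the first coordinate of $g^{-1}(q)\in XA$. Then $\pi$ is a surjection, monotone in the weak sense ($q\le q'\Rightarrow\pi q\le\pi q'$), whose fibers $\pi^{-1}(x)=g[\{x\}\times A]$ are convex and each isomorphic to $A$, with the fibers ordered among themselves as $X$. By induction the fibers of $\pi^{n}$ are convex, each isomorphic to $A^{n}$, and ordered as $X$ (this merely re-derives $X\cong XA^{n}$), and $(\pi^{n})^{-1}(y)$ is the subinterval of $(\pi^{n+1})^{-1}(\pi y)$ sitting between the blocks $(\pi^{n})^{-1}(z)$ with $z<y$ and those with $z>y$ inside the fiber $\pi^{-1}(\pi y)\cong A$. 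Case~3 I would reduce to Case~2 by replacing $X,A$ with $X^{*},A^{*}$: one has $X^{*}A^{*}\cong X^{*}$, and a $\{\mathbb{Z},\omega\}$-cover of $X^{*}$ is a $\{\mathbb{Z},\omega^{*}\}$-cover of $X$, just as in Theorem~\ref{Aonenonepra}(3).

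The cover is then produced as follows. For $p\in X$ put $B_{n}(p)=(\pi^{n})^{-1}(\pi^{n}p)$, so $\{p\}=B_{0}(p)\subseteq B_{1}(p)\subseteq\cdots$ is an increasing chain of intervals with $B_{n}(p)\cong A^{n}$, and set $B_{\infty}(p)=\bigcup_{n}B_{n}(p)$. Since ``$q\in B_{\infty}(p)$'' holds iff $\pi^{n}q=\pi^{n}p$ for some $n$, this is an equivalence relation, so the sets $B_{\infty}(p)$ are convex and partition $X$; declaring $C_{x}=B_{\infty}(x)$ gives a cover, and it remains only to identify the order type of a single $B_{\infty}(p)$. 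For this I would record, for each $n$, the position $c_{n}\in A$ of $\pi^{n}(p)$ inside its fiber $\pi^{-1}(\pi^{n+1}p)\cong A$; chasing coordinates through the inclusion $B_{n}(p)\hookrightarrow B_{n+1}(p)$---which is precisely the ``prepend $c_{n}$'' embedding $A^{n}\cong\{c_{n}\}\times A^{n}\subseteq A^{n+1}$---shows that $B_{\infty}(p)$ is order-isomorphic to the lexicographically ordered class $[u_{p}]_{\infty}\subseteq A^{\omega^{*}}$, where $u_{p}=(\ldots,c_{2},c_{1},c_{0})$. Now Lemma~\ref{uinftypes} closes each case: if $A$ has no endpoints, every $[u_{p}]_{\infty}$, hence every $C_{x}$, is a $\mathbb{Z}$-interval; if $A$ has a left endpoint but no right endpoint, every $C_{x}$ is an $\omega$-interval or a $\mathbb{Z}$-interval.

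If one prefers to avoid the identification with $[u_{p}]_{\infty}$, the order type of $B_{\infty}(p)$ can be read directly off the chain: because $A$ has no right endpoint, $\pi^{n}(p)$ is never the right endpoint of its fiber, so every $B_{n+1}(p)$ extends $B_{n}(p)$ strictly to the right and $B_{\infty}(p)$ has cofinality $\omega$; on the left, $B_{n+1}(p)$ extends $B_{n}(p)$ exactly when $\pi^{n}(p)$ is not the left endpoint of its fiber, so either this occurs infinitely often, giving coinitiality $\omega$, or $\pi^{n}(p)$ is eventually always the left endpoint, in which case the left end of $B_{\infty}(p)$ is the (existing) left endpoint of some $B_{N}(p)\cong A^{N}$ and the coinitiality is $1$. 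I expect this order-type bookkeeping to be the main obstacle---verifying that the $B_{n}(p)$ are convex with the claimed endpoint behaviour, that the nested union neither stabilizes prematurely nor acquires uncountable cofinality or coinitiality, and that the ``prepend $c_{n}$'' picture genuinely corresponds to the $\sim_{\infty}$ relation---whereas setting up $\pi$ is routine in light of the techniques of Theorem~\ref{thmax}, and the fact that the $B_{\infty}(p)$ partition $X$ is immediate from the equivalence-relation description.
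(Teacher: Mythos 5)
Your proposal is correct and is essentially the paper's own proof: your $\pi$ is the left component $f_l$ of the isomorphism $X\cong XA$, your $B_n(p)$ and $B_\infty(p)$ are the paper's intervals $I_n^x$ and $I_\infty^x$, and the identification of $B_\infty(p)$ with the lexicographically ordered class $[u_p]_\infty\subseteq A^{\omega^*}$ via the ``prepend $c_n$'' embeddings is exactly the paper's isomorphism $y\mapsto u_y$, after which Lemma \ref{uinftypes} finishes each case. Your optional direct bookkeeping of the cofinality and coinitiality of $B_\infty(p)$ is also sound and merely re-proves the relevant cases of Lemma \ref{uinftypes} in situ.
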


\begin{proof}
The intuition of the proof is simple. If $XA \cong X$, then $X$ can be organized into $X$-many intervals of type $A$. Since there are $X$-many of them, these intervals in turn may be organized into $XA$-many copies of $A$, that is, $X$-many copies of $A^2$. And so on. Now consider the situation from the point of view of some fixed $x \in X$. At the first stage $x$ is included in a copy of $A$, and in the second, in some copy of $A^2$ containing the initial copy of $A$, etc. These larger and larger intervals surrounding $x$, consecutively isomorphic to $A$, $A^2$, $A^3$, $\ldots$, have a limit, which is isomorphic to $[u]_{\infty}$ for some $u \in A^{\omega^*}$. The conclusion follows. 

To make this explicit, let $f: X \rightarrow XA$ be an isomorphism. Let $f_l$ and $f_r$ denote the left and right components of $f$, that is, the unique functions such that $f(x) = (f_l(x), f_r(x))$ for all $x \in X$. For every $x \in X$, we define a sequence of points $x_0, x_1, \ldots$ in $X$ and a sequence $a_0^x, a_1^x, \ldots$ in $A$. Let $x_0 = x$ and recursively define $x_{n+1} = f_l(x_n)$. Let $a_n^x = f_r(x_n)$. 

With this notation, we have $f(x)=(x_1, a_0^x)$. By repeatedly factoring the $x_i$, we get an isomorphism from $X$ onto $XA^n$ defined by $x \mapsto (x_n, a_{n-1}^x, \ldots a_1^x, a_0^x)$. Although it is not literally an $n$-fold composition of $f$, we denote this isomorphism by $f^n$. 

For $n \in \omega$, define $I_n^x$ to be the set of $y \in X$ such that $y_n = x_n$. Then the image of $I_n^x$ under $f^n$ is the set of points in $XA^n$ of the form $(x_n, b_{n-1}, \ldots, b_0)$, $b_i \in A$. Hence $I_n^x$ is an interval in $X$, and it is isomorphic to $A^n$. Furthermore, we have the containments $\{x\} = I_0^x \subseteq I_1^x \subseteq I_2^x \subseteq \ldots$ since if $y_N = x_N$ then for every $n \geq N$ we have $y_n = x_n$ as well.

Let $I_{\infty}^x = \bigcup_n I_n^x$. Then since the $I_n^x$ form a chain of intervals in $X$, $I_{\infty}^x$ is also an interval in $X$. Notice that by definition, for every $N\in \omega$ and $x, y \in X$, either $I_N^x \cap I_N^y = \emptyset$ or $I_N^x = I_N^y$, and in this latter case $I_n^x = I_n^y$ for all $n \geq N$. Hence $I_{\infty}^x$ and $I_{\infty}^y$ are either equal or disjoint as well. 

For $x \in X$, define $u_x$ to be the sequence $(\ldots, a_1^x, a_0^x)$. Then if $y \in I_{\infty}^x$, it must be that for all sufficiently large $n$ we have $y_n = x_n$. Thus for all sufficiently large $n$ we have $a_n^x=a_n^y$, which gives $u_x \sim_{\infty} u_y$. On the other hand, suppose $v \in [u_x]_{\infty}$, say $v = (\ldots, a_{n+1}^x, a_n^x, b_{n-1}, \ldots, b_1, b_0)$ for some $b_i \in A$, $i <n$. There is a unique $y \in X$ such that $f^n(y) = (y_n, a^y_{n-1}, \ldots, a^y_0) = (x_n, b_{n-1}, \ldots, b_0)$. But then $u_y = v$. 

This shows that the map $F: I_{\infty}^x \rightarrow [u_x]_{\infty}$ defined by $F(y) = u_y$ is a bijection. This map is clearly order-preserving as well, and hence an isomorphism of $I_{\infty}^x$ with $[u_x]_{\infty}$. Thus $\{I_{\infty}^x: x \in X \}$ is a cover of $X$ by intervals of the form $[u]_{\infty}$. The conclusion of the lemma now follows from Lemma \ref{uinftypes}. 
\end{proof}

\begin{proof}[Proof of Theorem \ref{Xcubedcovers}]
Suppose $X$ is an order such that $X^3 \cong X$. Then $XA \cong X$, where $A = X^2$. If $X$ has no endpoints, then $A$ has no endpoints. By Lemma \ref{XAcovers}, $X$ admits a $\mathbb{Z}$-cover, and therefore, by Lemma \ref{uinftypes}, $X^{\omega}$ admits a parity-reversing automorphism. If $X$ has a left endpoint, but not a right one, then similarly $A$ has a left endpoint, but no right one. Hence $X$ admits a $\{\mathbb{Z}, \omega\}$-cover, and again $X^{\omega}$ admits a parity-reversing automorphism. The right endpoint case is symmetric. 
\end{proof}

We conclude this section with a sketch of the proof that $X^n \cong X$ implies $X^2 \cong X$ for all orders $X$ and all $n \geq 2$. When $n=2$ the statement is trivial, and we have just finished the proof for $n=3$. The argument for larger $n$ is a straightforward adaptation of the case when $n=3$.

For convenience, we consider orders satisfying the (only notationally distinct) isomorphism $X^{n+1} \cong X$, and we assume $n>2$. This isomorphism can be rewritten as $A^nX \cong X$, where $A=X$. We know by the results at the end of Section 3 that if $A^{\omega}$ has an $n$-revolving automorphism ($n$-r.a.), then $AX \cong X$. So we turn to the question of when $A^{\omega}$ admits such an automorphism. 

We build $n$-r.a.'s as we built p.r.a.'s, that is, as unions of standard maps. In our previous argument, to ensure that a map $f$ was parity-reversing, it was enough to have that for every $u$ either $f(u) = \sigma^n u$ or $f(u) = ru$, with $n$ and $|r|$ both odd. This is because deleting or adding an initial sequence of odd length always sends $u$ into $[au]_2$. For $n > 2$, the situation is not symmetric. 

\theoremstyle{definition}
\newtheorem{nra}[theorem]{Lemma}
\begin{nra}\label{nra}
Suppose $f: A^{\omega} \rightarrow A^{\omega}$ is an order automorphism, and for every $u \in A^{\omega}$, either $f(u) = \sigma^k u$ for some $k \equiv n-1 \pmod n$ or there exists a finite sequence $r$ with $|r| \equiv 1 \pmod n$ and $f(u) = ru$. Then $f$ is an $n$-r.a.
\end{nra}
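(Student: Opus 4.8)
The plan is to verify directly the defining property of an $n$-revolving automorphism from Definition \ref{ra}: that for every $u \in A^{\omega}$ and every $a \in A$, the image $f[[u]_n]$ equals $[au]_n$. (This is a consistent demand because $[au]_n$ does not depend on $a$: any two sequences $au$ and $a'u$ share the tail $u$ with single-letter initial pieces, so $au \sim_n a'u$.) First I would establish the pointwise fact that $f(u) \in [au]_n$ for every $u$ and $a$, which is immediate from the hypothesis in two cases. If $f(u) = \sigma^k u$ with $k \equiv n-1 \pmod n$, set $u' = \sigma^k u$ and $r = u \upharpoonright k$, so that $u = ru'$, $f(u) = u' = \emptyset u'$, and $au = (ar)u'$; since $|ar| = k+1 \equiv 0 \equiv |\emptyset| \pmod n$, this meeting representation witnesses $f(u) \sim_n au$. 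If instead $f(u) = ru$ with $|r| \equiv 1 \pmod n$, then $f(u) = ru$ and $au = (a)u$ share the tail $u$ with $|r| \equiv 1 = |a| \pmod n$, so again $f(u) \sim_n au$.

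The second ingredient is a cancellation lemma: for $a \in A$ and $u, v \in A^{\omega}$ we have $au \sim_n av$ if and only if $u \sim_n v$. The backward implication is trivial, since prepending $a$ to both finite sides of a meeting representation of $u$ and $v$ preserves the congruence of their lengths mod $n$. For the forward implication, take a meeting representation $au = pw'$, $av = qw'$ with $|p| \equiv |q| \pmod n$, and do a short case analysis. If both $p$ and $q$ are nonempty, their first letters are both $a$, so deleting it yields $u = p'w'$, $v = q'w'$ with $|p'| \equiv |q'| \pmod n$. If $p = \emptyset$ and $q \neq \emptyset$, then $w' = au$, so $av = q(au)$; the first letter of $q$ is $a$, say $q = aq'$, giving $v = (q'a)u$ and $u = \emptyset u$ --- and here the hypothesis $|p| \equiv |q|$ is used: since $|p| = 0$ we get $|q'a| = |q| \equiv 0 = |\emptyset| \pmod n$, so $u \sim_n v$. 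The case $q = \emptyset$ is symmetric, and $p = q = \emptyset$ forces $u = v$.

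Granting these two facts, the conclusion is short. For $f[[u]_n] \subseteq [au]_n$: if $v \sim_n u$ then $f(v) \in [av]_n$ by the pointwise fact, while $av \sim_n au$ by the easy half of the cancellation lemma, so $f(v) \in [av]_n = [au]_n$. For $[au]_n \subseteq f[[u]_n]$: since $f$ is surjective, a given $w \in [au]_n$ can be written $w = f(z)$; the pointwise fact gives $w = f(z) \in [az]_n$, so $w$ lies in both $[au]_n$ and $[az]_n$, forcing $az \sim_n au$; the cancellation lemma then yields $z \sim_n u$, i.e. $z \in [u]_n$, whence $w \in f[[u]_n]$. Thus $f[[u]_n] = [au]_n$ for all $u$ and $a$, which is precisely the statement that $f$ is an $n$-r.a.

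The work here is bookkeeping rather than conceptual; the one genuinely delicate point is the degenerate case of the forward direction of the cancellation lemma, where one connecting sequence is empty and the congruence $|p| \equiv |q| \pmod n$ must be invoked. It is also worth explaining in the write-up why the two congruences in the hypothesis --- namely, deleting an initial segment whose length is $\equiv n-1$, versus prepending one whose length is $\equiv 1$, both modulo $n$ --- are the mutually complementary ones that make the pointwise step $f(u) \in [au]_n$ go through; this is exactly the asymmetric phenomenon, absent when $n = 2$, remarked on just before the lemma. Finally, note that only the surjectivity of $f$, not its order-preservation, actually enters this argument.
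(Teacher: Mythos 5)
Your proof is correct, and its core step is exactly the paper's: the paper's entire proof of this lemma is the single sentence that ``in either case, the obvious meeting representation witnesses $f(u) \sim_n au$,'' which is your pointwise fact with the same two meeting representations. Where you genuinely go beyond the paper is in upgrading the pointwise statement $f(u) \in [au]_n$ to the definition of an $n$-r.a.\ as stated in Definition \ref{ra}, namely that $f$ maps the class $[u]_n$ \emph{onto} $[au]_n$. The paper treats this equivalence as obvious (it asserts it without proof in the ``Equivalently'' clause of Definition \ref{pra} for the case $n=2$, and silently relies on it here), whereas you supply the missing ingredient: the cancellation lemma that $au \sim_n av$ iff $u \sim_n v$, with the correct attention to the degenerate case where one side of the meeting representation is empty and the congruence $|p| \equiv |q| \pmod n$ must actually be invoked. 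Your case analysis there is right, and the deduction of surjectivity of $f$ restricted to classes from surjectivity of $f$ on $A^\omega$ plus cancellation is clean. So nothing is wrong and nothing is missing; you have simply written out a step the paper elides, which is worth keeping since the class-image form of the definition is what is actually used in Proposition \ref{propra}.
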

\begin{proof}
In either case, the obvious meeting representation witnesses $f(u) \sim_n au$. 
\end{proof}

Thus if we have two sequences $r, s \in A^{<\omega}$ such that $|r| \equiv n-1 \pmod n$ and $|s| \equiv 1 \pmod n$, the standard map on $[\overline{r}, \overline{s}]$ is $n$-revolving. As before, when $A$ has both a left and right endpoint, a single standard map serves to get an $n$-r.a. of $A^{\omega}$. 

\theoremstyle{definition}
\newtheorem{Abothnra}[theorem]{Theorem}
\begin{Abothnra}\label{Abothnra}
Suppose $A$ has both a left and right endpoint. Then $A^{\omega}$ has an $n$-revolving automorphism. 
\end{Abothnra}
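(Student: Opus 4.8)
The plan is to imitate the proof of Theorem \ref{Abothpra}, replacing the standard map used there by one whose two pieces shift or prepend a block of the correct length modulo $n$. Let $0$ and $1$ denote the left and right endpoints of $A$, so that $A^{\omega}$ has left endpoint $\overline{0}$ and right endpoint $\overline{1}$, i.e. $A^{\omega} = [\overline{0}, \overline{1}]$. The key observation is that $\overline{0}$ can be written as $\overline{r}$ for $r = 0^{n-1}$ (the constant sequence $0$ of length $n-1$), since $\overline{0^{n-1}} = \overline{0}$, while $\overline{1} = \overline{s}$ for $s = 1$ of length $1$; here $|r| = n-1 \equiv n-1 \pmod{n}$ and $|s| = 1 \equiv 1 \pmod{n}$.

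First I would invoke Lemma \ref{lemstd} with these $r, s$: since $\overline{r} = \overline{0} < \overline{1} = \overline{s}$, the standard map $f$ on $[\overline{r}, \overline{s}] = [\overline{0}, \overline{1}] = A^{\omega}$ is an order automorphism of $A^{\omega}$. (As in the remark following Lemma \ref{lemstd}, there is no ambiguity at the breakpoint $\overline{rs}$, since $\sigma^{|r|}\overline{rs} = s\overline{rs} = \overline{sr}$.) Next I would verify that $f$ satisfies the hypotheses of Lemma \ref{nra}: on $[\overline{r}, \overline{rs}]$ we have $f(u) = \sigma^{|r|}u = \sigma^{n-1}u$ with $n-1 \equiv n-1 \pmod{n}$, and on $[\overline{rs}, \overline{s}]$ we have $f(u) = su$ with $|s| = 1 \equiv 1 \pmod{n}$. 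Hence by Lemma \ref{nra}, $f$ is an $n$-revolving automorphism of $A^{\omega}$, which is exactly the claim.

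There is essentially no obstacle here; the only point requiring care --- and the only thing that differs from the $n = 2$ case --- is the choice of representation of the left endpoint: one must write $\overline{0}$ as $\overline{0^{n-1}}$ rather than as $\overline{0}$ itself, so that the shift length in the lower piece is $\equiv n-1 \pmod{n}$ as Lemma \ref{nra} demands (for $n > 2$ the naive choice of length $1$ would fail, since there $\sigma^1$ corresponds to $k \equiv 1$, not $k \equiv n-1$). With that adjustment the argument is the verbatim analogue of the proof of Theorem \ref{Abothpra}.
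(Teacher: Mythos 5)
Your proposal is correct and is essentially identical to the paper's own proof: the paper likewise writes $A^{\omega} = [\overline{0^{n-1}}, \overline{1}]$ and applies Lemma \ref{nra} to conclude that the standard map on this interval is $n$-revolving. You have simply spelled out the verification of the hypotheses of Lemma \ref{nra} in more detail, including the key point about representing the left endpoint as $\overline{0^{n-1}}$.
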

\begin{proof}
Let $0$ and $1$ denote the left and right endpoints of $A$. Then $A^{\omega} = [\overline{0^{n-1}}, \overline{1}]$. By Lemma \ref{nra}, the standard map on this interval is $n$-revolving. 
\end{proof}

In particular, if $X$ has both endpoints and $X^{n+1} \cong X$, then $X^2 \cong X$. For the cases with one or neither endpoint, we need the generalizations of Theorems \ref{Aonenonepra} and \ref{Xcubedcovers}. 

\theoremstyle{definition}
\newtheorem{Aonenonenra}[theorem]{Theorem}
\begin{Aonenonenra}\label{Aonenonenra}
\,\ \
\begin{enumerate}
\item[1.] If $A$ admits a $\mathbb{Z}$-cover, then $A^{\omega}$ has an $n$-r.a.
\item[2.] If $A$ has a left endpoint and admits a $\{\mathbb{Z}, \omega\}$-cover, then $A^{\omega}$ has an $n$-r.a.
\item[3.] If $A$ has a right endpoint and admits a $\{\mathbb{Z}, \omega^*\}$-cover, then $A^{\omega}$ has an $n$-r.a.
\end{enumerate}
Thus in any of these three cases, if $X$ is an order such that $A^nX \cong X$, then $AX \cong X$ as well. 
\end{Aonenonenra}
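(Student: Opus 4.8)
The plan is to run the proof of Theorem~\ref{Aonenonepra} essentially verbatim, replacing every parity-reversing standard map by an $n$-revolving one. The only genuinely new ingredient I need is that an interval $[\overline{x},\overline{y}]$ with $x<y$ in $A$ carries an $n$-revolving standard map. Since $\overline{x}=\overline{x^{n-1}}$, I would take the standard map on $[\overline{x},\overline{y}]$ defined with respect to the sequences $r=x^{n-1}$ (the letter $x$ repeated $n-1$ times) and $s=y$: then $|r|\equiv n-1\pmod n$ and $|s|\equiv 1\pmod n$, so by Lemma~\ref{nra} this standard map is $n$-revolving. As in the parity-reversing case, such a standard map fixes both endpoints $\overline{x}$ and $\overline{y}$, so these pieces can be stitched together along shared endpoints just as before.

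For part~(1.), $A$ admits a $\mathbb{Z}$-cover $\mathcal{C}$ and hence has no endpoints. Fixing $\mathbb{Z}$-spanning sequences $\ldots<x^C_{-1}<x^C_0<x^C_1<\ldots$ for each $C\in\mathcal{C}$ gives, exactly as in the proof of Theorem~\ref{Aonenonepra}(1.), a covering of $A^{\omega}$---disjoint up to shared endpoints---by the intervals $[\overline{x^C_k},\overline{x^C_{k+1}}]$. I would put on each of these the $n$-revolving standard map defined with respect to $r=(x^C_k)^{n-1}$, $s=x^C_{k+1}$, and take the union; since consecutive pieces agree at their single common endpoint, the union is an automorphism of $A^{\omega}$, and it is an $n$-r.a.\ because each piece is. For part~(2.), $A$ has a left endpoint $0$, a $\{\mathbb{Z},\omega\}$-cover, and no right endpoint; here the covering of $A^{\omega}$ is literally the one in the proof of Theorem~\ref{Aonenonepra}(2.): the intervals $[\overline{x^C_k},\overline{x^C_{k+1}}]$ together with the bridge intervals $[x\overline{0},\overline{x}]$, one for each left endpoint $x=x^C_0$ of an $\omega$-interval $C$. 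On the first family I would use the $n$-revolving standard maps as above. For a nondegenerate bridge (so $x>0$), the shift $g\colon u\mapsto\sigma u$ is an isomorphism of $[x\overline{0},\overline{x}]$ onto $[\overline{0},\overline{x}]=[\overline{0^{n-1}},\overline{x}]$; I would let $h$ be the $n$-revolving standard map on the latter with respect to $r=0^{n-1}$, $s=x$, and set $f_x=g^{-1}\circ h\circ g$ (with $f_x=\mathrm{id}$ on the degenerate bridge $\{\overline{0}\}$, which is legitimate since $\overline{0}\in[a\overline{0}]_n$). The union of all these maps is an automorphism of $A^{\omega}$.

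The one step that requires real verification---and the one I expect to be the main obstacle---is that the conjugate $f_x=g^{-1}\circ h\circ g$ is again $n$-revolving, the worry being that conjugating by a \emph{single} shift changes lengths by $1$, which is not $0\pmod n$ when $n>2$. This is a short bookkeeping argument with tail-equivalence. Writing $u=x\cdot\sigma u$ (every $u$ in the bridge has $u_0=x$), one gets $f_x(u)=x\cdot h(\sigma u)$. Since $h$ is $n$-revolving, $h(\sigma u)\in[b\,\sigma u]_n$ for every $b\in A$; prepending the fixed letter $x$ respects $\sim_n$, so $x\cdot h(\sigma u)\in[xb\,\sigma u]_n$; and $xb\,\sigma u$ and $ax\,\sigma u=au$ share the tail $\sigma u$ with length-$2$ prefixes, so $[xb\,\sigma u]_n=[au]_n$. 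Hence $f_x(u)\in[au]_n$, i.e.\ $f_x$ is $n$-revolving. The content is that the single letter deleted by the shift $g$, together with the two letters prepended---one by $g^{-1}$ and one by left multiplication by $A$---balance $\pmod n$, which is exactly why a single shift, rather than one of length $\equiv 1\pmod n$, is the right conjugating map.

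Finally, part~(3.) follows by symmetry: if $A$ has a right endpoint and a $\{\mathbb{Z},\omega^*\}$-cover, then $A^*$ has a left endpoint and a $\{\mathbb{Z},\omega\}$-cover, so by part~(2.) there is an $n$-r.a.\ on $(A^*)^{\omega}\cong(A^{\omega})^*$; since the $n$-revolving condition $f[[u]_n]=[au]_n$ depends only on the underlying set and on $\sim_n$---not on the order---the same map is an $n$-r.a.\ on $A^{\omega}$. In each of the three cases, feeding the resulting $n$-revolving automorphism of $A^{\omega}$ into Proposition~\ref{propra} yields that $A^nX\cong X$ implies $AX\cong X$, which is the last assertion of the theorem.
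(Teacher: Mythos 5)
Your proof is correct and follows essentially the same route as the paper's (which is only sketched there): cover $A^{\omega}$, disjointly up to endpoints, by intervals $[\overline{x},\overline{y}]$ rewritten as $[\overline{x^{n-1}},\overline{y}]$ so that Lemma \ref{nra} makes their standard maps $n$-revolving, stitch these along shared fixed endpoints, and handle the bridge intervals $[x\overline{0},\overline{x}]$ by conjugating with the shift. Your explicit verification that $f_x=g^{-1}\circ h\circ g$ remains $n$-revolving---the computation $f_x(u)=x\cdot h(\sigma u)\in[xb\,\sigma u]_n=[au]_n$, where the one deleted letter and the two prepended letters balance against the single letter prepended to $u$---is precisely the detail the paper's sketch glosses over with ``intervals of the second type shift onto intervals of the first type,'' and your bookkeeping there is correct.
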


\theoremstyle{definition}
\newtheorem{Xnplusonecovers}[theorem]{Theorem}
\begin{Xnplusonecovers}\label{Xnplusonecovers}
Suppose $X$ is an order such that $X^{n+1} \cong X$. 
\begin{enumerate}
\item[1.] If $X$ has neither a left nor right endpoint, then $X$ admits a $\mathbb{Z}$-cover.
\item[2.] If $X$ has a left endpoint but no right endpoint, then $X$ admits a $\{\mathbb{Z}, \omega\}$-cover. 
\item[3.] If $X$ has a right endpoint but no left endpoint, then $X$ admits a $\{\mathbb{Z}, \omega^*\}$-cover. 
\end{enumerate}
\end{Xnplusonecovers}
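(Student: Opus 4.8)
The plan is to imitate, essentially verbatim, the proof of Theorem \ref{Xcubedcovers}, which is precisely the case $n=2$ of the present statement. First I would rewrite the hypothesis $X^{n+1} \cong X$ in right-sided form as $XA \cong X$, where now $A = X^n$ (in place of $A = X^2$). Everything then reduces to checking that $A = X^n$ has the endpoint configuration demanded by Lemma \ref{XAcovers}. This is immediate from the description of the lexicographic product: assuming $X$ is nonempty, $X^n$ has a left endpoint if and only if $X$ does, namely the tuple $(0, 0, \ldots, 0)$, and likewise on the right; and if $X$ has no right endpoint, then given any tuple in $X^n$ we may strictly increase its last coordinate, so $X^n$ has no right endpoint either, and symmetrically on the left. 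The degenerate possibilities $\abs{X} \le 1$ are dealt with separately and trivially: if $X^{n+1} \cong X$ then $X$ finite forces $\abs{X} \in \{0,1\}$, the empty order carries the (vacuous) empty cover, and a singleton has both endpoints, so none of the three cases applies.

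With this in place the three cases are mechanical. In case (1), $X$ has no endpoints, so $A = X^n$ has neither a left nor a right endpoint, and Lemma \ref{XAcovers}(1) gives that $X$ admits a $\mathbb{Z}$-cover. In case (2), $X$ has a left endpoint but no right one, hence so does $A = X^n$, and Lemma \ref{XAcovers}(2) yields a $\{\mathbb{Z}, \omega\}$-cover. Case (3) is symmetric, using Lemma \ref{XAcovers}(3).

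Since there is no new combinatorial content here beyond bookkeeping, I do not expect a genuine obstacle: the only points requiring (mild) care are the verification that passing from $X$ to its finite power $X^n$ preserves the relevant one-sided-endpoint hypotheses, and the separation of the finite cases. Once Theorem \ref{Xnplusonecovers} is established, combining it with Theorem \ref{Aonenonenra} produces an $n$-revolving automorphism of $X^{\omega}$ whenever $X^{n+1} \cong X$ and $X$ has at most one endpoint; together with Theorem \ref{Abothnra} for the two-endpoint case and Proposition \ref{propra}, this gives $X^2 \cong X$, completing the proof of the main theorem for all $n \geq 2$.
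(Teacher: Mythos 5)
Your proposal is correct and is essentially the paper's own argument: the paper derives Theorem \ref{Xnplusonecovers} immediately from Lemma \ref{XAcovers} by rewriting $X^{n+1} \cong X$ as $XA \cong X$ with $A = X^n$ and observing that $X^n$ inherits the endpoint configuration of $X$, exactly as you do. Your explicit handling of the degenerate cases $\abs{X} \le 1$ is a minor addition of care not spelled out in the paper, but there is no substantive difference.
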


The conjunction of these theorems along with \ref{Abothnra} gives that $X^{n+1} \cong X \implies X^2 \cong X$ for all $X$. Theorem \ref{Xnplusonecovers} follows immediately from Lemmas   \ref{uinftypes} and \ref{XAcovers}. The proof of \ref{Aonenonenra} (1.) is essentially the same as \ref{Aonenonepra} (1.): if $A$ has a $\mathbb{Z}$-cover, then $A^{\omega}$ can be covered (disjointly up to endpoints) by intervals of the form $[\overline{a}, \overline{b}]$ for $a, b \in A$. Writing such intervals as $[\overline{a^{n-1}}, \overline{b}]$, we have that the associated standard maps are $n$-revolving. The union of these maps then gives an $n$-r.a. on $A^{\omega}$. 

The proof of \ref{Aonenonenra} (2.) is also very similar to that of \ref{Aonenonepra} (2.). Given a $\{\mathbb{Z}, \omega\}$-cover of $A$ we obtain a cover of $A^{\omega}$ by intervals of the form $[\overline{a}, \overline{b}]$ and $[x\overline{a}, \overline{b}]$. Intervals of the first type have $n$-revolving standard maps, as above. Intervals of the second type shift onto intervals of the first type, and therefore also have $n$-revolving automorphisms. The union of these maps yields an $n$-r.a. for $A^{\omega}$. 

The proof of (3.) is symmetric. The main theorem follows: 

\theoremstyle{definition}
\newtheorem{mainthm}[theorem]{Theorem}
\begin{mainthm}\label{mainthm}
Suppose $X$ is an order such that $X^n \cong X$ for some $n>1$. Then $X^2 \cong X$. In particular, the cube property holds for $(LO, \times)$. 
\end{mainthm}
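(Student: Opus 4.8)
The plan is to assemble the machinery of Sections 3 and 4; no new argument is needed, only a case split on the endpoint structure of $X$. If $n = 2$ there is nothing to prove, so I would assume $n \geq 3$ and set $k = n - 1 \geq 2$, rewriting the hypothesis $X^n \cong X$ as $X^{k+1} \cong X$, i.e. as $A^k X \cong X$ with $A = X$. In each case the goal is to produce a $k$-revolving automorphism of $A^{\omega} = X^{\omega}$ (equivalently, when $k = 2$, a parity-reversing automorphism), whence $AX \cong X$, that is, $X^2 \cong X$, by Proposition \ref{propra} (by Proposition \ref{proppra} when $k = 2$).

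First I would peel off the case in which $X$ has both a left and a right endpoint: then $A = X$ has both endpoints, so Theorem \ref{Abothnra} supplies a $k$-revolving automorphism of $X^{\omega}$ outright, and Proposition \ref{propra} finishes; alternatively this case is already dispatched by Corollary \ref{endpoints}. Next, suppose $X$ has at most one endpoint, so that $A = X$ likewise has at most one endpoint, on the same side if any. Theorem \ref{Xnplusonecovers}, applied to $X^{k+1} \cong X$, gives that $X$ admits a $\mathbb{Z}$-cover when $X$ has no endpoints, a $\{\mathbb{Z}, \omega\}$-cover when $X$ has a left endpoint but no right one, and a $\{\mathbb{Z}, \omega^*\}$-cover when $X$ has a right endpoint but no left one. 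In each of these three cases the matching clause of Theorem \ref{Aonenonenra} produces a $k$-revolving automorphism of $X^{\omega}$, and Proposition \ref{propra} again yields $X^2 \cong X$. This exhausts the cases, and taking $n = 3$ gives the cube property for $(LO, \times)$.

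The only real point of care is that the case split be complete and correctly routed: the cover theorems \ref{Xcubedcovers} and \ref{Xnplusonecovers} are stated only for orders missing at least one endpoint, so the two-endpoint case must genuinely be separated out and settled by the Schroeder-Bernstein-style argument behind Corollary \ref{endpoints} (equivalently Theorem \ref{Abothnra}). Beyond that bookkeeping there is no obstacle: all the substance has already been done, namely extracting covers of $X$ from the right-sided invariance $XA \cong X$ by passing to the left-infinite sequence space $A^{\omega^*}$ (Lemmas \ref{uinftypes} and \ref{XAcovers}), and building the revolving automorphisms by stitching standard maps together (Theorems \ref{Aonenonenra} and, in the two-endpoint case, \ref{Abothnra}).
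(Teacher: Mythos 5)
Your proposal is correct and follows exactly the paper's own route: separate out the two-endpoint case (Corollary \ref{endpoints} / Theorem \ref{Abothnra}), and in the remaining cases combine Theorem \ref{Xnplusonecovers} with Theorem \ref{Aonenonenra} and Proposition \ref{propra} to get a $k$-revolving automorphism of $X^{\omega}$ and hence $X^2 \cong X$. The case bookkeeping you flag is precisely the only point of care, and you have handled it as the paper does.
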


\section{Constructing orders $X$ such that $X^n \cong X$} 

The purpose of this section is to justify the previous work, in two ways. First, we will show that the cube problem for linear orders, and more generally the problem of showing $X^n \cong X \implies X^2 \cong X$, is not vacuous, in the sense that for every $n$ there exist many orders $X$ such that $X^n \cong X$. We construct such orders below as direct limits, and show in particular that they can be of any infinite cardinality. 

Secondly, we wish the justify the need for the machinery developed in Section 4 for solving the cube problem. When we say ``$X^3 \cong X$," what is meant implicitly is that there exists an isomorphism $f: X^3 \rightarrow X$. All of our analysis of the relation $X^3 \cong X$ has really been with respect to a fixed isomorphism $f$. Associated to such an $f$ is an order of the form $X^{\omega}(I_{[u]_2})$ and an isomorphism $F: X \rightarrow X^{\omega}(I_{[u]_2})$ built using $f$. If we view $f$, via the relabeling $F$, as an isomorphism of $X^2 \times X^{\omega}(I_{[u]_2})$ with $X^{\omega}(I_{[u]_2})$, then $f$ is just the flattening isomorphism $fl_2$ on the first three coordinates. This follows from the proof of \ref{thmax} and the discussion afterwards. Conversely, if we can ever construct an isomorphism $F$ from $X$ onto an order of the form $X^{\omega}(I_{[u]_2})$, we immediately have that $X^3 \cong X$ as witnessed by the flattening isomorphism.

Suppose it were the case that for every $X$ for which there exists an isomorphism $F$ of $X$ onto an order of the form $X^{\omega}(I_{[u]_2})$, we had that $I_{[u]_2} \cong I_{[au]_2}$ for every $a \in X$ and $u \in X^{\omega}$. Let us call such a decomposition \emph{trivial}. Then letting $I_{[u]}$ denote the common order type of $I_{[u]_2}$ and $I_{[au]_2}$, we would have that $X \cong X^{\omega}(I_{[u]})$ and hence $X^2 \cong X$. We would have this isomorphism of $X$ with $X^2$ without any need for a parity-reversing automorphism of $X^{\omega}$, and the work in Section 4 showing that $X^{\omega}$ has such an automorphism would be unnecessary to solve the cube problem. However, we shall show that this is not the case. There are pairs $(X, F)$ where $X$ is a linear order and $F$ is an isomorphism of $X$ onto an order of the form $X^{\omega}(I_{[u]_2})$ such that for many $a$ and $u$ we have $I_{[au]_2} \not\cong I_{[u]_2}$. This follows from Theorem \ref{XsquaredXhard} below. For such an $X$ we have $X^3 \cong X$ naturally, but in order to show $X^2 \cong X$ we need the device of a p.r.a. for $X^{\omega}$. 

Although Theorem \ref{XsquaredXsimple} can be derived from \ref{XsquaredXhard}, it has a proof which is easier to understand and thereby serves as a warmup for the proof of \ref{XsquaredXhard}. There are proofs of these theorems in more algebraic language, but we elect to keep the presentation elementary.

Given orders $X$ and $Y$, an \emph{embedding} of $X$ into $Y$ is an injective order-preserving map $f: X \rightarrow Y$. Given embeddings $f: X_0 \rightarrow Y_0$ and $g: X_1 \rightarrow Y_1$ the map defined by $(x, y) \mapsto (f(x), g(y))$ is an embedding of $X_0 \times X_1$ into $Y_0 \times Y_1$. We denote this map by $(f, g)$. 

Given a sequence of orders $X_0 \subseteq X_1 \subseteq X_2 \subseteq \ldots$ the order $X = \bigcup_{i \in \omega} X_i$ is well-defined. Similarly, given a directed system 
\[
X_0 \xrightarrow{f_0} X_1 \xrightarrow{f_1} \ldots
\]
where each $f_i$ is an embedding of $X_i$ into $X_{i+1}$, we may form the direct limit
\[
X = \varinjlim_{i} X_i.
\]
If the $f_i$ are inclusion maps, the direct limit of the $X_i$ is isomorphic to their union. We will sometimes confuse the direct limit construction with the union construction, and speak of each $X_i$ as a suborder of $X_j$ for $j\geq i$, and as a suborder of $X = \varinjlim X_i$. 

Given two systems
\[
X_0 \xrightarrow{f_0} X_1 \xrightarrow{f_1} \ldots \] \[
Y_0 \xrightarrow{g_0} Y_1 \xrightarrow{g_1} \ldots
\]
we obtain a system
\[
X_0 \times Y_0 \xrightarrow{(f_0, g_0)} X_1 \times Y_1 \xrightarrow{(f_1, g_1)} \ldots
\]
It is a standard fact that if $Z = \varinjlim (X_i \times Y_i)$, then $Z \cong X \times Y$, where $X = \varinjlim X_i$ and $\varinjlim Y_i$. Written shortly, we have
\[
\varinjlim (X_i \times Y_i) \cong (\varinjlim X_i) \times (\varinjlim Y_i).
\] 

Given orders $X$ and $Y$, we say that $X$ \emph{spans} $Y$ if there is an embedding $f$ of $X$ into $Y$ such that for every $y \in Y$, there exist $y_0, y_1$ in the image $f[X]$ such that $y_0 \leq y \leq y_1$. The following theorem says that any order can be expanded to an order of the same cardinality that is isomorphic to its square. 

\theoremstyle{definition}
\newtheorem{XsquaredXsimple}[theorem]{Theorem}
\begin{XsquaredXsimple}\label{XsquaredXsimple}
Let $X_0$ be any order. Then there exists an order $X$ such that
\begin{enumerate}
\item[1.] $X^2 \cong X$, 
\item[2.] $X_0$ spans $X$,
\item[3.] $|X| = |X_0| + \aleph_0$. 
\end{enumerate}
\end{XsquaredXsimple}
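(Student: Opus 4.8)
The plan is to realize $X$ as a direct limit of repeated squarings of $X_0$, glued by diagonal-type embeddings. First I would set $X_1 := X_0^2$ and let $f_0 : X_0 \to X_1$ be the diagonal embedding $x \mapsto (x,x)$. Proceeding recursively, given the embedding $f_i : X_i \to X_{i+1}$ with $X_{i+1} = X_i^2$, I set $X_{i+2} := X_{i+1}^2$ and let $f_{i+1} := (f_i, f_i) : X_i^2 \to X_{i+1}^2$, using the notation $(f,g)$ from the preliminaries; this is again an embedding. Let $X := \varinjlim_i X_i$, with the $X_i$ regarded as suborders of $X$ via the canonical maps.

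To see (1), note that the once-shifted system $X_1 \xrightarrow{f_1} X_2 \xrightarrow{f_2} \cdots$ has $i$-th object $X_i^2$ and $i$-th connecting map $(f_i, f_i)$ --- in other words it is precisely the system obtained by applying the product operation to two copies of the original system $X_0 \xrightarrow{f_0} X_1 \xrightarrow{f_1} \cdots$. By the standard fact $\varinjlim(X_i \times Y_i) \cong (\varinjlim X_i) \times (\varinjlim Y_i)$, the limit of this shifted system is $X \times X = X^2$. On the other hand, deleting the first term of a directed system does not change its limit, so $X^2 \cong \varinjlim_{i \geq 1} X_i = \varinjlim_i X_i = X$.

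For (2), I would first check that for any order $A$ the diagonal map $A \to A^2$, $a \mapsto (a,a)$, is a spanning embedding: given $(a,b) \in A^2$ with $a \leq b$, we have $(a,a) \leq (a,b)$, and either $a$ is the top of $A$, forcing $(a,b) = (a,a)$, or there is $x > a$ with $(x,x) > (a,b)$; the case $a \geq b$ is symmetric. Next, if $h : A \to B$ is a spanning embedding then so is $(h,h) : A^2 \to B^2$, since bracketing each coordinate of a point of $B^2$ between elements of $h[A]$ yields, after reading off the lexicographic order, a bracketing inside the image of $(h,h)$. Hence each $f_i$ is spanning, and since spanning is transitive (one composes bracketings) and survives the passage to the limit (each $X_i$ includes into $X$ by an order embedding), $X_0$ spans $X$. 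For (3), $|X_i| = |X_0|^{2^i}$; if $X_0$ is infinite this equals $|X_0|$ for every $i$, so $|X| = |X_0| = |X_0| + \aleph_0$, and if $X_0$ is finite with at least two elements the $|X_i|$ grow without bound, so $|X| = \aleph_0 = |X_0| + \aleph_0$. (For $|X_0| \leq 1$ the conditions (1)--(3) are incompatible, so the statement is understood to concern $X_0$ with at least two points.)

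The construction is short, so the main points needing care are bookkeeping ones: the connecting maps must be chosen exactly as $f_{i+1} = (f_i, f_i)$, so that the once-shifted system literally coincides with the squared system --- this is what makes the identification $X^2 \cong X$ go through --- and one must verify that both the diagonal embedding and all of its iterates $(f_i, f_i)$ are spanning in every endpoint configuration, since that is precisely the property controlling how $X_0$ sits inside the limit.
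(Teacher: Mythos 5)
Your proposal is correct and follows essentially the same route as the paper: iterate $X_{i+1}=X_i^2$ with diagonal-type connecting maps $f_{i+1}=(f_i,f_i)$, identify the once-shifted system with the squared system to get $X^2\cong X$, and track the diagonal copies of $X_0$ to see that it spans the limit. You supply somewhat more detail than the paper on the spanning and cardinality verifications (the paper treats both as clear), and your remark that the statement must implicitly exclude $|X_0|\leq 1$ is a fair, if minor, observation.
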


\begin{proof}
For every $i \in \omega$, let $X_{i+1} = X_i \times X_i$. Then $X_n = X_0^{2^n}$. Let $f_0: X_0 \rightarrow X_1$ be the embedding defined by $f_0(x) = (x, x)$. For every $i > 0$, let $f_i = (f_{i-1}, f_{i-1})$. Then $f_i$ is an embedding of $X_i$ into $X_{i+1}$. 

Let $X = \varinjlim X_i$. Then we have
\[
\begin{array}{l l l l}
X & = & \varinjlim X_i & (i \geq 0) \\
& \cong & \varinjlim X_i & (i > 0) \\
& = & \varinjlim (X_{i-1} \times X_{i-1}) & (i > 0) \\
& = & \varinjlim (X_i \times X_i) & (i \geq 0) \\
& \cong & \varinjlim(X_i) \times \varinjlim(X_i) & (i \geq 0)\\
& = & X \times X. &
\end{array}
\]
This proves (1.). The cardinality claim (3.) is clear. To verify (2.), let us view the $f_i$ as inclusions, so that each $X_i$ is included in $X_j$ for $j \geq i$, and in $X$. Then $X_0$ is included in $X_1 = X_0 \times X_0$ as the set of points of the form $(a, a)$, and more generally in $X_n = X_0^{2^n}$ as the set of points of the form $(a, a, \ldots, a)$. 

Fix $x \in X$. Then $x \in X_n$ for some $n$. Clearly, for some $a, b$ we have $(a, a, \ldots, a) \leq x \leq (b, b, \ldots, b)$. That is, $x$ lies between two points of $X_0$, provided we view $X_0$ as a subset of $X_n$. Hence we also have that $x$ lies between two points of $X_0$, now viewing $X_0$ as a subset of $X$. 
\end{proof}

Note in particular that $X$ has the same endpoint configuration as $X_0$: if $X_0$ has both endpoints, then so does $X$, and likewise for the other cases. Hence there are orders of any cardinality and any endpoint configuration isomorphic to their squares. 

We can get ``$X^3 \cong X$" instead of ``$X^2 \cong X$" in the conclusion of the theorem by letting $X_{i+1} = X_i^3$ and letting $f_0$ be the embedding $x \mapsto (x, x, x)$. Similarly, we can get orders $X$ such that $X^n \cong X$. 

However, with this particular construction the resulting isomorphisms turn out to be trivial. That is, if one uses the proof of \ref{XsquaredXsimple} to produce an order $X$ isomorphic to $X^3$, and then analyzes the isomorphism $f: X^3 \rightarrow X$ yielded by the proof and the associated decomposition $X^{\omega}(I_{[u]_2})$, one finds that $I_{[u]_2} = 1$ if and only if $u$ is eventually constant. Otherwise $I_{[u]_2} = \emptyset$. Since for eventually constant $u$ we have $[u]_2 = [au]_2 = [u]$, such a decomposition already witnesses $X^2 \cong X$ without the need for a p.r.a. of $X^{\omega}$. 

We shall now show how to directly build an order $X$ along with an isomorphism $F$ of $X$ onto an order of the form $X^{\omega}(I_{[u]})$. More generally we can construct $X$ and $F$ with $F: X \rightarrow X^{\omega}(I_{[u]_n})$. It will follow from the construction that these decompositions can be arranged to be non-trivial.

We have observed that if the embedding from $X_i \times Y_i$ into $X_{i+1} \times Y_{i+1}$ is of the form $(f_i, g_i)$, we have that the limit of the products $X_i \times Y_i$ is isomorphic to the product of the limits. An analogous fact holds for limits of replacements, as well as for limits of infinite products. Let us spell these out. 

Suppose we are given a system 
\[
X_0 \xrightarrow{f_0} X_1 \xrightarrow{f_1} \ldots
\]
Let $X = \varinjlim X_i$. We again view each $X_i$ as included in all subsequent $X_j$ as well as in $X$. Suppose further that for every $x \in X$ we are given a system 
\[
I_{0, x} \xrightarrow{g_{0, x}} I_{1, x} \xrightarrow{g_{1,x}} \ldots
\]
For each $x \in X$, let $I_x = \varinjlim I_{i, x}$.

Naturally we obtain a system
\[
X_0(I_{0, x}) \xrightarrow{F_0} X_1(I_{1, x}) \xrightarrow{F_1} \ldots
\]
The embedding $F_i$ is defined by $F_i(x, y) = (f_i(x), g_{i, x}(y))$, and in the replacement $X_i(I_{i, x})$, it is understood that the index $x$ of each $I_{i, x}$ only ranges over $X_i$. If we let $Y = \varinjlim X_i(I_{i, x})$, then one may verify that $Y \cong X(I_x)$. 

Similarly, if we are given 
\[
X_0 \xrightarrow{f_0} X_1 \xrightarrow{f_1} \ldots
\]
then we obtain a system
\[
X_0^{\omega} \xrightarrow{F_0} X_1^{\omega} \xrightarrow{F_1} \ldots
\]
where the embedding $F_i$ is defined by $F_i((x_0, x_1, \ldots)) = (f_i(x_0), f_i(x_1), \ldots)$. We use the notation $F_i = (f_i, f_i, \ldots)$. Viewing each $X_i$ as included in $X_{i+1}$ by way of $f_i$, we may view $X_{i}^{\omega}$ as included in $X_{i+1}^{\omega}$ by way of $F_i$. Letting $Y = \varinjlim X_i^{\omega}$, observe that it is not the case (naturally, at least) that $Y$ is isomorphic to $X^{\omega}$, where $X = \varinjlim X_i$. Rather, $Y$ is isomorphic to the subset of $X^{\omega}$ consisting of sequences of ``bounded rank," that is, sequences $(x_0, x_1, \ldots)$ such that for some $i$, for all $n$ we have $x_n \in X_i$. If the $F_i$ are true inclusions then $Y = \bigcup_i X_i^{\omega}$. Note that $Y$ is a union of tail-equivalence classes. 

\theoremstyle{definition}
\newtheorem{XsquaredXhard}[theorem]{Theorem}
\begin{XsquaredXhard}\label{XsquaredXhard}
Let $\{L_j: j \in J\}$ by any collection of nonempty, pairwise non-isomorphic orders, indexed by some indexing set $J$. Then there exists an $X$ such that $X \cong X^{\omega}(I_{[u]})$ for some collection of orders $I_{[u]}$ (and hence $X^2 \cong X$), and further such that
\begin{enumerate}
\item[1.] For $u, v \in X^{\omega}$, if $[u] \neq [v]$ then either $I_{[u]} = I_{[v]} = \emptyset$ or $I_{[u]}$ and $I_{[v]}$ are non-isomorphic. 
\item[2.] For every $j \in J$, there exists a unique tail-equivalence class $[u]$ such that $I_{[u]} = L_j$. 
\end{enumerate}
\end{XsquaredXhard}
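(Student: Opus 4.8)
The plan is to construct $X$ directly as a direct limit $X = \varinjlim_{i} X_{i}$, building the isomorphism $X \cong X^{\omega}(I_{[u]})$ into the construction via the two facts established just above: that direct limits commute with replacements, and that $\varinjlim_{i} X_{i}^{\omega}$ is the union of the bounded-rank tail-equivalence classes of $X^{\omega}$. We may assume $J \neq \emptyset$ (otherwise $X = \emptyset$ works, with the conditions vacuous); fix $j^{*} \in J$. First I would take a base order $X_{0}$ with a distinguished point $0$ and with at least $|J|$ tail-equivalence classes in $X_{0}^{\omega}$ — since the constant sequences $\overline{a}$, $a \in X_{0}$, lie in pairwise distinct classes, $|X_{0}| = \max(|J|, 2)$ suffices — fix an injection $j \mapsto C_{j}^{(0)}$ from $J$ into the tail-equivalence classes of $X_{0}^{\omega}$ with $C_{j^{*}}^{(0)} = [\overline{0}]$, and fix a point $* \in L_{j^{*}}$.

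Next I would recursively define $X_{i+1} := X_{i}^{\omega}(M_{i,v})$, where $M_{i,v} := L_{j}$ if $v$ lies in the tail-equivalence class $C_{j}^{(i)}$ of $X_{i}^{\omega}$ (for the necessarily unique such $j$) and $M_{i,v} := \emptyset$ otherwise, together with connecting embeddings $g_{i} : X_{i} \to X_{i+1}$ as follows. The only map chosen by hand is $g_{0}(x) := \big((x,0,0,\dots),\, *\big)$, which lands in the $C_{j^{*}}^{(0)}$-block of $X_{1}$ and is an order-embedding because $(x,0,0,\dots)$ is strictly monotone in $x$. Every later embedding is then forced: on a point $(v,y) \in X_{i}^{\omega}(M_{i,v})$ set $g_{i+1}(v,y) := \big(G_{i}(v),\, y\big)$, where $G_{i} = (g_{i}, g_{i}, \dots) : X_{i}^{\omega} \to X_{i+1}^{\omega}$; here $y \in M_{i,v} = L_{j}$ is read as an element of $M_{i+1, G_{i}(v)} = L_{j}$, and we declare $C_{j}^{(i+1)}$ to be the tail-equivalence class of $X_{i+1}^{\omega}$ containing $G_{i}\big[C_{j}^{(i)}\big]$. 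Two observations make this coherent: (a) since $g_{i}$ is injective, $G_{i}$ sends distinct tail-equivalence classes to distinct ones — if $G_{i}(u) \sim G_{i}(v)$, reading off a common tail and cancelling $g_{i}$ gives $u \sim v$ — so the $C_{j}^{(i)}$ remain pairwise disjoint and $M_{i,\cdot}$ genuinely depends only on the tail-equivalence class of $v$ in $X_{i}^{\omega}$; and (b) each $g_{i+1}$ has the product form $(G_{i}, \mathrm{id})$ required to apply the replacement-limit fact.

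Now set $X := \varinjlim_{i} X_{i}$. Then $X = \varinjlim_{i} X_{i+1} = \varinjlim_{i} X_{i}^{\omega}(M_{i,v})$, and since the connecting maps are of product form, direct limits commute with the replacements and with the $\omega$-powers, giving $X \cong B(M_{u})$, where $B = \varinjlim_{i} X_{i}^{\omega}$ is the union of the bounded-rank tail-equivalence classes of $X^{\omega}$ and $M_{u} = \varinjlim_{i} M_{i, u^{(i)}}$ along the chain of classes tracking $u$. For $u \in B$ first appearing at stage $i_{0}$: if the class of $u^{(i_{0})}$ in $X_{i_{0}}^{\omega}$ is some $C_{j}^{(i_{0})}$, then $M_{i, u^{(i)}} = L_{j}$ with identity maps from then on, so $M_{u} = L_{j}$; otherwise, again by injectivity of $G_{i}$ on classes, $M_{i,u^{(i)}} = \emptyset$ for all $i \geq i_{0}$, so $M_{u} = \emptyset$. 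If $u \sim v$ in $X^{\omega}$ with $u,v \in B$, they lie in the same tail-equivalence class of $X_{i}^{\omega}$ once $i$ is large enough to realize the equivalence, whence $M_{u} = M_{v}$. So defining $I_{[u]} := M_{u}$ when $[u] \subseteq B$ and $I_{[u]} := \emptyset$ otherwise is legitimate, $B(M_{u}) = X^{\omega}(I_{[u]})$, and hence $X \cong X^{\omega}(I_{[u]})$; then $X^{2} \cong X$ follows from Theorem \ref{thmax} with $A = X$.

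Finally I would verify (1.) and (2.) by unwinding $M_{u}$: the set $\{u : M_{u} = L_{j}\}$ equals the single tail-equivalence class $\widehat{C}_{j}$ of $X^{\omega}$ generated by the image of $C_{j}^{(0)}$, and $j \mapsto \widehat{C}_{j}$ is injective by the same cancellation argument. Thus the nonempty $I_{[u]}$ are exactly the $L_{j}$, each occurring on the unique class $\widehat{C}_{j}$; since the $L_{j}$ are pairwise non-isomorphic and $L_{j} \not\cong \emptyset$, both (1.) and (2.) hold. I expect the main obstacle to be the point addressed in the second and third paragraphs: one must let the ``$L_{j}$-regions'' $C_{j}^{(i)}$ propagate as \emph{full} tail-equivalence classes from each stage to the next (a naive choice that keeps only the images $G_{i}\big[C_{j}^{(i)}\big]$ fails to be tail-closed at later stages, so the limit replacement would not respect tail-equivalence), while simultaneously keeping the connecting maps in exactly the product form needed to invoke the direct-limit-commutes-with-replacement machinery.
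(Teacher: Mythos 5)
Your proposal is correct and follows essentially the same route as the paper: iterate $X_{i+1} = X_i^{\omega}(\,\cdot\,)$ with product-form connecting embeddings and pass to the direct limit, using the commutation of limits with replacements and with $\omega$-powers to read off $X \cong X^{\omega}(I_{[u]})$. The only real difference is that you assign $L_j$ to the designated classes and $\emptyset$ to every other class at each stage (embedding via $x \mapsto ((x,0,0,\ldots),*)$), whereas the paper fills every class of every $X_i^{\omega}$ with fresh pairwise non-isomorphic orders; both choices propagate coherently under the $G_i$-cancellation argument you give and yield conclusions (1.) and (2.).
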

\begin{proof}
Let $X_0$ be any order such that the number of tail-equivalence classes in $X_0^{\omega}$ is at least $|J|$. 

Fix an injection $\iota: J \rightarrow \{[u]: u \in X_0^{\omega}\}$. If $\iota(j) = [u]$, define $I_{0, [u]} = L_j$. For those $[u]$ not assigned an index $j \in J$, pick orders $I_{0, [u]}$ so that the final collection $\{I_{0, [u]}: u \in X_0^{\omega}\}$ consists of pairwise non-isomorphic orders. 

Denote the order $X_0^{\omega}(I_{0, [u]})$ by $X_1$. Fix an embedding $f_0: X_0 \rightarrow X_1$. For example, we may define $f_0$ by $f_0(x) = (x, x, x, \ldots, a_x)$, where $a_x$ is any element in $I_{0, [\overline{x}]}$. 

By way of this $f_0$, view $X_0$ as included in $X_1$. Then as in the discussion preceding the theorem, we may view $X_0^{\omega}$ as included in $X_1^{\omega}$ by way of $F_0 = (f_0, f_0, \ldots)$. For each $u \in X_1^{\omega}$, if $u \in X_0^{\omega}$, define $I_{1, [u]} = I_{0, [u]}$. The remaining $u$ are exactly those sequences with infinitely many terms from $X_1 \setminus X_0$. For these $[u]$, iteratively choose orders $I_{1, [u]}$ so that the final collection $\{I_{1, [u]}: u \in X_1^{\omega}\}$ consists of pairwise non-isomorphic orders. 

Let $X_2 = X_1^{\omega}(I_{1, [u]})$. The ``inclusion" $F_0: X_0^{\omega} \rightarrow X_1^{\omega}$ naturally determines an ``inclusion" $f_1: X_1 \rightarrow X_2$. Namely, if $(u, a) \in X_1 = X_0^{\omega}(I_{0, [u]})$, with say $u = (u_0, u_1, \ldots)$, we define $f_1((u, a)) = (F_0(u), a)=(f_0(u_0), f_0(u_1), \ldots, a)$. It is legal to let $f_1$ be the identity on the last coordinate, since the interval $(u, \cdot)$ in $X_1$ is of type $I_{0, [u]}$, and the interval $(F_0(u), \cdot)$ in $X_2$ is of type $I_{1, [u]} = I_{0, [u]}$. 

Now repeat this process. View $X_1$ as included (by way of $f_1$) in $X_2$, and $X_1^{\omega}$ as included in $X_2^{\omega}$. For each $u \in X_2^{\omega}$, if $u \in X_1^{\omega}$, let $I_{2, [u]} = I_{1, [u]}$. For the remaining $u$, fix orders $I_{2, [u]}$ so that the collection $\{I_{2, [u]}: u \in X_2^{\omega}\}$ consists of pairwise non-isomorphic orders. 

Continuing in this way, we get a system
\[
X_0 \xrightarrow{f_0} X_1 \xrightarrow{f_1} X_2 \xrightarrow{f_2} \ldots
\]
Let $X$ be the limit. Then we have
\[
\begin{array}{l l l}
X & = & \varinjlim X_i \\
& \cong & \varinjlim X_{i+1} \\
& = & \varinjlim X_i^{\omega}(I_{i, [u]}) \\
& = & X^{\omega}(I_{[u]})  
\end{array}
\]
where if $u \in X_i^{\omega}$ for some $i$, then $I_{[u]} = I_{i, [u]}$, and if $u$ is in none of the $X_i^{\omega}$ (i.e. if $u$ has terms of unboundedly high rank), then $I_{[u]} = \emptyset$. 

This $X$ satisfies the conclusion of the theorem.
\end{proof}

In particular, any fixed order $L$ can appear as an interval in the $X$ constructed in the proof, by simply including $L$ among the $L_j$. 

By an analogous construction, for any $n\geq 1$ we can get an $X$ such that $X \cong X^{\omega}(I_{[u]_n})$ (and hence $X^{n+1} \cong X$), where the nonempty $I_{[u]_n}$ are pairwise non-isomorphic and fill up the classes of every $X_i^{\omega}$ for some increasing sequence $X_0 \subseteq X_1 \subseteq \ldots$ converging to $X$. Although not all of the $n$-tail-equivalence classes $[u]_n$ are filled in the final decomposition $X^{\omega}(I_{[u]_n})$, those that are filled are filled with pairwise non-isomorphic orders. In particular, there will be many examples of where $I_{[u]_n} \not \cong I_{[au]_n}$, and therefore these decompositions will be non-trivial. By doing a longer induction, it is possible to get $X \cong X^{\omega}(I_{[u]_n})$ where every $I_{[u]_n}$ is nonempty. Of course, it is also possible to arrange during the construction that some (or all) of the $I_{[u]_n}$ are isomorphic.

\section{Constructing $A$ and $X$ such that $X \cong A^2X \not\cong AX$}

Our main theorem gives that the cube property holds for the class of linear orders under the lexicographical product. In view of the proof, it is natural to ask if we could have established the stronger result, that for all orders $A$ and $X$, if $A^2X \cong X$ then $AX \cong X$. By \ref{proppra}, if it were possible to construct a p.r.a. for \emph{any} order of the form $A^{\omega}$ then the answer would be yes. This raises the subquestion of whether constructing a p.r.a. for $A^{\omega}$ is always possible. 

We show in this section that the answer to both questions is no. In fact, if $A$ is a complete $(\kappa, \lambda)$-order for cardinals $\kappa$ and $\lambda$ of uncountable cofinality, then $A^{\omega}$ does not admit a p.r.a. Furthermore, the converse to Theorem \ref{proppra} holds: if $A^{\omega}$ does not have a p.r.a., then there exists an $X$ such that $A^2X \cong X$ but $AX \not\cong X$. Thus in particular, there is such an $X$ when $A = \omega_1^* + \omega_1$. In the language of the introduction, this says that the (left-sided) weak Schroeder-Bernstein property fails for the class of linear orders. (The right-sided weak Schroeder-Bernstein property also fails, that is, there exist $X, A$ with $XA^2 \cong X$ but $XA \not \cong X$. This is easier to prove, but we will not do so here.)

Given a linear order $X$, a \emph{cut} in $X$ is a pair of intervals $(I, J)$ such that $I \cup J = X$, $I \cap J = \emptyset$, and $I < J$. Thus $I$ is an initial segment of $X$, and $J$ is a final segment. A cut $(I, J)$ is called a \emph{gap} if $I$ has no maximal element, and $J$ has no minimal element. 

The \emph{Dedekind completion} of $X$, denoted $\overline{X}$, is the order obtained from $X$ by filling every gap $(I, J)$ with a single point $x_{(I, J)}$. Viewing $X$ as a suborder of $\overline{X}$, we have that $X$ is dense in $\overline{X}$, and in particular the cofinality (respectively, coinitiality) of $\overline{X}$ coincides with that of $X$. The Dedekind completion $\overline{X}$ is always a complete linear order, and if $X$ is complete to begin with, then $X = \overline{X}$. 

Any automorphism $f: X \rightarrow X$ can be extended uniquely to an automorphism $\overline{f}: \overline{X} \rightarrow \overline{X}$. For, if $(I, J)$ is a gap in $X$, then $(f[I], f[J])$ must also be a gap. The automorphism $\overline{f}$ is defined by setting $\overline{f}(x_{(I, J)}) = x_{(f[I], f[J])}$ for every gap $(I, J)$, and $\overline{f}(x) = f(x)$ for every $x \in X$.

A subset $C \subseteq X$ is called \emph{closed} if every monotone sequence in $C$ is either unbounded in $X$ or converges to some point in $C$. For $C$ to be closed it is necessary that $C$ is complete in the order inherited from $X$. However, completeness is not sufficient for closure. For example, $\{\frac{1}{n}: n \geq 1\} \cup \{0\}$ is closed as a subset of $\mathbb{R}$, whereas $\{\frac{1}{n}: n \geq 1\} \cup \{-1\}$ is not. Note that if $X$ is not complete, then $X$ is not closed as a subset of itself. 

A subset $C \subseteq X$ is called \emph{left-unbounded} if for every $x \in X$ there exists $c_0 \in C$ such that $c_0 \leq x$, and \emph{right-unbounded} if for every $x$ one can find $c_1 \in C$ with $c_1 \geq x$. If $C$ is unbounded in both directions, we simply say $C$ is \emph{unbounded}. If $C$ is closed and left-unbounded, then $C$ is called a \emph{left club}, and $C$ is a \emph{right club} if it is closed and right-unbounded. If $C$ is both a left and right club, then we say simply that $C$ is a \emph{club}. Similarly, if $I \subseteq X$ is an interval, then viewing $I$ as an order in itself we may speak of a right club in $I$, left club in $I$, and club in $I$. 

It is straightforward to check that if $X$ has uncountable cofinality, then the intersection of two right clubs is a right club, and if $X$ has uncountable coinitiality, then the intersection of two left clubs is a left club. Hence if $X$ has both uncountable cofinality and coinitiality, the intersection of two clubs is a club, the intersection of a club with a right club is a right club, and the intersection of a club with a left club is a left club. 

An order $X$ may not contain any club suborders, but its completion $\overline{X}$ always contains at least one club, namely $\overline{X}$ itself. 

Suppose that $X$ is a $(\kappa, \lambda)$-order, and both $\kappa$ and $\lambda$ have uncountable cofinality (so that $X$ has uncountable cofinality and coinitiality). Let $f$ be an automorphism of $X$, and $\overline{f}$ its extension to $\overline{X}$. Let us check that $C$, the set of fixed points of $\overline{f}$, is a club in $\overline{X}$. It is clear that $C$ is closed. To see that it is unbounded, fix $x \in \overline{X}$. If $\overline{f}(x) = x$, then $x \in C$ and there is nothing to check. So suppose $x \not \in C$. Then either $\overline{f}(x) > x$ or $\overline{f}(x) < x$. Assume without loss of generality that we are in the former case. Then the positive iterates of $x$ under $\overline{f}$ form an increasing sequence, that is, we have 
\[
x < \overline{f}(x) < \overline{f}^2(x) < \ldots
\]
Since $\overline{X}$ has uncountable cofinality, this sequence is bounded, and hence converges (by completeness) to some point $b$. It is easy to see that $b$ must be a fixed point of $\overline{f}$. Symmetrically, since the coinitiality of $\overline{X}$ is also uncountable, the negative iterates of $x$ converge to some $a$, and this $a$ must be fixed by $\overline{f}$. We have found $a, b \in C$ with $a < x < b$, and so $C$ is unbounded as claimed. 

Until further notice, let $A$ denote the order $\omega_1^* + \omega_1$. If $\alpha \in \omega_1$ is an ordinal, we denote the corresponding element in $\omega_1^*$ by $-\alpha$. We identify the $0$ of $\omega_1$ with the $0$ of $\omega_1^*$. Thus
\[
A = \ldots < -\alpha < \ldots < -1 < 0 < 1 < \ldots < \alpha < \ldots
\]

\theoremstyle{definition}
\newtheorem{nopra}[theorem]{Theorem}
\begin{nopra}\label{nopra}
Let $A = \omega_1^* + \omega_1$. Then $A^{\omega}$ does not admit a parity-reversing automorphism. 
\end{nopra}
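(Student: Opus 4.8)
The plan is to work in the Dedekind completion $\overline{A^{\omega}}$ and play off the eventual periodicity that a parity-reversing automorphism forces on its fixed points against the abundance of fixed points that completeness and uncountable cofinality/coinitiality force on \emph{any} automorphism. Suppose, toward a contradiction, that $f$ is a parity-reversing automorphism of $A^{\omega}$. First, if $f(u)=u$ then $u\in[au]_2$, so by Proposition \ref{period} the sequence $u$ is eventually periodic of odd period, and in particular has a constant tail. Second, since $A=\omega_1^{*}+\omega_1$ is complete with cofinality and coinitiality $\omega_1$, so is $A^{\omega}$ (via its first coordinate) and so is $\overline{A^{\omega}}$. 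Here one should record the local geometry of $\overline{A^{\omega}}$: because $A$ is complete, every gap of $A^{\omega}$ comes from some coordinate landing on a point of $A$ with no immediate predecessor (a positive limit ordinal) or no immediate successor (a point $-\alpha$ with $\alpha$ a limit ordinal), and a short case check gives that each side of such a gap has coinitiality or cofinality in $\{\omega,\omega_1\}$; while every genuine point of $A^{\omega}$, since $A$ has neither a least nor a greatest element, is approached from each side by an $\omega$-sequence. The feature that makes $\omega_1^{*}+\omega_1$ (rather than, say, a $\mathbb{Z}$-order) the right base is that no ordinal below $\omega_1$ has uncountable cofinality; consequently every gap produced by a limit-ordinal coordinate is \emph{mismatched}: one side $\omega$-cofinal and the other $\omega_1$-coinitial.

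Now extend $f$ to $\overline f$ on $\overline{A^{\omega}}$ and let $C$ be the set of fixed points of $\overline f$; by the computation in the paragraphs just before this theorem, $C$ is a club in $\overline{A^{\omega}}$. The next step is to show that $C$ meets a mismatched gap. The genuine fixed points of $\overline f$ all lie in the set $P$ of eventually periodic sequences of odd period, hence have constant tails; in particular $P$ is dense but not closed, since a strictly increasing $\omega$-sequence of constant sequences $\overline{c_n}$ with $c_n\uparrow c$ (a positive limit ordinal) converges in $\overline{A^{\omega}}$ to the mismatched gap just below the block $\{v\in A^{\omega}:v_0=c\}$. Concretely: build a continuous increasing $\omega_1$-sequence $\langle q_\xi:\xi<\omega_1\rangle$ inside $C$, cofinal in $\overline{A^{\omega}}$ (possible since $C$ is closed, unbounded, and $\overline{A^{\omega}}$ is complete), look at the nondecreasing sequence of their first coordinates (extending this notion to gaps in the obvious way), which is cofinal in the $\omega_1$-part of $A$, and choose a limit $\xi$ (necessarily of cofinality $\omega$) at which this sequence strictly increases, with supremum a positive limit ordinal $c$; then $\{q_\eta:\eta<\xi\}$ is cofinal in $\{v:v_0<c\}$, so $q_\xi=\sup_{\eta<\xi}q_\eta$ is exactly the mismatched gap below $\{v_0=c\}$. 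Fix such a gap $p=(I,J)\in C$, so $f[I]=I$ and $f[J]=J$.

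It remains to contradict the existence of a parity-reversing $f$ fixing a mismatched gap. After normalizing one may take $p$ to lie just below a block $B=\{v\in A^{\omega}:v\upharpoonright k=r,\ v_k=c\}$ with $r\in A^{<\omega}$ and $c$ a positive limit ordinal; $\overline f(p)=p$ means $f$ preserves the initial segment $I$. The region directly above $p$ is very rigid: it is the ladder of consecutive blocks $D_1<D_2<\cdots$ accumulating at $\overline c$ — these are the intervals singled out in the discussion after Theorem \ref{Aonenonepra} as containing no period-$1$ points — separated by gaps, and the bottom rung $D_1$ is an $(\omega_1^{*}+c)$-indexed stack of copies of $A^{\omega}$, with $\omega_1^{*}+c$ a rigid order. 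Any automorphism fixing $p$, and also the auxiliary periodic point $\overline c$ at the top of the ladder, must preserve each of these pieces; reading the condition $f(v)\in[av]_2$ in the coordinates so induced pushes the constraint down the ladder and into a single copy of $A^{\omega}$, inside which the same analysis recurs. Iterating this descent $\omega$ (or $\omega_1$) many times, one forces a sequence to be sent into two disjoint $2$-tail-equivalence classes at once — the desired contradiction. Making this last paragraph rigorous is the step I expect to be the real obstacle: one must verify order-definability of the ladder and its rungs at each stage (so that $\overline f$ genuinely must respect them), arrange that the auxiliary endpoints needed for the descent lie in $C$ (which may require passing to a carefully chosen subclub), and keep track of how the parity condition mutates coordinate by coordinate. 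It is precisely here that the absence of matched gaps ``to retreat into'', an artifact of the base being $\omega_1^{*}+\omega_1$, makes the descent close off in a contradiction rather than in mere self-similarity; granting it, $A^{\omega}$ admits no parity-reversing automorphism.
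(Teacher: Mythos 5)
Your opening matches the paper's: extend $f$ to $\overline{f}$ on $\overline{A^{\omega}}$, note that the fixed points of $\overline{f}$ form a club $C$, and observe via Proposition \ref{period} that a genuine fixed point of a parity-reversing $f$ would have to be eventually periodic of odd period. (Two side errors here: odd period does not mean period $1$, so ``hence have constant tails'' is false; and every block $I_r$ has a gap on each side, not only the blocks at limit-type coordinates, so your description of the gaps of $A^{\omega}$ is also off, though neither slip is what sinks the argument.) The genuine gap is in the second half. Producing one fixed \emph{gap} point $c^-$ just below $\{v : v_0 = c\}$ carries no parity information by itself: $c^-$ is not an element of $A^{\omega}$, and $\overline{f}(c^-)=c^-$ says only that $f$ preserves the initial segment $\{v : v_0 < c\}$, which is entirely compatible with $f$ being parity-reversing as far as anything you have established. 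All the remaining content is deferred to the ``ladder descent,'' which you yourself flag as unproved, and its first step already fails: nothing forces $\overline{f}$ to fix the auxiliary point $\overline{c}$ at the top of the ladder (a club argument yields many fixed points but cannot prescribe a particular one), and without that the claimed preservation of the rungs $D_k$, and hence the coordinate-by-coordinate ``pushing down'' of the condition $f(v)\in[av]_2$, never gets started.

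What is actually needed, and what the paper does, is to iterate the club argument inside nested complete $(\omega_1,\omega_1)$-intervals while \emph{alternating direction}: first choose a fixed point $\alpha_0^-$ in the right club $\{\alpha^- : \alpha<\omega_1\}$; the final segment above it is again a complete $(\omega_1,\omega_1)$-order, so its fixed points form a club in it, and one chooses a fixed point of the form $(\alpha_0,-\alpha_1)^+$ from the left club of right endpoints of the blocks $I_{(\alpha_0,-\alpha)}$; then $(\alpha_0,-\alpha_1,\alpha_2)^-$ from a right club in the next interval, and so on. These gap fixed points converge to the genuine point $u=(\alpha_0,-\alpha_1,\alpha_2,-\alpha_3,\ldots)\in A^{\omega}$, which is fixed by $f$ because $C$ is closed, and which (with the $\alpha_i$ chosen nonzero, as the unboundedness of the clubs permits) cannot be eventually periodic of odd period, since its entries alternate between the $\omega_1$- and $\omega_1^*$-parts of $A$; by Proposition \ref{period} this contradicts $f(u)\in[au]_2$. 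It is this alternating descent to an honest non-periodic fixed point of $f$ inside $A^{\omega}$ --- not any rigidity of the region above a single mismatched gap --- that your proposal is missing.
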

\begin{proof}
For every finite sequence $r \in A^{<\omega}$, let $I_r$ denote the interval in $A^{\omega}$ consisting of sequences beginning with $r$. We confuse sequences of length 1 with elements of $A$, so that if $a \in A$ then $I_a$ means the interval of points with first entry $a$. Note that $I_r$ is isomorphic to $A^{\omega}$ for every $r$, and in particular has neither a left nor right endpoint. 

While $A$ is complete, $A^{\omega}$ is not, since for every $r$ the interval $I_r$ has a gap to its immediate left and right. To see this, let $J$ be the interval consisting of points in $I_r$ and above, and let $I$ be the complement of $J$. Then $J$ has no minimum, since $I_r$ does not. On the other side, $I$ has no maximum, since any $u \in I$ must begin with a finite sequence $s$, of the same length as $r$, but with some entry $s_i < r_i$. Writing $u = su'$, pick a sequence $u'' > u'$, which is always possible since $A^{\omega}$ has no top point. Let $v = su''$. Then $v > u$ but $v$ is still in $I$ since it lies below $I_r$. Thus the cut $(I, J)$ is in fact a gap, and it lies to the immediate left of $I_r$. Symmetrically (since $A^{\omega}$ has no bottom point), $I_r$ has a gap to the right. 

For every $r \in A^{<\omega}$, let $r^-$ and $r^+$ denote the elements of $\overline{A^{\omega}}$ that fill the gaps to the left and right of $I_r$ respectively. (These points are not pairwise distinct: if $a, b \in A$ and $b = a+1$ then for any $r \in A^{<\omega}$ we have $ra^+ = rb^-$.)

Let $f: A^{\omega} \rightarrow A^{\omega}$ be an automorphism of $A^{\omega}$, and $\overline{f}$ its extension to $\overline{A^{\omega}}$. Let $C \subseteq \overline{A^{\omega}}$ denote the club of fixed points of $\overline{f}$. 

We shall show that $f$ has a fixed point of the form $u = (\alpha_0, -\alpha_1, \alpha_2, -\alpha_3, \ldots)$ for some collection of ordinals $\alpha_i \in \omega_1$. Since such a $u$ cannot be periodic of odd period, we have that $[u]_2 \neq [au]_2$, and thus $f$ is not parity-reversing. 

Consider the $\omega_1$-length increasing sequence of intervals 
\[
I_0 < I_1 < \ldots < I_{\alpha} < \ldots
\]
If $\alpha$ is a limit ordinal, then there are no points in $A^{\omega}$ that lie below $I_{\alpha}$ but above each $I_{\beta}$ for $\beta < \alpha$. This means that the $\omega_1$-sequence of left endpoints
\[
0^- < 1^- < \ldots < \alpha^- < \ldots
\]
is closed in $\overline{A^{\omega}}$. Since this sequence is right unbounded, it forms a right club in $\overline{A^{\omega}}$. Denote this right club by $D_0$.

Then $D_0 \cap C$ is a right club in $\overline{A^{\omega}}$. Fix $\alpha_0^- \in D_0 \cap C$. Let $J_1$ denote the interval of points in $\overline{A^{\omega}}$ lying strictly above $\alpha_0^-$. Then since $\alpha_0^-$ is a fixed point of $\overline{f}$, we have that $\overline{f}$ restricted to $J_1$ is an automorphism of $J_1$. The coinitiality of $J_1$ is $\omega_1$ since $I_{\alpha_0}$ is an initial segment of $J_1$, and its cofinality is also $\omega_1$, since it is a final segment of $A^{\omega}$. Hence $J_1$ is a complete $(\omega_1, \omega_1)$-order. It follows that the set of fixed points of $\overline{f}$ in this interval, $J_1 \cap C$, is a club in $J_1$. 

Now consider the descending sequence of intervals
\[ 
I_{(\alpha_0, 0)} > I_{(\alpha_0, -1)} > \ldots > I_{(\alpha_0, -\alpha)} > \ldots
\]
and the corresponding closed sequence of \emph{right} endpoints
\[
(\alpha_0, 0)^+ > (\alpha_0, -1)^+ > \ldots > (\alpha_0, -\alpha)^+ > \ldots
\]
Denote this sequence by $D_1$. Then $D_1$ is left unbounded in $J_1$, and hence a left club in $J_1$. Thus $D_1 \cap C$ is a left club in $J_1$, and we may fix $(\alpha_0, -\alpha_1)^+ \in D_1 \cap C$. 

At the next stage we define $J_2$ to be the subinterval of $J_1$ consisting of points that lie strictly below $(\alpha_0, -\alpha_1)^+$. Then $\overline{f}$ restricted to $J_2$ is an automorphism of $J_2$. This interval ends with $I_{(\alpha_0, -\alpha_1)}$ and hence has cofinality $\omega_1$. Since it also has coinitiality $\omega_1$, we have again that the set of fixed points in this interval, $J_2 \cap C$, is a club in $J_2$. Hence there must be a fixed point $(\alpha_0, -\alpha_1, \alpha_2)^-$ in the right club sequence 
\[
(\alpha_0, -\alpha_1, 0)^- < (\alpha_0, -\alpha_1, 1)^- < \ldots < (\alpha_0, -\alpha_1, \alpha)^- < \ldots
\]

Continuing in this way, we obtain a sequence of fixed points $(\alpha_0)^-$, $(\alpha_0, -\alpha_1)^+$, $(\alpha_0, -\alpha_1, \alpha_2)^-$, $(\alpha_0, -\alpha_1, \alpha_2, -\alpha_3)^+, \ldots$ While these points all lie outside of $A^{\omega}$, they converge (in the obvious sense) to the point $u = (\alpha_0, -\alpha_1, \alpha_2, \ldots) \in A^{\omega}$. Since the set of fixed points of $\overline{f}$ is closed, it must then be that $u$ is fixed by $\overline{f}$. Since $\overline{f}$ agrees with $f$ on $A^{\omega}$, we have in fact that $u$ is a fixed point of $f$. As observed already, it follows that $f$ is not parity-reversing.
\end{proof}

It is easy to generalize the proof to get that if $A$ is any complete $(\kappa, \lambda)$-order, where $\kappa$ and $\lambda$ have uncountable cofinality, then $A^{\omega}$ does not have a parity-reversing automorphism.

We will now prove the converse to \ref{proppra}. Recall that a linear order is called \emph{scattered} if it does not contain an infinite, dense suborder. In particular, every ordinal $\alpha$, considered as a linear order, is scattered. 

\theoremstyle{definition}
\newtheorem{AXnotX}[theorem]{Theorem}
\begin{AXnotX}\label{AXnotX}
Suppose that $A^{\omega}$ does not have a parity-reversing automorphism. Then there exists an order $X$ such that $A^2 X \cong X$ but $AX \not \cong X$. 
\end{AXnotX}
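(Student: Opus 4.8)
By Theorem \ref{AsquaredX}, every order $X$ with $A^2X\cong X$ is isomorphic to one of the form $A^{\omega}(I_{[u]_2})$, and by Proposition \ref{switching} the corresponding $AX$ is isomorphic to $A^{\omega}(J_{[u]_2})$ with $J_{[u]_2}=I_{[au]_2}$. So the whole task is to \emph{choose} the orders $I_{[u]_2}$ so cleverly that $A^{\omega}(I_{[u]_2})\not\cong A^{\omega}(I_{[au]_2})$. First note two free reductions: by Theorem \ref{Abothpra}, $A$ cannot have both a left and right endpoint (else $A^{\omega}$ would have a p.r.a.), and $A$ must have at least two points (else $A^{\omega}$ is a single point, which has the identity as a p.r.a.). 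Consequently $A^{\omega}$ is a \emph{densely ordered} set with at least two points --- it has no consecutive pair of elements, precisely because $A$ fails to have both endpoints --- and hence $A^{\omega}$ is infinite and not scattered.

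\textbf{The construction.} Assign to each $\sim_2$-class $C$ a nonzero ordinal $\gamma_C$, picking the assignment $C\mapsto\gamma_C$ to be injective; this is possible since there is only a set of $\sim_2$-classes but a proper class of ordinals, and distinct ordinals are pairwise non-isomorphic as linear orders. Set $X=A^{\omega}(I_{[u]_2})$ with $I_{[u]_2}=\gamma_{[u]_2}$. Then $A^2X\cong X$ by Theorem \ref{AsquaredX}, and by Proposition \ref{switching}, $AX\cong A^{\omega}(J_{[u]_2})$ where every $J_{[u]_2}=\gamma_{[au]_2}$ is again a nonzero ordinal. (One checks that $C\mapsto[aC]_2$ is an involution on $\sim_2$-classes, since $aau\sim_2 u$, so the $J$'s are just the $\gamma$'s reindexed, in particular still pairwise non-isomorphic.)

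\textbf{Recovering the blocks.} The key step is the observation that in \emph{any} order of the form $A^{\omega}(\gamma_u)$ with each $\gamma_u$ a nonzero ordinal, the maximal scattered intervals are exactly the ``blocks'', i.e. the intervals $\{(u,\cdot)\}$ of order type $\gamma_u$. Each such block is scattered, being an ordinal. Conversely, if an interval $M$ meets two distinct blocks, then $M$ contains all the blocks lying strictly between them in $A^{\omega}$; since $A^{\omega}$ is dense this is an infinite, densely ordered family of blocks, so $M$ contains an infinite dense suborder and is not scattered. A routine fact (the union of two overlapping scattered intervals is again scattered, being an ordered sum of two scattered pieces) then shows any maximal scattered interval meets only one block and is contained in it, hence equals it. Because the condensation of an order into its maximal scattered intervals is preserved by isomorphisms, any isomorphism $F:A^{\omega}(I_{[u]_2})\to A^{\omega}(J_{[u]_2})$ must carry blocks bijectively onto blocks.

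\textbf{Conclusion and the main obstacle.} Suppose, for contradiction, that $AX\cong X$, and fix an isomorphism $F:A^{\omega}(I_{[u]_2})\to A^{\omega}(J_{[u]_2})$. By the previous paragraph $F$ maps blocks to blocks, and since the blocks of each side are indexed in an order-preserving way by $A^{\omega}$, $F$ induces an order automorphism $g$ of $A^{\omega}$ with $F[\,\mathrm{block}(u)\,]=\mathrm{block}(g(u))$ for every $u$. Comparing order types of matched blocks gives $I_{[u]_2}\cong J_{[g(u)]_2}=I_{[ag(u)]_2}$ for every $a\in A$; as the $I_C$ are pairwise non-isomorphic this forces $[u]_2=[ag(u)]_2$, i.e. $u\sim_2 ag(u)$, and prepending $a$ yields $g(u)\sim_2 au$, that is, $g(u)\in[au]_2$. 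Thus $g$ is a parity-reversing automorphism of $A^{\omega}$ in the sense of Definition \ref{pra}, contradicting the hypothesis. Hence $A^2X\cong X$ but $AX\not\cong X$, as required. The heart of the proof, and the only delicate point, is the ``recovering the blocks'' lemma: making the block orders \emph{canonically readable off} from the isomorphism type of $X$ is exactly what converts an abstract isomorphism $AX\cong X$ into a concrete automorphism of $A^{\omega}$, and it is the density of $A^{\omega}$ together with the scatteredness of ordinals that makes the maximal-scattered-interval condensation perform this task.
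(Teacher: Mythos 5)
Your proof is correct and follows essentially the same route as the paper: replace each $\sim_2$-class of $A^{\omega}$ by a distinct ordinal, use the density of $A^{\omega}$ (forced by the absence of both endpoints) together with the scatteredness of ordinals to show any isomorphism $X\to AX$ must carry blocks to blocks, and read off the induced map on $A^{\omega}$ as a parity-reversing automorphism. The only cosmetic differences are that you phrase the block-recovery step via the maximal-scattered-interval condensation where the paper argues by a direct two-sided containment, and you insist on nonzero ordinals so that every block is nonempty.
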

\begin{proof}
Since $A^{\omega}$ does not have a p.r.a., it must be that $A$ either has no endpoints, or only a single endpoint. In either case $A^{\omega}$ is dense. Hence any interval in $A^{\omega}$, considered as a linear order itself, is dense. Suppose that $A^{\omega}(I_u)$ is any replacement of $A^{\omega}$ such that for densely many $u$ we have $I_u \neq \emptyset$. Then since $A^{\omega}$ is dense, for any interval $I \subseteq A^{\omega}(I_u)$, we have that either $I \subseteq I_u$ for some $u$, or $I$ contains a dense suborder. (The ``or" here is non-exclusive: both conditions may hold, but not neither.) In the latter case, $I$ is by definition non-scattered. 

Let $\kappa$ denote the number of $\sim_2$-equivalence classes in $A^{\omega}$. Enumerate these classes as $\{C_{\alpha}: \alpha \in \kappa \}$. Let $X$ be the replacement of $A^{\omega}$ obtained by replacing every point in the $\alpha$th class with a copy of $\alpha$, that is, $X = A^{\omega}(I_{[u]_2})$ where $I_{[u]_2} = \alpha$ if $[u]_2 = C_{\alpha}$. (It is inessential, though convenient, that we replace the points with ordinals. Any $\kappa$-sized collection of pairwise non-isomorphic scattered orders also works to define the $I_{[u]_2}$.)

By construction we have that $A^2X \cong X$. We know from \ref{switching} that $AX \cong A^{\omega}(J_{[u]_2})$ where $J_{[u]_2} = I_{[au]_2}$ for all $u$ and $a$. Suppose toward a contradiction that there is an isomorphism $f$ of $X$ with $AX$, that is, of $A^{\omega}(I_{[u]_2})$ with $A^{\omega}(J_{[u]_2})$. 

For a fixed $u$, the order type of the interval $I_u$ is an ordinal $\alpha$, and in particular is scattered. Its image $f[I_u]$ must be an interval of type $\alpha$ in $A^{\omega}(J_{[u]_2})$, and so from our observation above, it must be that $f[I_u] \subseteq J_v$ for some $v$. By the same argument, $f^{-1}[J_v]$ must be contained in $I_w$ for some $w$. But then since $f^{-1}[J_v]$ contains $I_u$, it must be that in fact $v = w$, and $f[I_u] = J_v$. But then $J_v = \alpha$, and hence it must be that $v \in [au]_2$. Moreover, $f$ must be the identity on $I_u$, since the identity is the only isomorphism of $\alpha$ with itself. 

Thus the isomorphism $f$ may be factored as $(g, \textrm{id})$, where $g: A^{\omega} \rightarrow A^{\omega}$ is a parity-reversing automorphism. But no such $g$ exists, by hypothesis. Hence there is no isomorphism between $X$ and $AX$. 
\end{proof}

\section{Related Problems}

In \emph{Cardinal and Ordinal Numbers}, Sierpi\'nski poses several other questions concerning the multiplication of linear orders aside from the cube problem. On page 232 he writes, ``We do not know so far any example of two types $\varphi$ and $\psi$, such that $\varphi^2 = \psi^2$ but $\varphi^3 \neq \psi^3$, or types $\gamma$ and $\delta$ such that $\gamma^2 \neq \delta^2$ but $\gamma^3 = \delta^3$." Later, on page 251, ``We do not know whether there exist two different denumerable order types which are left-hand divisors of each other. Neither do we know whether there exist two different order types which are both left-hand and right-hand divisors of each other."

Since Sierpi\'nski ordered products anti-lexicographically, ``left-hand divisor" for him means ``right-hand divisor" in the convention of this paper. Writing them out using our convention, his questions are,
\begin{enumerate}
\item[1.] Do there exist orders $X$ and $Y$ such that $X^2 \cong Y^2$ but $X^3 \not\cong Y^3$?
\item[2.] Do there exist orders $X$ and $Y$ such that $X^2 \not\cong Y^2$, but $X^3 \cong Y^3$?
\item[3.] Do there exist \emph{countable} orders $X$ and $Y$ such that $X \not \cong Y$ but for some orders $A, B$ we have $AY \cong X$ and $BX \cong Y$ ?
\item[4.] Do there exist orders $X$ and $Y$ such that $X \not\cong Y$ but for some orders $A_0, A_1, B_0, B_1$ we have $ A_0Y \cong YA_1 \cong X$ and $B_0X \cong XB_1 \cong Y$?
\end{enumerate}

In comparison with the questions from the introduction, these questions are phrased negatively, asking for counterexamples to the corresponding properties. 

Sierpi\'nski was aware of counterexamples to the unique square root property for linear orders, that is, of non-isomorphic orders $X$ and $Y$ such that $X^2 \cong Y^2$. These examples are due to Morel; see \cite{Morel}. Question 2 is the natural generalization of the unique square root problem, and could be called the unique cube root problem. More generally, one may ask if there exist orders $X$ and $Y$ such that $X^n \cong Y^n$ but $X^k \not\cong Y^k$ for $k<n$. 

Question 1 is motivated by the fact that Morel's examples $X, Y$ of non-isomorphic orders with isomorphic squares have the property that $X^n \cong X$ for all $n \geq 1$ and $Y^n \cong X$ for all $n > 1$. In particular, not only is it the case that $X^2 \cong Y^2$  but actually that $X^n \cong Y^n$ for all $n>1$. Question 1 asks whether this kind of collapsing is necessary, or if it is possible that two orders have isomorphic squares but non-isomorphic cubes.

Both Questions 1 and 2 are related to a generalization of the cube problem. Suppose it were possible to find an order $X$ such that $X^5 \cong X$ but the powers $X, X^2, X^3$, and $X^4$ were pairwise non-isomorphic. Then $X$ and $Y=X^3$ would give a positive answer to Question 2. Similarly, if it were possible to find an $X$ isomorphic to $X^7$ but whose intermediate powers were pairwise non-isomorphic, then $X$ and $Y=X^3$ would give a positive answer to Question 1. By our main theorem, there are no such $X$, but it may still be that these two questions have positive answers. These questions are, to the author's knowledge, still open.

Question 4 might be called the \emph{two-sided Schroeder-Bernstein problem} for the class $(LO, \times)$. It is a sensible problem to ask given that the lexicographical product is non-commutative and that there exist examples witnessing the failure of left-sided Schroeder-Bernstein property and right-sided Schroeder-Bernstein property for linear orders. It is closely related to the cube problem, in the sense that if there existed an $X$ isomorphic to its cube but not its square, then $X$ and $Y=X^2$ would give a positive answer. There is no such $X$, but it turns out that Question 4 still has a positive answer. The proof of this will appear separately. This gives further evidence that the cube property for $(LO, \times)$ is ``close" to being false. 

Question 3 is the left-sided Schroeder-Bernstein problem for \emph{countable} linear orders. Sierpi\'nski was aware of \emph{uncountable} orders $X, Y, A, B$ satisfying the relations in the problem. In Section 6 we constructed such orders with $A = B$. From the discussion following \ref{cntblwidth}, we know that in the case when $A=B$, any orders $X$ and $Y$ satisfying the relations of Question 3 \emph{must} be uncountable. We will now strengthen this result, and show that Question 3 has a negative answer.

Suppose that $X, Y, A, B$ are countable orders, and $AY \cong X$ and $BX \cong Y$. We prove $X \cong Y$. Notice first that the hypotheses give $ABX \cong X$. Let $C = AB$. There are three cases. If $C$ has both endpoints, then both $A$ and $B$ must also have both endpoints. But then $X$, which is isomorphic to $AY$, contains an initial copy of $Y$, and $Y$, which is isomorphic to $BX$, contains a final copy of $X$. Hence $X \cong Y$ by Lindenbaum's theorem. If $C$ has neither endpoint, then by the proof of \ref{cntblnoend}, since $CX \cong X$ we have that $X$ must be of the form $\mathbb{Q}(I_i)$. Thus $X$ is invariant under left multiplication by any countable order. In particular, $BX \cong X$, that is, $Y \cong X$. Finally, suppose $C$ has a single endpoint. Without loss of generality, assume it is the left endpoint. Then both $A$ and $B$ have a left endpoint (and at least one of them is missing the right endpoint). By the adaption of the proof of \ref{cntblnoend} to the left endpoint case, $X$ must be of the form $Q(I_i)$, where $Q = \mathbb{Q} \cap [0, 1)$, and where the order replacing the left endpoint $0$ also has a left endpoint. It can be shown that such an order is invariant under left multiplication by any countable order with a left endpoint. In particular, $BX \cong X$, that is, $Y \cong X$.

All of Sierpi\'nski's questions are instances of a much more general question. If we distinguish the structures in a given class $\mathfrak{C}$ only up to isomorphism type, then $(\mathfrak{C}, \times)$ may be viewed as a (possibly very large) semigroup, where the semigroup operation is given by the product. For a given semigroup $(S, \cdot)$, we say that $S$ can be \emph{represented} in $\mathfrak{C}$ if there is a map $\iota: S \rightarrow \mathfrak{C}$ such that $\iota(a\cdot b) \cong \iota(a) \times \iota(b)$, and if $a \neq b$ then $\iota(a) \not\cong \iota(b)$.

The failure of the cube property for a given class $\mathfrak{C}$ is equivalent to the statement that the group $\mathbb{Z}_2$ can be represented in $\mathfrak{C}$. Thus our main theorem gives that $\mathbb{Z}_2$, and more generally $\mathbb{Z}_n$, cannot be represented in $(LO, \times)$. 

\begin{enumerate}
\item[5.] Which semigroups can be represented in $(LO, \times)$?
\item[6.] Can any nontrivial group be represented in $(LO, \times)$?
\end{enumerate}

Question 6 is of course a subquestion of Question 5. By our results, if Question 6 has a positive answer, then any non-identity element in the witnessing group must be of infinite order.

Questions 1, 2, and 4 all concern relations that can be realized in certain semigroups. Thus a complete answer to Question 5 would yield answers to all of these questions.

\end{document}